\begin{document}
\newcommand{\M}{{\mathcal M}}
\newcommand{\loc}{{\mathrm{loc}}}
\newcommand{\core}{C_0^{\infty}(\Omega)}
\newcommand{\sob}{W^{1,p}(\Omega)}
\newcommand{\sobloc}{W^{1,p}_{\mathrm{loc}}(\Omega)}
\newcommand{\merhav}{{\mathcal D}^{1,p}}
\newcommand{\be}{\begin{equation}}
\newcommand{\ee}{\end{equation}}
\newcommand{\mysection}[1]{\section{#1}\setcounter{equation}{0}}
%%%%%%%%%%%%
\newcommand{\laplace}{\Delta}%{\triangle}
\newcommand{\pl}{\laplace_p}
\newcommand{\grad}{\nabla}%{\bigtriangledown}
\newcommand{\pd}{\partial}
\newcommand{\bo}{\pd}
\newcommand{\csub}{\subset \subset}
\newcommand{\sm}{\setminus}
\newcommand{\ssm}{:}
\newcommand{\diver}{\mathrm{div}\,}
%%%%%%%%%%%%%%%
\newcommand{\bea}{\begin{eqnarray}}
\newcommand{\eea}{\end{eqnarray}}
\newcommand{\bean}{\begin{eqnarray*}}
\newcommand{\eean}{\end{eqnarray*}}
\newcommand{\thkl}{\rule[-.5mm]{.3mm}{3mm}}
%%%%%%%%%%%%%%%%%%%%%%%%%%%
\newcommand{\cw}{\stackrel{\rightharpoonup}{\rightharpoonup}}
\newcommand{\id}{\operatorname{id}}
\newcommand{\supp}{\operatorname{supp}}
\newcommand{\wlim}{\mbox{ w-lim }}
\newcommand{\mymu}{{x_N^{-p_*}}}
\newcommand{\R}{{\mathbb R}}
\newcommand{\N}{{\mathbb N}}
\newcommand{\Z}{{\mathbb Z}}
\newcommand{\Q}{{\mathbb Q}}
\newcommand{\abs}[1]{\lvert#1\rvert}
%%%%%%%%%%%
\newtheorem{theorem}{Theorem}[section]
\newtheorem{corollary}[theorem]{Corollary}
\newtheorem{lemma}[theorem]{Lemma}
\newtheorem{notation}[theorem]{Notation}
\newtheorem{definition}[theorem]{Definition}
\newtheorem{remark}[theorem]{Remark}
\newtheorem{proposition}[theorem]{Proposition}
\newtheorem{assertion}[theorem]{Assertion}
\newtheorem{problem}[theorem]{Problem}
%%%%%%%%%%%%%%%%%%
\newtheorem{conjecture}[theorem]{Conjecture}
\newtheorem{question}[theorem]{Question}
\newtheorem{example}[theorem]{Example}
\newtheorem{Thm}[theorem]{Theorem}
\newtheorem{Lem}[theorem]{Lemma}
\newtheorem{Pro}[theorem]{Proposition}
\newtheorem{Def}[theorem]{Definition}
\newtheorem{Exa}[theorem]{Example}
\newtheorem{Exs}[theorem]{Examples}
\newtheorem{Rems}[theorem]{Remarks}
\newtheorem{Rem}[theorem]{Remark}

\newtheorem{Cor}[theorem]{Corollary}
\newtheorem{Conj}[theorem]{Conjecture}
\newtheorem{Prob}[theorem]{Problem}
\newtheorem{Ques}[theorem]{Question}
\newtheorem*{corollary*}{Corollary}
\newtheorem*{theorem*}{Theorem}
\newcommand{\pf}{\noindent \mbox{{\bf Proof}: }}

%%%%%%%%%%%%%%%%%%
%\newenvironment{proof}{{\bf Proof.}}{\hfill $\bowtie$\vskip4mm}

\renewcommand{\theequation}{\thesection.\arabic{equation}}
\catcode`@=11 \@addtoreset{equation}{section} \catcode`@=12
%%%%%%%%%%%%%%%%%%%%%%
\newcommand{\Real}{\mathbb{R}}
\newcommand{\real}{\mathbb{R}}
\newcommand{\Nat}{\mathbb{N}}
\newcommand{\ZZ}{\mathbb{Z}}
\newcommand{\CC}{\mathbb{C}}
\newcommand{\Pess}{\opname{Pess}}
\newcommand{\Proof}{\mbox{\noindent {\bf Proof} \hspace{2mm}}}
\newcommand{\mbinom}[2]{\left (\!\!{\renewcommand{\arraystretch}{0.5}
\mbox{$\begin{array}[c]{c}  #1\\ #2  \end{array}$}}\!\! \right )}
\newcommand{\brang}[1]{\langle #1 \rangle}
\newcommand{\vstrut}[1]{\rule{0mm}{#1mm}}
\newcommand{\rec}[1]{\frac{1}{#1}}
\newcommand{\set}[1]{\{#1\}}
\newcommand{\dist}[2]{$\mbox{\rm dist}\,(#1,#2)$}
\newcommand{\opname}[1]{\mbox{\rm #1}\,}
\newcommand{\mb}[1]{\;\mbox{ #1 }\;}
\newcommand{\undersym}[2]
 {{\renewcommand{\arraystretch}{0.5}  \mbox{$\begin{array}[t]{c}
 #1\\ #2  \end{array}$}}}
\newlength{\wex}  \newlength{\hex}
\newcommand{\understack}[3]{%
 \settowidth{\wex}{\mbox{$#3$}} \settoheight{\hex}{\mbox{$#1$}}
 \hspace{\wex}  \raisebox{-1.2\hex}{\makebox[-\wex][c]{$#2$}}
 \makebox[\wex][c]{$#1$}   }%
%%Macros for changing font size in math.
\newcommand{\smit}[1]{\mbox{\small \it #1}}% only for letters, numbers
\newcommand{\lgit}[1]{\mbox{\large \it #1}}% only for letters, numbers
\newcommand{\scts}[1]{\scriptstyle #1}
\newcommand{\scss}[1]{\scriptscriptstyle #1}
\newcommand{\txts}[1]{\textstyle #1}
\newcommand{\dsps}[1]{\displaystyle #1}
%%%%%%%%%%%%%%%%%%%%%%%%%%%%%%%%%%%%%%%%%%%%%%
\newcommand{\dx}{\,dx}
\newcommand{\dy}{\,dy}
\newcommand{\dz}{\,\mathrm{d}z}
\newcommand{\dt}{\,dt}
\newcommand{\dr}{\,\mathrm{d}r}
\newcommand{\du}{\,du}
\newcommand{\dv}{\,\mathrm{d}v}
\newcommand{\dV}{\,\mathrm{d}V}
\newcommand{\ds}{\,\mathrm{d}s}
\newcommand{\dS}{\,\mathrm{d}S}
\newcommand{\dk}{\,\mathrm{d}k}

\newcommand{\dphi}{\,\mathrm{d}\phi}
\newcommand{\dtau}{\,\mathrm{d}\tau}
\newcommand{\dxi}{\,\mathrm{d}\xi}
\newcommand{\deta}{\,\mathrm{d}\eta}
\newcommand{\dsigma}{\,\mathrm{d}\sigma}
\newcommand{\dtheta}{\,\mathrm{d}\theta}
\newcommand{\dnu}{\,\mathrm{d}\nu'}

%%%%%%%%%%%%%%%%%%%%%%%%%%%%%%%Macros for Greek letters.
\def\ga{\alpha}     \def\gb{\beta}       \def\gg{\gamma}
\def\gc{\chi}       \def\gd{\delta}      \def\ge{\epsilon}
\def\gth{\theta}                         \def\vge{\varepsilon}
\def\gf{\phi}       \def\vgf{\varphi}    \def\gh{\eta}
\def\gi{\iota}      \def\gk{\nu}      \def\gl{\lambda}
\def\gm{\mu}        \def\gn{\nu'}         \def\gp{\pi}
\def\vgp{\varpi}    \def\gr{\rho}        \def\vgr{\varrho}
\def\gs{\sigma}     \def\vgs{\varsigma}  \def\gt{\tau}
\def\gu{\upsilon}   \def\gv{\vartheta}   \def\gw{\omega}
\def\gx{\xi}        \def\gy{\psi}        \def\gz{\zeta}
\def\Gg{\Gamma}     \def\Gd{\Delta}      \def\Gf{\Phi}
\def\Gth{\Theta}
\def\Gl{\Lambda}    \def\Gs{\Sigma}      \def\Gp{\Pi}
\def\Gw{\Omega}     \def\Gx{\Xi}         \def\Gy{\Psi}
%%%%%%%%%%%%%%%%%%%%%%%%%%%%%%%%%%%%%%%%%%%%%%%%%%%%%%%%%%%%%%%%%%

\renewcommand{\div}{\mathrm{div}}
\newcommand{\red}[1]{{\color{red} #1}}

\newcommand{\cqfd}{\begin{flushright}                  
			 $\Box$
                 \end{flushright}}

%\begin{titlepage}

\title{Heat kernel and Riesz transform of Schr\"{o}dinger operators}

\author{Baptiste Devyver}
\address{Baptiste Devyver, Department of Mathematics, University of British Columbia, Vancouver, B.C.
Canada V6T 1Z2}
\email{devyver@math.ubc.ca ; devyver@tx.technion.ac.il}

\begin{abstract}

The goal of this article is two-fold: in a first part, we prove Gaussian estimates for the heat kernel of Schr\"{o}dinger operators $\Delta+\mathcal{V}$ whose potential $\mathcal{V}$ is ``small at infinity'' in an integral sense. In a second part, we prove sharp boundedness result for the associated Riesz transform with potential $d(\Delta+\mathcal{V})^{-1/2}$. A characterization of $p$-hyperbolicity, which is of independent interest, is also proved.

\medskip \noindent {\sc{2010 MSC.}} Primary 35K; Secondary 31E, 58J.

 \noindent {\sc{Keywords.}} Heat kernel, Schr\"{o}dinger operators, Riesz transform, $p$-hyperbolicity.

\end{abstract}

\maketitle

\section{Introduction}

In this article, we shall be concerned with heat kernel estimates and Riesz transform for Schr\"{o}dinger operators. Starting from the seminal work of T. Coulhon and X.T. Duong \cite{CDu}, numerous results in the literature show that these two topics are tightly intertwined. Loosely speaking, Gaussian estimates for the heat kernel imply boundedness of the Riesz transform on $L^p$ for $p\in (1,2)$, while Gaussian estimates for the heat kernel of the Hodge Laplacian $\vec{\Delta}=dd^*+d^*d$, acting on $1$-forms imply boundedness of the Riesz transform on $L^p$ for $p\in (2,\infty)$ (see \cite{CDu}, \cite{CDu2}). Also, if $\mathcal{V}(x)$ is the minimum of $0$ and the smallest negative eigenvalue of the (symmetric) Ricci curvature operator at $x\in M$, then Gaussian estimates for the Schr\"{o}dinger operator $\Delta+\mathcal{V}$ imply that the heat kernel of $\vec{\Delta}$ has Gaussian estimates, and thus the Riesz transform is bounded on $L^p$ for $p\in (2,\infty)$ (see \cite{CZ}, \cite{D1}).

Gaussian estimates for the heat kernel of a Schr\"{o}dinger operator have been investigated for quite a long time (see e.g. \cite{S}, \cite{DS}, \cite{CZ}, \cite{T1}, \cite{Z1}, \cite{Z2}), both by probabilistic and analytic methods. In \cite{CZ}, it is shown that if $\mathcal{V}$ is negative and small in an integral sense, then the heat kernel of $\Delta+\mathcal{V}$ has Gaussian estimates. A sharper result is proved by Takeda \cite{T2} by probabilistic methods: the smallness of $\mathcal{V}$ {\em at infinity} (in an integral sense) is enough to have the Gaussian estimates for the heat kernel of $\Delta+\mathcal{V}$. We will improve Takeda's result (Theorem \ref{heat_kernel}), and as a consequence we will prove a boundedness result for the Riesz transform whose Ricci curvature is ``small at infinity'' in an integral sense (Corollary \ref{Riesz_small}). Our approach is purely analytic, and owes much to the theory of perturbation of Schr\"{o}dinger operators developed by Y. Pinchover and M. Murata (in particular, \cite{P1}, \cite{P2}, \cite{P5}, \cite{P3} and \cite{M}). 

In the second part of the article, we will be concerned with the boundedness on $L^p$ of a Riesz transform with potential $d(\Delta+\mathcal{V})^{1/2}$. When the potential $\mathcal{V}$ is non-negative and lies in a reverse H\"{o}lder class, the operator $d(\Delta+\mathcal{V})^{1/2}$ have been studied by Shen \cite{Sh}, Auscher and Ben Ali \cite{AB}, and Badr and Ben Ali \cite{BBA}. If $\mathcal{V}$ can take negative values, it is customary to assume that it is ``subcritical'' in some sense, and under this kind of assumptions, boundedness results have been obtained by Assaad \cite{A}, and Assaad and Ouhabaz \cite{A}. Their results are not sharp however, in the sense that they do not prove whether or not the obtained range of boundedness for $d(\Delta+\mathcal{V})^{-1/2}$ is the largest possible. Our aim is to improve their results in order to obtain a {\em sharp} result (Theorem \ref{main4}), for potentials $\mathcal{V}$ taking possibly negative values that, roughly speaking, are ``small at infinity'' in an integral sense. We are motivated by the results of Guillarmou and Hassell \cite{GuH}, who prove a sharp boundedness result for $d(\Delta+\mathcal{V})^{-1/2}$ on an asymptotically conic manifold, for potentials $\mathcal{V}$ decaying at rate $O(|x|^{-3})$ at infinity. Precisely, they prove that (in absence of zero-modes and zero-resonances and in dimension larger that $3$), the operator $d(\Delta+\mathcal{V})^{-1/2}$ is bounded on $L^p$ if and only if $p\in (1,n)$, where $n$ is the dimension of the manifold. Their proof uses the difficult techniques of the $b-$calculus of Melrose and his coauthors, and relies on the very precise description of the geometry of the manifold at infinity, and on the precise decay rate at infinity of the potential $\mathcal{V}$. Thus, there is no hope to extend their proof to more general cases. Moreover, for more general manifolds, it is not clear at all what would be the analog of the exponent $n$ of Guillarmou-Hassell's theorem; in the case of manifolds supporting a global Sobolev inequality and having Euclidean volume growthat infinity, the results of \cite{D1} indicate that the Sobolev exponent should be a good substitute for $n$, but what about more general manifolds? We propose to give an answer to all these questions. We will work in great generality, on manifolds having doubling measure and Gaussian estimates for the heat kernel, which is a natural framewok fo the study of Riesz transforms. Concerning the potential $\mathcal{V}$, we will assume that it decays ``fast enough at infinity'' in a (weak) integral sense. We will prove that under these assumptions, the optimal range of $p$'s for which $d(\Delta+\mathcal{V})^{-1/2}$ is bounded on $L^p$ is $(1,\kappa)$, where $\kappa$ is the {\em parabolic dimension} of $M$. By definition, $\kappa$ is the infimum of all $p$'s such that $M$ is $p$-parabolic. In passing, we prove a novel, very simple characterization of $\kappa$, which is of independent interest. Notice that the recent works \cite{D2}, \cite{Ca}, demonstrate the relevance of the $p$-parabolicity in Riesz transform problems. Our approach also allows us to say something (an alternative $L^p$ inequality), in the case where $d(\Delta+\mathcal{V})^{-1/2}$ is {\em not} bounded on $L^p$ (Theorem \ref{main1}). To the author's knowledge, it is the first time that such an alternative $L^p$ inequality is stated explicitly.

Let us mention that a key point that is used in the proofs of our results is that if $\mathcal{V}$ is ``small at infinity'' in an integral sense, then there exists a function $h$, bounded above and below by positive constant, such that $(\Delta+\mathcal{V})h=0$. That the existence of such a function $h$ has consequences for the Riesz transform $d(\Delta+\mathcal{V})^{-1/2}$ is reminiscent of \cite{GuH}, and also to some extend of \cite{CCH} : indeed, in \cite{CCH}, the unboundedness of the Riesz transform on $L^p$, $p\geq n$ ($p>2$ if $n=2$) on the connected sum $\R^n\sharp\R^n$ of two Euclidean spaces relies on the existence of a non-zero harmonic function with gradient in $L^2$.

\medskip

The plan of this article is as follows: in Section 2, we introduce the setting. In Section 3, we prove a perturbation result for positive solutions of a Schr\"{o}dinger operator. In Section 4, we use the results of Section 3 to prove Gaussian estimates for the heat kernel of Schr\"{o}dinger operators whose potential is ``small at infinity'' in an integral sense. We discuss some consequences for the Riesz transform $d\Delta^{-1/2}$. Section 5 is devoted to prove a characterization of $p$-hyperbolicity, based on volume growth, and which is of independent interest. In Section 6, we introduce a natural scale of weighted $L^p$ spaces, to define an appropriate notion of ``smallness at infinity'' for a potential, in the case where the underlying Riemannian manifold does not satisfy a global Sobolev inequality. Finally, in Section 7, we combine the results of Sections 1-6 to prove --among other results-- a sharp boundedness result for the Riesz transform with potential $d(\Delta+\mathcal{V})^{-1/2}$, for a potential $\mathcal{V}$ ``small at infinity'' in an integral sense.

\section{Preliminaries}

{\em Notation:} for two real functions $g$ and $h$, we will write $g\lesssim h$ if there is a positive constant $C$ such that

$$Cg\leq  h.$$
We also write $g\sim h$ if there exists a positive constant $C$ such that

$$C^{-1}g\leq h\leq Cg.$$

\medskip

{\em Convention: } The Laplacian $\Delta$ is taken with the sign convention that makes it non-negative. For example, on $\R^n$ endowed with the Euclidean metric, $\Delta=-\sum_{k=1}^n\frac{\partial^2}{\partial x_k^2}$.

\bigskip

\subsection{Heat kernel estimates and related inequalities}

 We will always consider $M$ a smooth, complete, connected, non-compact Riemannian manifold.  endowed with a measure $\mu$ which is supposed to be absolutely continuous with respect to the Riemannian measure. We denote by $V(x,r)$ the $\mu$-volume of the geodesic ball $B(x,r)$. The measure $\mu$ is called {\em doubling} if 

\begin{equation}\label{D}\tag{$D$}
V(x,2r)\lesssim V(x,r),\mbox{ for }\mu-\mbox{a.e. }x\in M,\,\forall r>0.
\end{equation}
As a consequence of \eqref{D}, there exist two exponents $\nu$ and $\nu'$ such that

\begin{equation}\label{exp_D}\tag{$D_{\nu,\nu'}$}
\left(\frac{r}{s}\right)^{\nu'}\lesssim \frac{V(x,r)}{V(x,s)} \lesssim\left(\frac{r}{s}\right)^{\nu},
\end{equation}
for all $r \geq s>0$ and $x\in M$. Let us remark that by the Bishop-Gromov volume comparison theorem, if $\mu$ is the Riemannian measure on $M$ and the Ricci curvature on $M$ is non-negative, then one can take $\nu=N$, the topological dimension of $M$, in \eqref{exp_D}. We also introduce the non-collapsing of the volume of balls of radius $1$, which may or may not hold on $M$:

\begin{equation}\label{NC}\tag{$NC$}
1\lesssim V(x,1),\qquad\forall x\in M.
\end{equation}
Thanks to the work of J. Cheeger, M. Gromov and M. Taylor \cite{CGT}, if the Riemann curvature is bounded on $M$ and $\mu$ is the Riemannian measure, then \eqref{NC} is equivalent to a lower bound of the injectivity radius of $M$. In other words, if the Riemann curvature is bounded on $M$, then \eqref{NC} is equivalent to $M$ having bounded geometry. Under milder assumptions on $M$ (for example, if $\mu$ is the Riemannian measure, Ricci curvature bounded from below is enough), \eqref{NC} is equivalent to a family of uniform, local Sobolev inequalities, as we shall explain later. This is a much weaker requirement on $M$, however we shall work in full generality and not assume \eqref{NC} in general.

We will assume sometimes that $(M,\mu)$ satisfies a family of {\em local} Poincar\'{e} inequalities: for every $p\geq 1$, and every $x\in M$, $r>0$, there exists a constant $C_{r}$ such that for all $x\in M$ and all $u\in C_0^\infty(B(x,r))$,

\begin{equation}\label{P_loc}\tag{$P_{loc}$}
\int_{B(x,r)}|u-u_{B(x,r)}|^p\,d\mu\leq C_{r}\int_{B(x,r)}|\nabla u|^p\,d\mu,
\end{equation}
where $u_{B(x,r)}=\frac{1}{V(x,r)}\int_{B(x,r)}u(y)\,d\mu(y).$ As a consequence of \cite{B}, \eqref{P_loc} is satisfied if $\mu$ is the Riemannian measure, and the Ricci curvature is bounded from below on $M$.

\medskip

Let us consider the (weighted) Laplacian $\Delta=\Delta_\mu=-\div(\nabla \cdot)$, where $-\div$ is the formal adjoint of $\nabla$ for the measure $\mu$: for every function $u$ and vector field $X$, both smooth and compactly supported,

$$-\int_M\div(X)u\,d\mu=\int_M(X,\nabla u)\,d\mu.$$
Let us consider the heat semi-group $e^{-t\Delta}$, and its kernel $p_t(x,y)$. Let us introduce on- and off-diagonal estimates for $p_t(x,y)$:

\begin{equation}\label{DUE}\tag{$DUE$}
p_t(x,x)\lesssim \frac{1}{V(x,\sqrt{t})},\qquad \forall x\in M,\,\forall t>0.
\end{equation}
and
\begin{equation}\label{UE}\tag{$UE$}
p_t(x,y)\lesssim \frac{1}{V(x,\sqrt{t})}\exp\left(-c\frac{d^2(x,y)}{t}\right),\qquad \forall (x,y)\in M^2,\,\forall t>0.
\end{equation}
It is a well-known fact (see e.g. \cite{Sik}), using the Gaffney-Davies estimates or equivalently,  the finite speed propagation for the wave equation, that under \eqref{D}, \eqref{DUE} and \eqref{UE} are equivalent. We also consider the two-sided Gaussian estimates (or Li-Yau estimates) for $p_t$: for all $x,y\in M$ and $t>0$,

\begin{equation}\label{LY}\tag{$LY$}
\frac{1}{V(x,\sqrt{t})}e^{-c_1\frac{d^2(x,y)}{t}}\lesssim p_t(x,y)\lesssim \frac{1}{V(x,\sqrt{t})}e^{-c_2\frac{d^2(x,y)}{t}}.
\end{equation}
By the work of A. Grigor'yan and L. Saloff-Coste (see e.g. \cite[Theorem 5.4.12]{SC}), \eqref{LY} is equivalent to the conjunction of \eqref{D} together with the scaled $L^2$ Poincar\'{e} inequalities for the measure $\mu$: 

\begin{equation}\label{P}\tag{$P$}
\int_B|u-u_B|^2\,\mu\lesssim r^2\int_M|\nabla u|^2\,\mu,
\end{equation}
for every ball $B=B(x,r)$ in $M$, and every $u\in C^\infty(B)$; here, $u_B=\frac{1}{\mu(B)}\int_Bu\,d\mu$. Also, it is known from \cite{G4} that \eqref{UE} is equivalent to a family of relative Faber-Krahn inequalities: there exists $\alpha>0$ such that

\begin{equation}\label{RFK}\tag{$RFK$}
\frac{1}{r^2}\left(\frac{V(x,r)}{\mu(\Omega)}\right)^\alpha\lesssim \lambda_1(\Omega),
\end{equation}
for any ball $B(x,r)$ and any relatively compact, open, subset $\Omega\Subset B(x,r)$; here, $\lambda_1(\Omega)$ is the first eigenvalue of $\Delta=\Delta_\mu$ on $\Omega$ with Dirichlet boundary conditions. Finally, let us recall the Sobolev inequality of dimension $n$:

 \begin{equation}\label{S}\tag{$S^n$}
||u||_{\frac{2n}{n-2}}\lesssim ||\nabla u||_2.
\end{equation}
The Sobolev inequality \eqref{S} implies that the following mapping properties for the operators $\Delta^{-\frac{\alpha}{2}}$, as well as the following Gagliardo-Nirenberg type inequalities, hold (see \cite{CSCV}):

\begin{equation}\label{mapping}
\Delta^{-\frac{\alpha}{2}} \,:\,L^p\rightarrow L^q,
\end{equation}
for every $1<p<\frac{n}{\alpha}$ and $\frac{1}{q}=\frac{1}{p}-\frac{\alpha}{n}$; and

\begin{equation}\label{GN}
||u||_\infty\leq C(n,r,s)||\Delta u||^\theta_{r/2}||u||^{1-\theta}_{s/2},\qquad\forall u\in C_0^\infty(M),
\end{equation}
for all $s\geq r>n$, and $\theta=\frac{n/s}{1-(n/r)+(n/s)}$. 

It is well-known that the Sobolev inequality \eqref{S} is related to the volume growth. In fact (see \cite[Theorem 3.1.5]{SC}), if \eqref{S} holds, then

\begin{equation}\label{vol_est}
r^n\lesssim V(x,r),\qquad\forall x\in M,\,\forall r>0,
\end{equation}
which implies that the exponent $n$ in \eqref{S} must be greater or equal to the topological dimension of $M$. This rules out some interesting manifolds that satisfy \eqref{D} and \eqref{UE}: for example, a complete, non-compact manifold with non-negative Ricci curvature (hence, satisfying \eqref{D} and \eqref{UE}), satisfies the Sobolev inequality \eqref{S} if and only if it has {\em maximal} volume growth, i.e. $V(x,r)\sim r^n$, for all $x\in M$, $r>0$ ($n$ being the topological dimension of $M$). An obvious consequence of \eqref{vol_est} is that the Sobolev inequality \eqref{S} implies the non-collapsing of balls \eqref{NC}. Under mild geometric assumptions, the non-collapsing of balls \eqref{NC} is in fact equivalent to a family of uniform, local Sobolev inequalities, as we explain now. Assume that $M$ satisfies \eqref{D} for balls of radius less than $1$, and the heat kernel estimate \eqref{UE} for times less than $1$. For example, this holds if $\mu$ is the Riemannian measure, and the Ricci curvature on $M$ is bounded from below. Then \eqref{NC} implies the following ultracontractivity estimate for small times:

$$||e^{-t\Delta}||_{1,\infty}\lesssim t^{-\frac{\nu}{2}},\qquad\forall t\in (0,1),$$
where $\nu$ is the exponent in \eqref{exp_D}. By the work of Varopoulos (see \cite{CSCV}), this ultracontractivity estimate is equivalent to a family of uniform, local Sobolev inequalities:

\begin{equation}\label{Sloc}\tag{$S^\nu_{loc}$}
||u||_{\frac{2\nu}{\nu-2}}\lesssim ||\nabla u||_2,\qquad\forall u\in C_0^\infty(B),
\end{equation}
for every geodesic ball $B$ of radius less than $1$. Conversely, it is well-known that \eqref{Sloc} implies the volume lower estimate:

$$r^\nu\lesssim V(x,r),\qquad\forall x\in M,\,\forall r\leq 1$$
(see the proof of \cite[Theorem 3.1.5]{SC}). Hence, under \eqref{D} (for balls of small radius) and \eqref{UE} (for small times), \eqref{NC} is equivalent to the family of uniform, local Sobolev inequalities \eqref{Sloc}.

\medskip

Finally, we recall some notions from potential theory. Let $p\in (1,\infty)$, and introduce the (weighted) $p$-Laplacian $\Delta_p(u)=-\div(|\nabla u|^{p-2}\nabla u)$. The notion of $p$-parabolicity of $M$ has recently proved to be important to study the boundedness of the Riesz transform, cf \cite{D2}, \cite{Ca}. It has several equivalent definitions, some of which we now recall. See \cite{CHSC} for more details and references. One definition is that $M$ is $p$-parabolic if and only if every positive supersolution of $\Delta_p$ is constant (Liouville property). An equivalent definition of $p$-parabolicity is that the $p$-capacity ofg every relatively compact, open subset of $M$ is zero. Recall that the $p$-capacity of $U$ is defined as

$$\mathrm{Cap}_p(U)=\inf_u \int_M|\nabla u|^p\,d\mu,$$
where the infimum is taken over all smooth (Lipschitz) functions $u$ with compact support in $M$, such that $u\geq1$ on $U$. If $M$ is not $p$-parabolic, it is said $p$-hyperbolic. In the case $p=2$, the term ``non-parabolic'' is also used. Another characterization of $p$-hyperbolicity of $M$ is the existence of a non-zero (and, in fact, positive) function $\rho$ such that the following $L^p$ Hardy-type inequality holds (see \cite{PT2}):

\begin{equation}\label{Hardy}
\int_M\rho |u|^p\,d\mu\leq \int_M|\nabla u|^p,\qquad\forall u\in C_0^\infty(M).
\end{equation}
It is well-known that volume growth estimates are related to $p$-parabolicity. It is shown in \cite[Corollary 3.2]{CHSC} that for $p\in (1,\infty)$, a necessary condition for $M$ to be $p$-hyperbolic is that for some (all) $x\in M$,

\begin{equation}\label{Vol_p}\tag{$V_p$}
\int_1^\infty \left(\frac{t}{V(x,t)}\right)^{1/p-1}dt<\infty.
\end{equation}
It is also known that in general, \eqref{Vol_p} is not sufficient. However, \eqref{Vol_p} is known to be sufficient if in addition $M$ satisfies \eqref{D} together with scaled $L^p$ Poincar\'{e} inequalities (\cite{H}), or if $M$ has {\em uniform} volume growth and satisfies $L^p$ pseudo-Poincar\'{e} inequalities (\cite[Proposition 3.4]{CHSC}). In particular, if the Ricci curvature is non-negative on $M$, then \eqref{Vol_p} is equivalent to the $p$-hyperbolicity of $M$. It is also true that if $p=2$ and $M$ satisfies \eqref{D} and \eqref{UE}, then ($V_2$) is equivalent to the $2$-hyperbolicity of $M$ (see \cite[Theorem 11.1]{G3}).We now introduce the parabolic dimension of $M$. Let

$$\mathfrak{I}=\{p\in(1,\infty)\,:\,M\mbox{ is }p-\mbox{parabolic}\}.$$
By an observation in \cite[p.1152-1153]{CHSC}, if $M$ satisfies \eqref{P_loc}, then $p$-parabolicity implies $q$-parabolicity for every $q>p$ (we were not aware of this fact in \cite{D2}). In particular, $\mathfrak{I}$ is an interval. Following \cite{C}, let us define

$$\kappa(M)=\inf\mathfrak{I},$$
the {\em parabolic dimension} of $M$ (notice that the term ``hyperbolic dimension'' has been used instead in \cite{D2}). Recall the exponents $\nu$ and $\nu'$ from \eqref{exp_D}, then by the fact that \eqref{Vol_p} is necessary for the $p$-hyperbolicity of $M$, we see that

$$\kappa\leq \nu.$$
We will see later, as a consequence of Theorem \ref{car_hyp} that under \eqref{D}, \eqref{UE} and \eqref{P_loc}, $\kappa\geq \nu'$. Let us highlight these two facts as a Lemma: 

\begin{Lem}\label{para_exp}

Let $M$ satisfying \eqref{D}, \eqref{UE} and \eqref{P_loc}. Recall the exponents $\nu$ and $\nu'$ from \eqref{exp_D}, and let $\kappa$ be the parabolic dimension of $M$. Then

$$\nu'\leq \kappa\leq \nu.$$

\end{Lem}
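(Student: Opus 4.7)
The plan is to derive both bounds from the comparison of the volume growth given by $(D_{\nu,\nu'})$ against the integral criterion $(V_p)$, using for one direction that $(V_p)$ is a necessary condition for $p$-hyperbolicity (stated above) and for the other direction that, under the assumptions $(D)$, $(UE)$ and $(P_{loc})$, Theorem \ref{car_hyp} presumably supplies the converse, namely that $(V_p)$ becomes sufficient for $p$-hyperbolicity. Given these two facts, the lemma reduces to a routine analysis of when the integral
$$\int_1^\infty\left(\frac{t}{V(x,t)}\right)^{1/(p-1)}\,dt$$
converges or diverges, based on the two-sided polynomial comparison of $V(x,t)$ furnished by $(D_{\nu,\nu'})$.

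For the upper bound $\kappa\leq\nu$, I would first note that $(D_{\nu,\nu'})$ applied with $s=1$ and $r=t\geq 1$ gives the upper volume bound $V(x,t)\lesssim V(x,1)\,t^{\nu}$. Consequently $\left(t/V(x,t)\right)^{1/(p-1)}\gtrsim C_x\, t^{(1-\nu)/(p-1)}$, and for any $p\geq\nu$ the exponent $(1-\nu)/(p-1)\geq -1$, so the integral in $(V_p)$ diverges. Since $(V_p)$ is necessary for $p$-hyperbolicity, every such $p$ lies in $\mathfrak{I}$, and hence $\kappa=\inf\mathfrak{I}\leq\nu$.

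For the lower bound $\nu'\leq\kappa$, I would symmetrically exploit the lower side of $(D_{\nu,\nu'})$: with $s=1$ and $r=t\geq 1$ we obtain $V(x,t)\gtrsim V(x,1)\,t^{\nu'}$, whence $\left(t/V(x,t)\right)^{1/(p-1)}\lesssim C_x'\,t^{(1-\nu')/(p-1)}$. For any $p<\nu'$ the exponent $(1-\nu')/(p-1)<-1$, so the integral in $(V_p)$ converges. By Theorem \ref{car_hyp}, under the standing hypotheses $(D)$, $(UE)$, $(P_{loc})$ the condition $(V_p)$ is \emph{sufficient} for $p$-hyperbolicity, so every $p<\nu'$ lies outside $\mathfrak{I}$. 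Since $\mathfrak{I}$ is an interval, this yields $\kappa\geq\nu'$ and completes the proof.

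The only genuine obstacle is the need for the converse implication (sufficiency of $(V_p)$) in the second step; this is not available from the general potential-theoretic statements quoted in the introduction, which give sufficiency only under additional assumptions such as scaled $L^p$ Poincaré inequalities or uniform volume growth. The whole argument therefore rests on Theorem \ref{car_hyp}, whose characterization must be strong enough to cover the present framework of $(D)$, $(UE)$, and merely \emph{local} Poincaré inequalities; once that theorem is granted, the rest is just the volume-integral bookkeeping sketched above.
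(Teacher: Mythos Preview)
Your proposal is correct and follows exactly the paper's own reasoning: the bound $\kappa\leq\nu$ comes from the necessity of \eqref{Vol_p} for $p$-hyperbolicity together with the upper volume estimate in \eqref{exp_D}, and the bound $\kappa\geq\nu'$ is deduced from Theorem~\ref{car_hyp}. One small simplification: Theorem~\ref{car_hyp} is phrased directly in terms of the polynomial lower bound $V(x_0,t)\geq C\,t^p$ rather than \eqref{Vol_p}, so for the second step you do not need the detour through the integral condition---the estimate $V(x_0,t)\gtrsim t^{\nu'}$ coming from the left-hand inequality in \eqref{exp_D} is already condition~(ii) of Theorem~\ref{car_hyp} for every $p<\nu'$, and the conclusion follows immediately (note also that Theorem~\ref{car_hyp} itself only uses \eqref{D} and \eqref{UE}; the hypothesis \eqref{P_loc} is what guarantees that $\mathfrak{I}$ is an interval so that $\kappa$ behaves as expected).
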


\subsection{Criticality and perturbation theory for Schr\"{o}dinger operators}

In all the paper, we fix $\{\Omega_k\}_{k=0}^\infty$ an {\em exhaustion} of $M$, i.e. a sequence of smooth, relatively compact domains of $M$ such that $\Omega_0\neq\emptyset$, $\bar{\Omega_k}\subset\Omega_{k+1}$ and

$$M=\bigcup_{k=0}^\infty\Omega_k.$$
We will denote 

$$\Omega_k^*:=M\setminus \Omega_k.$$
We associate to this exhaustion sequence a sequence of smooth cut-off function $\{\chi_k\}_{k=0}^\infty$ such that $\chi_k\equiv 1$ on $\Omega_k$, $\chi_k\equiv 0$ on $\Omega_{k+1}^*$, and $0\leq \chi_k\leq 1$ on $M$. 

We consider $P$ a general Schr\"{o}dinger-type elliptic operator on $M$ in divergence form

\begin{equation}\label{Schro}
Pu=-\div(A\nabla u)+cu,
\end{equation}
where $A$ is locally elliptic and symmetric, and for simplicity $A$ and $c$ are smooth. Let $q$ be the quadratic form associated with $P$, defined by

$$q(u)=\int_M (A\nabla u,\nabla u)d\mu+\int_M cu^2d\mu,\qquad\forall u\in C_0^\infty(M).$$
The positivity and criticality theory of these operators is well-established; we will limit ourselves to quote some results, and refer to the survey \cite{P6} for more details. Let us denote by $\mathcal{C}_P(M)$ the cone of positive solutions of $Pu=0$. $P$ is defined to be {\em non-negative} if its associated quadratic form $q$ is non-negative, and by the celebrated Allegretto-Piepenbrink theorem, this is equivalent to $\mathcal{C}_P(M)\neq \emptyset$. Next, a non-negative $P$ is either {\em subcritical} or {\em critical}, and it is well-known that these two notions have various equivalent definitions; here we define $P$ to be subcritical if it has positive minimal Green functions $G_P(x,y)$. In the case $P=\Delta$, $\Delta$ subcritical is equivalent to $M$ being $2$-hyperbolic. For example the Laplacian on the Euclidean space is subcritical if and only if $n\geq 3$. If $\Omega$ is a regular domain of $M$, we will denote by $G_P^\Omega$ the Green function of $P$ in $\Omega$, with Dirichlet boundary conditions.It is well-known that the sequence $\{G^{\Omega_n}_P\}_{n=0}^\infty$ is increasing, and converges pointwise to $G_P$. Let $\mathcal{V}\in L_{loc}^1$ be a potential. We say that $\mathcal{V}$ is {\em subcritical} for $P$ is $P+\mathcal{V}$ is a subcritical operator. There is a stronger notion, introduced by E.B. Davies and B. Simon \cite{DS}; first denote $\mathcal{V}_+=\max(\mathcal{V},0)$ (resp. $\mathcal{V}_-=\max(-\mathcal{V},0)$) the positive part (resp. negative part) of $\mathcal{V}$. With these notations, $\mathcal{V}_-$ is said {\em strongly subcritical} with respect to $P$ (or $P+\mathcal{V}$ is strongly positive, in the terminology of \cite{CZ}) if there is $\varepsilon\in (0,1]$ such that

\begin{equation}\label{scrit}
\int_M\mathcal{V}_-u^2\leq (1-\varepsilon)\left\{q(u)+\int_M \mathcal{V}_+u^2\right\},\qquad\forall u\in C_0^\infty(M).
\end{equation}
This is equivalent to the inequality $P+\mathcal{V}\geq\varepsilon (P+\mathcal{V}_+)$ at the level of quadratic forms. Notice that if $P$ is subcritical, then the strong subcriticality of $\mathcal{V}_-$ implies that $\mathcal{V}$ is subcritical for $P$. The converse is false in general, but true in some particular cases: for example, it is true if $P=\Delta$ on a manifold satisfying the Sobolev inequality \eqref{S}, and if $\mathcal{V}_-\in L^{\frac{n}{2}}$. Indeed, it is shown in \cite[Definition 6]{D1} that if \eqref{S} holds and $\mathcal{V}_-\in L^{\frac{n}{2}}$, then $\mathcal{V}_-$ is strongly subcritical if and only if $\mathrm{Ker}_{H_0^1}(\Delta+\mathcal{V})=\{0\}$, where $H_0^1$ is the completion of $C_0^\infty(M)$ for the norm $\sqrt{\mathcal{Q}}(u)=\left(\int_M|\nabla u|^2+\mathcal{V}_+u^2\right)^{1/2}$. But, if there exists $\varphi\in \mathrm{Ker}_{H_0^1}(\Delta+\mathcal{V})\setminus\{0\}$, then for every sequence $(\varphi_n)_{n\in \mathbb{N}}$ in $C_0^\infty(M)$, converging to $\varphi$ in $H_0^1$, 

\begin{equation}\label{null1}
\lim_{n\to\infty}\mathcal{Q}(\varphi_n)-\int_M\mathcal{V}_-\varphi_n^2=0.
\end{equation}
Also, by the Sobolev inequality, $H_0^1\hookrightarrow L^{\frac{2n}{n-2}}$, so that 

\begin{equation}\label{null2}
\lim_{n\to\infty}||\varphi_n-\varphi||_{\frac{2n}{n-2}}=0.
\end{equation}
Then, \eqref{null1} and \eqref{null2} imply that $(\varphi_n)_{n\in \mathbb{N}}$ is a {\em null-sequence} (see \cite[Definition 1.1]{PT1}), hence $\Delta+\mathcal{V}$ is critical. Therefore, $\mathcal{V}$ is subcritical if and only if $\mathcal{V}_-$ is strongly subcritical.\\ 

The perturbation theory by a potential for these Schr\"{o}dinger-type operators has been the topic of active research over the past 30 years. Various classes of perturbations have been introduced, in order to prove results such as the stability of the Martin boundary, or the (semi-)equivalence of the Green functions. Actually, both an analytic and a probabilistic approach to the perturbation theory have been developped in parallel. For example, the equivalence of the Green functions of $P$ and of $P+\mathcal{V}$ when both operators are subcritical and $\mathcal{V}$ is a \textit{small perturbation} ($S^\infty(M)$ class in the probabilistic terminology), first proved by analytic means by Pinchover in \cite{P2}, has been later rediscovered with a probabilistic proof by Z.Q. Chen \cite{Ch} and M. Takeda \cite{T2}, building on earlier work by Z. Zhao \cite{Zhao}. However, we warn the reader that the terminology of the perturbation classes in the probabilistic community is often different from the ones in the analytic community. 

We now introduce some  known classes of perturbation, as well as two new ones that are close to some classes introduced by Murata in \cite{M}, and that are tailored to the purposes of this article. Since we are not interested in local regularity issues, from now on the potential $\mathcal{V}$ will be assumed to belong to $L^q_{loc}$ for some $q>n/2$ (where $n$ is the dimension of $M$), which simplifies the definitions given below (indeed, similar definitions can be made in the case where $\mathcal{V}$ is a measure). 

\medskip

Let $P$ be subcritical of the form \eqref{Schro}. We say that $\mathcal{V}$ is a {\em small perturbation} of $P$ if

\begin{equation}\label{small}
\lim_{k\to\infty} \sup_{x,y\in \Omega_{k}^*}\int_{\Omega_k^*} \frac{G_P(x,z)|\mathcal{V}(z)|G_P(z,y)}{G_P(x,y)}\mathrm{d}z=0.
\end{equation}
Small perturbations have first been introduced in \cite{P2}. We say that $\mathcal{V}$ is a {\em G-bounded} perturbation of $P$ if

\begin{equation}\label{G-b}
\sup_{x,y\in M}\int_{M} \frac{G_P(x,z)|\mathcal{V}(z)|G_P(z,y)}{G_P(x,y)}\mathrm{d}z<\infty
\end{equation}
The notion of $G$-bounded perturbation has been introduced in \cite{M}. Of course, if $\mathcal{V}$ is a small perturbation then it is $G-$bounded. Let $h\in \mathcal{C}_P(M)$, i.e. $h$ is a positive solution of $Pu=0$. We say that $\mathcal{V}$ is in the {\em Kato class at infinity} with respect to $(P,h)$, denoted $K^\infty(M,P,h)$, if

\begin{equation}\label{Kato}
\lim_{k\to\infty}\sup_{x\in \Omega_k^*}\int_{\Omega_k^*}\frac{G_P(x,y)|\mathcal{V}(y)|h(y)}{h(x)}dy=0.
\end{equation}
In the case $P=\Delta$ and $h\equiv\mathbf{1}$, we will simply speak of the Kato class at infinity of $M$, denoted $K^\infty(M)$. For example, if $P=\Delta$, $M=\R^n$ for $n\geq3$, and for $|x|\geq A$, 

$$|\mathcal{V}(x)|\leq \frac{\varphi(|x|)}{|x|^2},$$
where $\varphi$ is a non-increasing, continuous function which satisfies

$$\int_A^\infty \frac{\varphi(s)}{s}\,ds<\infty,$$
then $\mathcal{V}\in K^\infty(\R^n)$ (see \cite[Lemma 2.3]{P4}).

More generally, we introduce the following new definition: for $\varepsilon>0$, we will say that $\mathcal{V}$ satisfies the condition $(K^\infty,M,P,h,\varepsilon)$ if 

\begin{equation}\label{Kato-eps}
\lim_{k\to\infty}\sup_{x\in \Omega_k^*}\int_{\Omega_k^*}\frac{G_P(x,y)|\mathcal{V}(y)|h(y)}{h(x)}dy<\varepsilon.
\end{equation}
When $P=\Delta$ and $h\equiv\mathbf{1}$, we will simply speak of the condition $(K^\infty,\varepsilon)$. By the Maximum Principle, in \eqref{small}, \eqref{Kato} and \eqref{Kato-eps}, one can replace the supremum over $\Omega_k^*$ by the supremum over $M$ (see \cite[Lemma 2.1]{M}). Finally, we introduce a notion closely related to the $H-$boundedness introduced by M. Murata in \cite{M}: for a positive solution $h$ of $Pu=0$, we say that $\mathcal{V}$ is $(H,h)-${\em bounded} if

\begin{equation}\label{H-b}
\sup_{x\in M}\int_{x\in M}\frac{G_P(x,y)|\mathcal{V}(y)|h(y)}{h(x)}dy<\infty.
\end{equation}
If $\mathcal{V}$ is $(H,h)-$bounded, then we define

$$||\mathcal{V}||_{H,h}:=\sup_{x\in M}\int_{x\in M}\frac{G_P(x,y)|\mathcal{V}(y)|h(y)}{h(x)}dy<\infty.$$
Obviously, if $\mathcal{V}$ satisfies condition $K^\infty(M,P,h,\varepsilon)$ for some $\varepsilon>0$, then $\mathcal{V}$ is $(H,h)-$bounded. In particular, if $\mathcal{V}$ is in the Kato class at infinity $K^\infty(M,P,h)$, then $\mathcal{V}$ is $(H,h)-$bounded. Furthermore, it is a direct application of Martin's theory that if $\mathcal{V}$ is a small (resp. $G-$bounded) perturbation of $P$, then for every $h\in \mathcal{C}_P(M)$, $\mathcal{V}$ is in $K^\infty(M,P,h)$ (resp., $\mathcal{V}$ is $(H,h)-$bounded) (see \cite{P1}, \cite{P2}, \cite{M}).\\

We warn again the reader that equivalent classes may be found in the literature, under different names. To conclude this discussion, let us give a particular but important example of potentials in $K^\infty(M)$:

\begin{Exa}\label{Sob_Kato}
{\em
Assume that $M$ satisfies the Sobolev inequality \eqref{S}, and let $\mathcal{V}\in L^{\frac{n}{2}\pm\varepsilon}$, for some $\varepsilon>0$. Then $\mathcal{V}\in K^\infty(M)$.
}
\end{Exa}

\begin{Rem}
{\em
This example will be generalized later (in Proposition \ref{cond_Kato}) to manifolds which satisfy only \eqref{D} and \eqref{UE}, but not the Sobolev inequality \eqref{S}.
}
\end{Rem}
\begin{proof}

Let 

$$u(x)=\int_MG(x,y)|\mathcal{V}(y)|dy,$$
that is, $u=\Delta^{-1}|\mathcal{V}|$. Then by the fact that $\mathcal{V}\in L^{\frac{n}{2}-\varepsilon}$ and \eqref{mapping}, there is $s>n$ defined by $\frac{2}{s}=\frac{1}{\frac{n}{2}-\varepsilon}-\frac{2}{n}$, such that $u\in L^{s/2}$. Also, $\Delta u=\mathcal{V}\in L^{r/2}$ with $\frac{r}{2}=\frac{n}{2}+\varepsilon$, therefore by \eqref{GN}, we deduce that 

\begin{equation}\label{est_inf}
||u||_\infty\leq C(n,\varepsilon)||\mathcal{V}||_{\frac{n}{2}+\varepsilon}^{\theta}||u||^{1-\theta}_{\frac{s}{2}} \leq C(n,\varepsilon)||\mathcal{V}||_{\frac{n}{2}+\varepsilon}^{\theta}||\mathcal{V}||^{1-\theta}_{\frac{n}{2}-\varepsilon}.
\end{equation}
Let $\mathcal{V}_k=\mathcal{V}\chi_{k}$, then 

$$\lim_{R\to\infty}||\mathcal{V}_k||_{\frac{n}{2}\pm\varepsilon}=0.$$
Applying \eqref{est_inf} with $\mathcal{V}_k$ instead of $\mathcal{V}$, and letting $k\to\infty$, we deduce that

$$\lim_{k\to\infty} \sup_{x\in M}\int_{\Omega_k^*} G(x,y)|\mathcal{V}(y)|dy=0,$$
i.e. $\mathcal{V}$ belongs to $K^\infty(M)$.

\end{proof}

\subsection{$h$-transform}

We recall a standard procedure to eliminate the zero-order term of an operator $P$ of the form \eqref{Schro}. Let $h \in \mathcal{C}_{P}(M)$, and define a map
\begin{equation}
\label{eq:gmap}
T_h: v \to hv\,.
\end{equation}
Notice that $T_h$ is an isometry between $L^2(M,\, h^2d\nu)$ and $L^2(M,\, d\nu)$. The operator $P_h:=T_h^{-1} \circ P \circ T_h$, that is,
\begin{equation}
\label{htransform}
P_h u = \frac{P(h u)}{h}
\end{equation}
is called the {\em $h$-transform} (or {\em Doob transform}) of $P$. Notice that

\begin{equation*}
P_h \mathbf{1}=0 .
\end{equation*}
Also, it is not hard to see that $P_h$ is explicitly given by
\begin{equation}\label{Ph}
P_h u=-\frac{1}{h^{2}}\mathrm{div}(h^2 A(x)\nabla u),
\end{equation}
and $P_h$ is self-adjoint on $L^2(M,\,h^{2}\mathrm{d} \nu)$. Moreover, $P$ and $P_h$ are unitary equivalent. It is also easy to prove that $P_h$ is subcritical if and only if
$P$ is subcritical, and in this case, the corresponding Green function satisfies
$$G_{P_h}(x,y)=\frac{G_{P}(x,y)}{h(x)h(y)}.$$
On the other hand, in the critical case $\mathbf{1}$ is the ground state of the equation $P_h u=0$ in $M$. Finally, notice that the use of the $h-$transform allows to rewrite \eqref{Kato-eps}, i.e. the condition $(K^\infty,P,h,\varepsilon)$, as

$$\lim_{k\to\infty} ||P_h^{-1}|\mathcal{V}|||_{L^\infty(\Omega_k^*)\to L^\infty(\Omega_k^*)}<\varepsilon.$$

\section{Perturbation result for positive solutions of a Schr\"{o}dinger operator}

In \cite{M}, Murata introduced the class of {\em semi-small perturbations}, and proved that if $\mathcal{V}$ is a semi-small perturbation of a subcritical operator $P$, then the minimal Martin boundaries of $P$ and $P+\mathcal{V}$ are homeomorphic. By the Martin representation theorem, every positive solution $h$ of $Pu=0$ in $M$ can be written as 

$$h(x)=\int_{\partial_m (M,P)}K(x,\xi)\,d\nu(\xi),$$
for some probability measure $d\nu$ on the minimal Martin boundary $\partial_m (M,P)$. Here, $K(x,\xi)$ denotes the Martin kernel. This implies that if $\mathcal{V}$ is a semi-small perturbation of $P$, then there is a bijection that preserves order from the cones of positive solutions of $P$, $\mathcal{C}_P(M)$, into $\mathcal{C}_{P+\mathcal{V}}$. In this section, we will be concerned with the following related problem:

\begin{Prob}\label{Q}

Let $P$ in the form \eqref{Schro} be subcritical. If $h$ is a positive solution of $Pu=0$ in $M$, under which conditions on $\mathcal{V}$ does there exist $g\sim h$ such that $(P+\mathcal{V})g=0$? 

\end{Prob}
In the case $P=\Delta$ and $h\equiv\mathbf{1}$, an answer to Problem \ref{Q} can be extracted from Takeda's article \cite{T1}. Takeda's arguments are of probabilist nature; the main idea goes back to the pioneering work of B. Simon \cite{S}, who first proved by a probabilistic argument that on $\R^n$, $n\geq3$, if $\mathcal{V}\in L^{\frac{n}{2}-\varepsilon}\cap L^{\frac{n}{2}+\varepsilon}$ then the existence of $g\sim \mathbf{1}$ solution of $(\Delta+\mathcal{V})u=0$, is equivalent to $\Delta+\mathcal{V}$ being subcritical. Problem \ref{Q} was also studied by Pinchover, who solved it for small perturbations (see \cite[Lemma 2.4]{P1}, \cite[Lemma 1.1]{P2}). In this section, we will present an answer to Problem \ref{Q}, more precisely we will give an analytic proof of the following result:

\begin{Thm}\label{pos_sol}

Let $P$ be subcritical. Let $h$ be a positive solution of $Pu=0$ in $M$, and let $\mathcal{V}$ be a potential such that $\mathcal{V}_-$ satisfies $(K^\infty,P,h, 1)$ and $\mathcal{V}_+$ is $(H,h)-$bounded. Assume that $P+\mathcal{V}$ is subcritical. Then there exists $g\sim h$, positive solution of $(P+\mathcal{V})u=0$. Furthermore, $g$ satisfies

$$g(x)=h(x)-\int_MG_P(x,y)\mathcal{V}(y)g(y)dy.$$

\end{Thm}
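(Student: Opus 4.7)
The plan is to apply the $h$-transform to reduce to the case $h\equiv\mathbf{1}$ and then build $u=g/h$ as the limit of solutions to Dirichlet problems on the exhaustion $\{\Omega_n\}$. Since $\mathcal{V}$ is a zero-order term it is unchanged by the $h$-transform, and the transformed operator $P_h+\mathcal{V}$ inherits the subcriticality of $P+\mathcal{V}$; using $G_{P_h}(x,y)=G_P(x,y)/\bigl(h(x)h(y)\bigr)$, the hypotheses $(K^\infty,P,h,1)$ on $\mathcal{V}_-$ and $(H,h)$-boundedness of $\mathcal{V}_+$ translate into the analogous conditions for $P_h$ with $h\equiv\mathbf{1}$. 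Writing $P$ for $P_h$ from now on, $P\mathbf{1}=0$ and both $P$ and $P+\mathcal{V}$ are subcritical; the goal becomes to find $u\sim\mathbf{1}$ with $(P+\mathcal{V})u=0$ satisfying
\[
u(x)=1-\int_M G_P(x,y)\mathcal{V}(y)u(y)\,dy,
\]
the desired solution being $g=hu$.

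For each $n$, since $\lambda_1^{\Omega_n}(P+\mathcal{V})>0$ by subcriticality, the Dirichlet problem $(P+\mathcal{V})u_n=0$ in $\Omega_n$, $u_n=1$ on $\partial\Omega_n$, is uniquely solvable, and the maximum principle gives $u_n>0$. Because $P\mathbf{1}=0$, Green's representation applied to $u_n-\mathbf{1}$ yields
\[
u_n(x)=1-\int_{\Omega_n}G_P^{\Omega_n}(x,y)\mathcal{V}(y)u_n(y)\,dy,\qquad x\in\Omega_n.
\]
The main obstacle is a uniform two-sided bound $C^{-1}\le u_n\le C$ on $\Omega_n$, independent of $n$. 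For the upper bound, use the $(K^\infty,P,\mathbf{1},1)$-condition on $\mathcal{V}_-$ to pick $k_0$ with $\sup_{x\in M}\int_{\Omega_{k_0}^*}G_P(x,y)\mathcal{V}_-(y)\,dy<\alpha<1$, and then split the integral into contributions from $\Omega_{k_0}$ and $\Omega_{k_0}^*$:
\[
u_n(x)\le 1+\int_{\Omega_{k_0}}G_P(x,y)\mathcal{V}_-(y)u_n(y)\,dy+\alpha\|u_n\|_{L^\infty(\Omega_n)}.
\]
To absorb the $\alpha\|u_n\|_\infty$-term, the $\Omega_{k_0}$-integral must be controlled by a constant independent of $n$; here the subcriticality of $P+\mathcal{V}$ enters crucially, through comparison of $u_n$ with a fixed reference positive solution $\psi\in\mathcal{C}_{P+\mathcal{V}}(M)$ provided by Allegretto--Piepenbrink. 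Specifically, $u_n/\psi$ is harmonic for the $\psi$-transform of $P+\mathcal{V}$ (which has no zero-order term) and equals $1/\psi$ on $\partial\Omega_n$, so the maximum principle gives $u_n(x)\le\psi(x)\,\max_{\partial\Omega_n}(1/\psi)$; for an appropriate choice of $\psi$ (e.g.\ normalized so as to be bounded away from zero on the full exhaustion, which is where the subcriticality, as opposed to mere non-negativity, is used) this yields the needed uniform bound on $u_n|_{\Omega_{k_0}}$. The lower bound is obtained symmetrically, using the $(H,\mathbf{1})$-boundedness of $\mathcal{V}_+$ together with an analogous comparison.

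Finally, the uniform bounds together with standard elliptic compactness extract a subsequence of $u_n$ converging locally uniformly to $u$ with $C^{-1}\le u\le C$ and $(P+\mathcal{V})u=0$ on $M$. Since $G_P^{\Omega_n}\nearrow G_P$ monotonically, and $|\mathcal{V}|u_n\le C|\mathcal{V}|$ with $x\mapsto\int_M G_P(x,y)|\mathcal{V}(y)|\,dy$ uniformly bounded by the $(H,\mathbf{1})$-boundedness of $|\mathcal{V}|$, the dominated convergence theorem passes the integral identity to the limit, giving
\[
u(x)=1-\int_M G_P(x,y)\mathcal{V}(y)u(y)\,dy;
\]
then $g=hu$ is the desired positive solution of $(P+\mathcal{V})g=0$ with $g\sim h$ and the stated integral representation.
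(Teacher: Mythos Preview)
Your overall strategy --- reduce via the $h$-transform, solve Dirichlet problems $(P+\mathcal{V})u_n=0$ on $\Omega_n$ with boundary value $1$, and pass to the limit --- is natural, but the argument has a genuine gap at the heart of the uniform bounds.

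For the upper bound you arrive at
\[
(1-\alpha)\|u_n\|_\infty \le 1 + C_{k_0}\sup_{\Omega_{k_0}} u_n,
\]
and propose to control $\sup_{\Omega_{k_0}} u_n$ by comparing with a fixed $\psi\in\mathcal{C}_{P+\mathcal{V}}(M)$ that is \emph{bounded away from zero on all of $M$}. But the existence of such a $\psi$ is precisely the content of the theorem (indeed, the lower bound $g\gtrsim h$). Subcriticality alone guarantees the existence of a positive Green function and hence of \emph{some} positive solution, but gives no a priori lower bound at infinity; for example $G_{P+\mathcal{V}}(x_0,\cdot)$ is a positive solution outside $x_0$ and typically tends to zero. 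So this step is circular as written. The lower bound has the same difficulty, and in addition the ``symmetric'' estimate $u_n(x)\ge 1-\|\mathcal{V}_+\|_{H,\mathbf{1}}\|u_n\|_\infty$ is useless once $\|\mathcal{V}_+\|_{H,\mathbf{1}}\ge 1$, which the hypotheses allow.

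The paper avoids both obstacles by a different construction. It first treats the case $\mathcal{V}\ge 0$: setting $g_t=h-t\,G_{P+t\mathcal{V}}(\mathcal{V}h)$, one shows via a log-convexity inequality $g_{t_1}\le g_{t_0}^{1-\alpha}g_{t_2}^{\alpha}$ (proved by comparing Dirichlet solutions on $\Omega_n$) that $g_t\ge e^{-t\|\mathcal{V}\|_{H,h}}h$ for all $t$, regardless of the size of $\|\mathcal{V}_+\|_{H,h}$. Then the general case is handled by splitting $\mathcal{V}_-=\mathcal{V}_{-,0}+\mathcal{V}_{-,\infty}$ with $\mathcal{V}_{-,0}$ compactly supported and $\|\mathcal{V}_{-,\infty}\|_{H,h}<1$: Step~1 gives $g_1\sim h$ for $P+\mathcal{V}_+$; a Neumann series (Lemma~\ref{Vsmall}) then gives $g_2\sim g_1$ for $P+\mathcal{V}_+-\mathcal{V}_{-,\infty}$; finally the compactly supported piece $\mathcal{V}_{-,0}$ is absorbed using that $G_{P+\mathcal{V}}\sim G_{P+\mathcal{V}_+-\mathcal{V}_{-,\infty}}$ (equivalence of Green functions under compactly supported subcritical perturbations) together with a maximum-principle argument. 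Subcriticality of $P+\mathcal{V}$ enters only in this last step, to ensure $G_{P+\mathcal{V}}$ exists and is comparable to $G_L$. The integral equation is then verified a posteriori by an approximation on $\Omega_n$, which is close in spirit to your limiting procedure, but only \emph{after} $g$ has been built and shown to satisfy $g\sim h$.
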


\begin{Rem}
{\em
In particular, Theorem \ref{pos_sol} applies if $\mathcal{V}$ is in the Kato class at infinity $K^\infty(M,P,h)$.
}
\end{Rem}

\begin{Rem}
{\em
In fact, as the proof will show, the following lower estimate of $g$ holds (compare with Equation (3.2) in \cite{T1}):

$$e^{-||\mathcal{V}_+||_{H,h}}\leq \frac{g}{h}.$$
In \cite{T1}, in the case $P=\Delta$, $h\equiv \mathbf{1}$ and under the extra assumption that $\mathcal{V}_-$ is strongly subcritical with respect to $\Delta+\mathcal{V}$, an  upper-bound --with a probabilistic flavor-- for $g$ is given:

$$\frac{g}{h}\leq \sup_{x\in M}\mathbf{E}_x\exp\left(-\int_0^\infty \mathcal{V}(B_s)\mathrm{d}s\right),$$
where $B_s$ is the Brownian motion on $M$, and $\mathbf{E}_x$ is the conditional expectation, starting from $x$. 
 
}
\end{Rem}
In the case $P=\Delta$ and $h\equiv 1$, and $\mathcal{V}_-$ strongly subcritical with respect to $\Delta+\mathcal{V}_+$, Theorem \ref{pos_sol} follows from Theorem 1, Equation (3.2), as well as Lemma 2, in \cite{T1}. It was not noticed in \cite{T1} that the strong subcriticality of $\mathcal{V}_-$ can be replaced by the weaker assumption that $\Delta+\mathcal{V}$ is subcritical. Also, our assumption that $\mathcal{V}_-$ satisfies $(K^\infty,P,h,1)$ is weaker than the assumption that $\mathcal{V}_-$ belongs to $K^\infty(M,P,h)$. In the case where $\mathcal{V}$ is a small perturbation of $P$ (which is a much stronger condition), Theorem \ref{pos_sol} follows from \cite[Lemma 2.4]{P1} and \cite[Lemma 2.4]{P2}.  

Let us start with the following lemma, which is essentially well-known (see \cite[Lemma 3.3]{P3}), but whose proof is provided since it will be instrumental in the proof of Theorem \ref{pos_sol}:

\begin{Lem}\label{Vsmall}

Assume that $\mathcal{V}$ is $(H,h)-$bounded and that

$$||\mathcal{V}||_{H,h}<\frac{1}{2}.$$
Then there exists $g\sim h$ solution of $(P+\mathcal{V})u=0$. Furthermore, if $\mathcal{V}$ is non-positive, then the existence of $g$ is guaranteed as soon as

$$||\mathcal{V}||_{H,h}<1.$$

\end{Lem}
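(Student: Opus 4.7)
The plan is to reduce to the case $h \equiv \mathbf{1}$ via the $h$-transform of Section 2, and then construct the solution by a Neumann series in $L^\infty$. Setting $u = hv$, the equation $(P+\mathcal{V})u = 0$ becomes $(P_h + \mathcal{V})v = 0$, and the identity $G_{P_h}(x,y) = G_P(x,y)/(h(x)h(y))$, together with the self-adjointness of $P_h$ on $L^2(M, h^2 d\mu)$, gives
$$\bigl(P_h^{-1}|\mathcal{V}|\bigr)(x) \;=\; \frac{1}{h(x)}\int_M G_P(x,y)\,|\mathcal{V}(y)|\,h(y)\, d\mu(y),$$
so $\|\mathcal{V}\|_{H,h} = \|P_h^{-1}|\mathcal{V}|\|_{L^\infty(M)}$. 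One may therefore assume $h \equiv \mathbf{1}$, $P\mathbf{1}=0$, and look for $v \sim 1$ solving $(P+\mathcal{V})v = 0$, or equivalently, solving the fixed-point equation
$$v = \mathbf{1} - Tv, \qquad (T\phi)(x) := \int_M G_P(x,y)\,\mathcal{V}(y)\,\phi(y)\, d\mu(y).$$

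Under the reduced hypothesis, $T$ is a bounded operator on $L^\infty(M)$ with $\|T\|_{\infty\to\infty} \leq \|\mathcal{V}\|_{H} < 1$. Hence the Neumann series
$$v := \sum_{k=0}^\infty (-T)^k \mathbf{1}$$
converges in $L^\infty(M)$ and satisfies $v + Tv = \mathbf{1}$. In the general case $\|\mathcal{V}\|_{H} < 1/2$, the tail bound $\|v - \mathbf{1}\|_\infty \leq \|T\|/(1-\|T\|) < 1$ immediately yields $0 < v < 2$, so $v \sim 1$. In the non-positive case $\mathcal{V} \leq 0$, the kernel of $-T$ is $G_P(x,y)|\mathcal{V}(y)| \geq 0$, so $-T$ is positivity preserving; each term $(-T)^k\mathbf{1}$ is then non-negative, and the series yields $1 \leq v \leq 1/(1-\|\mathcal{V}\|_{H})$, which handles the looser regime $\|\mathcal{V}\|_{H} < 1$ claimed in the statement. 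In both regimes $g := hv$ satisfies $g \sim h$ together with the integral representation $g = h - \int_M G_P(\cdot, y)\,\mathcal{V}(y)\,g(y)\, d\mu(y)$.

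To finish, one checks that $v$ is actually a (weak, then classical) solution of $(P+\mathcal{V})v = 0$. Applying $P$ to $v = \mathbf{1} - P^{-1}(\mathcal{V}v)$ and using $P\mathbf{1} = 0$ together with $P\circ P^{-1} = \mathrm{id}$, one formally recovers $Pv = -\mathcal{V}v$. The main technical obstacle is making this rigorous: since $\mathcal{V}v$ is only in $L^q_{\mathrm{loc}}$ (with $q > n/2$ under the standing hypothesis on $\mathcal{V}$), one must either invoke the distributional identity $P G_P(\cdot,y) = \delta_y$ against smooth compactly supported test functions, or approximate $\mathcal{V}v$ by bounded compactly supported sources and pass to the limit uniformly via the $(H,h)$-boundedness. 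Standard elliptic regularity, applicable since $v$ is bounded and $\mathcal{V} \in L^q_{\mathrm{loc}}$, then promotes $v$ to a classical positive solution, completing the proof.
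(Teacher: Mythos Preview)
Your proof is correct and follows essentially the same approach as the paper: a Neumann series for the fixed-point equation $v = \mathbf{1} - Tv$ (equivalently $g = h - P^{-1}\mathcal{V}g$), with the same two-sided bounds $(1-2\varepsilon)/(1-\varepsilon) \leq v \leq 1/(1-\varepsilon)$ in the general case and $1 \leq v \leq 1/(1-\varepsilon)$ when $\mathcal{V}\leq 0$. The only cosmetic difference is that you first reduce to $h\equiv\mathbf{1}$ via the $h$-transform, whereas the paper works directly with $h$ and the iterates $h_k = (P^{-1}\mathcal{V})^k h$; your added paragraph on regularity makes explicit what the paper dismisses as ``easy to see''.
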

For the convenience of the reader, we give the proof of Lemma \ref{Vsmall}:\\

\noindent {\em Proof of Lemma \ref{Vsmall}:} let 

$$\varepsilon:=\sup_{x\in M}\int_M \frac{G_P(x,y)|\mathcal{V}(y)|h(y)}{h(x)}dy<1.$$
We want to define $g$ by the formula

$$g=(I+P^{-1}\mathcal{V})^{-1}h.$$
In fact, let us define $g$ by the Neumann series

$$g=\sum_{k=0}^\infty (-1)^k(P^{-1}\mathcal{V})^kh.$$
If the series converges, then it is easy to see that $g$ is solution of $(P+\mathcal{V})u=0$. Define $h_k:=(P^{-1}\mathcal{V})^kh$. Then,

$$h_k(x)=\int_MG_P(x,y)\mathcal{V}(y)h_{k-1}(y)dy,$$
and by an easy induction,

$$|h_k|\leq \varepsilon^{k}h.$$
Hence

$$\left(1-\sum_{k=1}^\infty \varepsilon^{k}\right)h\leq g\leq \left(\sum_{k=0}^\infty \varepsilon^{k}\right)h,$$
that is

$$\frac{1-2\varepsilon}{1-\varepsilon}h\leq g\leq \frac{1}{1-\varepsilon}h,$$
hence the result in the general case. In the case where $\mathcal{V}$ is non-positive, then $P^{-1}\mathcal{V}\leq 0$, which implies that $h\leq g$ from the definition of $g$. Thus, by the previous computation,

$$h\leq g\leq \frac{1}{1-\varepsilon}h,$$
hence $g\sim h$ as soon as $\varepsilon<1$.

\cqfd

\medskip

\noindent{\em Proof of Theorem \ref{pos_sol}: } we split the proof into two parts.

\medskip

\noindent {\em Step 1: case $\mathcal{V}\geq0$}.

\medskip

Without loss of generality, one can assume that $\mathcal{V}\neq 0$, that is $||\mathcal{V}||_{H,h}>0$. For $t\geq 0$, define

$$g_t(x)=h(x)-t\int_MG_{P+t\mathcal{V}}(x,y)\mathcal{V}(y)h(y)\,dy.$$
We will employ the following lemma:

\begin{Lem}\label{conv}

For every $t\geq 0$, $g_t$ is a positive solution of $(P+t\mathcal{V})u=0$. Furthermore, let $0\leq t_0<t_1<t_2<\infty$, and define $\alpha\in (0,1)$ by

$$t_1=(1-\alpha)t_0+\alpha t_2.$$
Then

$$g_{t_1}\leq g_{t_0}^{1-\alpha}g_{t_2}^\alpha.$$

\end{Lem}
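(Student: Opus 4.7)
The plan is to establish everything on the compact exhaustion first and then pass to the limit. I would introduce, for each $k$,
\[
g_t^k(x):=h(x)-t\int_{\Omega_k}G^{\Omega_k}_{P+t\mathcal{V}}(x,y)\mathcal{V}(y)h(y)\,dy.
\]
Using $Ph=0$ and the defining property of $G^{\Omega_k}_{P+t\mathcal{V}}$, $g_t^k$ solves $(P+t\mathcal{V})u=0$ in $\Omega_k$ with $u=h$ on $\partial\Omega_k$. Since $P$ is subcritical on $M$ and $\mathcal{V}\geq 0$ in Step~1, $P+t\mathcal{V}$ is subcritical on $M$, so its Dirichlet principal eigenvalue on each $\Omega_k\Subset M$ is strictly positive and the weak/strong maximum principle is valid there; in particular $g_t^k>0$ in $\Omega_k$. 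Since $G^{\Omega_k}_{P+t\mathcal{V}}\uparrow G_{P+t\mathcal{V}}$ as $k\to\infty$, the sequence $g_t^k$ decreases to the function $g_t$ of the statement, which therefore satisfies $g_t\geq 0$ and $(P+t\mathcal{V})g_t=0$ on $M$.

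Next I would prove the log-convexity at the level of $g_t^k$ and then take limits. Working in $\Omega_k$, I can set $\varphi_i:=\log g_{t_i}^k$; the equation $(P+t_i\mathcal{V})e^{\varphi_i}=0$ becomes the Riccati-type identity
\[
-\mathrm{div}(A\nabla\varphi_i)-(A\nabla\varphi_i,\nabla\varphi_i)+c+t_i\mathcal{V}=0,\qquad i=0,2.
\]
Combining these with weights $1-\alpha$ and $\alpha$, using the linearity of $-\mathrm{div}(A\nabla\cdot)$ and the pointwise convexity of the quadratic form $v\mapsto(Av,v)$, I obtain
\[
-\mathrm{div}(A\nabla\psi)-(A\nabla\psi,\nabla\psi)+c+t_1\mathcal{V}\geq 0,
\]
where $\psi:=(1-\alpha)\varphi_0+\alpha\varphi_2$. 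Equivalently, $f:=e^\psi=(g_{t_0}^k)^{1-\alpha}(g_{t_2}^k)^\alpha$ is a supersolution of $(P+t_1\mathcal{V})u=0$ in $\Omega_k$; since $f=h=g_{t_1}^k$ on $\partial\Omega_k$, the maximum principle gives $g_{t_1}^k\leq f$ in $\Omega_k$, and letting $k\to\infty$ (using continuity of $x\mapsto x^\alpha$ at $0$) yields the claimed inequality $g_{t_1}\leq g_{t_0}^{1-\alpha}g_{t_2}^\alpha$ on $M$.

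Strict positivity of $g_t$ will then follow from the log-convexity and continuity at $t=0$. The bound $0\leq t\int G_{P+t\mathcal{V}}(x,y)\mathcal{V}(y)h(y)\,dy\leq t\,||\mathcal{V}||_{H,h}\,h(x)$ (from $G_{P+t\mathcal{V}}\leq G_P$, since $\mathcal{V}\geq 0$) shows that $g_t\to h$ pointwise as $t\to 0^+$. If $g_{t_2}$ vanished at a point, the strong maximum principle would force $g_{t_2}\equiv 0$ on $M$, and the log-convexity with $t_0=0$, $\alpha=t_1/t_2$ would give $g_{t_1}\leq h^{1-\alpha}\cdot 0 = 0$, hence $g_{t_1}\equiv 0$ for every $t_1\in(0,t_2)$, contradicting $g_{t_1}\to h>0$. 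The subtle point is precisely this positivity argument: trying to establish it directly by continuation from Lemma~\ref{Vsmall} runs into the difficulty that the relevant $H$-norm of $\mathcal{V}$ may blow up after each restart at some $g_{t_0}$ with possibly small lower bound, so it is essential to use the log-convexity trick to close the loop.
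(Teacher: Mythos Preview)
Your proof is correct and follows essentially the same route as the paper: define approximants $g_t^k$ on the exhaustion solving the Dirichlet problem with boundary data $h$, verify that $(g_{t_0}^k)^{1-\alpha}(g_{t_2}^k)^\alpha$ is a supersolution of $P+t_1\mathcal{V}$ (you spell out the Riccati computation, the paper just cites \cite{P5},\cite{DFP}), apply the maximum principle, and pass to the limit.

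The only notable difference is in the positivity argument. The paper argues directly: for $0<\varepsilon<\|\mathcal{V}\|_{H,h}^{-1}$ one has $g_\varepsilon\geq C_\varepsilon h$ with $C_\varepsilon=1-\varepsilon\|\mathcal{V}\|_{H,h}>0$, and then the log-convexity with $t_0=0$, $t_1=\varepsilon$, $t_2=t$ gives $g_t\geq C_\varepsilon^{t/\varepsilon}h$. Your contradiction argument via the strong maximum principle is equally valid for the lemma as stated, but note that the paper's quantitative lower bound is precisely what is exploited in the surrounding proof of Theorem~\ref{pos_sol} (letting $\varepsilon\to0$ to obtain $g_t\geq e^{-t\|\mathcal{V}\|_{H,h}}h$), so the direct approach has the advantage of feeding straight into that estimate.
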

Assuming for the moment the result of Lemma \ref{conv}, let us finish the proof of Step 1. We apply Lemma \ref{conv} with $t_0=0$, $t_1=\varepsilon$ and $t_2=t>\varepsilon$. It yields

$$g_t^{\varepsilon/t}\geq g_{\varepsilon} h^{-1+\varepsilon/t}.$$
Since $\mathcal{V}\geq0$, one has $G_{P+t\mathcal{V}}\leq G_P$, therefore, for all $x\in M$,

\begin{eqnarray}
\int_MG_{P+t\mathcal{V}}(x,y)\mathcal{V}(y)h(y)\,dy&\leq &\int_MG_P (x,y)\mathcal{V}(y)h(y)\,dy\nonumber\\
&\leq &||\mathcal{V}||_{H,h}\,h(x).\nonumber
\end{eqnarray}
Consequently, if $C_\varepsilon=1-\varepsilon ||\mathcal{V}||_{H,h}$, one has for $\varepsilon<||\mathcal{V}||_{H,h}^{-1}$,

$$C_{\varepsilon} h\leq g_{\varepsilon}\leq h.$$
Thus

$$g_t\geq C_{\varepsilon}^{t/\varepsilon} h=e^{-C'_{\varepsilon}t}h,$$
where $C'_{\varepsilon}=-\varepsilon^{-1}\log(1-\varepsilon||\mathcal{V}||_{H,h})$. Letting $\varepsilon\to0$, one gets

$$g_t\geq e^{-t||\mathcal{V}||_{H,h}}h.$$
Applying this for $t=1$ and defining $g=g_1$, we find that $g$ is a positive solution of $(P+\mathcal{V})u=0$ such that

$$e^{-||\mathcal{V}||_{H,h}}\leq \frac{g}{h}\leq 1.$$
This concludes the proof in the case where $\mathcal{V}\geq0$.

\medskip

\noindent {\em Step 2: general case}.

\medskip

Since $\mathcal{V}_-$ satisfies condition $(K^\infty,P,h,1)$, we can fix $k\in\mathbb{N}$ such that 

$$\sup_{x\in M}\int_{\Omega_k^*}\frac{G_P(x,y)\mathcal{V}_-(y)h(y)}{h(x)}\,dy<1.$$
Define

$$\mathcal{V}_{-,0}=\mathcal{V}_-\chi_k,\,\,\mathcal{V}_{-,\infty}=\mathcal{V}_--\mathcal{V}_{-,0}.$$
Notice that $\mathcal{V}_{-,0}$ has compact support, and that  $||\mathcal{V}_{-,\infty}||_{H,h}<1$. By Step 1, there exists $g_1\sim h$, solution of $(P+t\mathcal{V}_+)u=0$, and $g_1$ is given by

$$g_1=h-(P+\mathcal{V}_+)^{-1}\mathcal{V}_+h.$$ 
Since $G_{P+\mathcal{V_+}}\leq G_P$, 

$$\sup_{x\in M}\int_{M}\frac{G_{P+\mathcal{V}_+}(x,y)\mathcal{V}_{-,\infty}(y)h(y)}{h(x)}\,dy<1.$$
By Lemma \ref{Vsmall}, there exists $g_2\sim g_1$ solution of $(P+\mathcal{V}_+-\mathcal{V}_{-,\infty})u=0$, and $g_2$ is given by

$$g_2=(I-(P+\mathcal{V}_+)^{-1}\mathcal{V}_{-,\infty})^{-1}g_1.$$
Define 

$$g=g_2+(P+\mathcal{V})^{-1}\mathcal{V}_{-,0}\,g_2.$$
Obviously, $g$ satisfies $(P+\mathcal{V})g=0$, and $g_2\leq g$. We are going to show that $g\leq Cg_2$, and for this purpose it is clearly enough to show that there is a constant $C$ such that for every $x\in M$,

\begin{equation}\label{upb}
\int_MG_{P+\mathcal{V}}(x,y)\mathcal{V}_{-,0}(y)g_2(y)\,dy\leq Cg_2(x). 
\end{equation}
Denote $L=P+\mathcal{V}_+-\mathcal{V}_{-,\infty}$. By \cite{P1}, since $\mathcal{V}_{-,0}$ has compact support, there is a constant $C$ such that

$$C^{-1}G_{P+\mathcal{V}}\leq G_{L}  \leq CG_{P+\mathcal{V}}.$$
Consequently,

$$\int_MG_{P+\mathcal{V}}(x,y)\mathcal{V}_{-,0}(y)g_2(y)\,dy\leq C\int_MG_{L}(x,y)\mathcal{V}_{-,0}(y)g_2(y)\,dy.$$
Denote $f(y)=\mathcal{V}_{-,0}(y)g_2(y)$, then $f$ is non-negative and has support included in $\Omega_k$. Notice that $u(x)=\int_MG_{L}(x,y)f(y)\,dy$ is a positive solution of $Lv=f$ on $M$, and

$$u(x)=\lim_{n\to\infty}u_n,$$
with

$$u_n=\int_MG_{L}^{\Omega_n}(x,y)f(y)\,dy.$$
Notice that for $n\geq k$, $u_n$ is solution of $Lu=0$ in $\Omega_n\setminus \Omega_k$, and vanishes identically on the boundary of $\Omega_n$. Therefore, by the Maximum Principle, there exists a constant 

$$C_n=\frac{\sup_{\partial \Omega_k}g_2}{\inf_{\partial \Omega_k}u_n}$$
such that

$$u_n(x)\leq C_ng_2(x),\qquad \forall x\in \Omega_n\setminus \Omega_k.$$
Notice that $(u_n)_{n=0}^\infty$ is increasing, so that $C_n\leq C_{k}$. Letting $n\to\infty$, one deduces that

$$u(x)\leq C_kg_2(x),\qquad \forall x\in \Omega_k^*.$$
Since $u$ and $g_2$ are positive and continuous on $\Omega_k$, one can increase $C_k$ to ensure that

$$u(x)\leq Cg_2(x),\qquad \forall x\in M.$$
Thus, \eqref{upb} holds. This implies that

$$g\sim g_2,$$
and since $g_1\sim h$ and $g_2\sim g_1$, one concludes that

$$g\sim h.$$
In order to conclude the proof of Theorem \ref{pos_sol}, one has to prove that $g$ satisfies the equation

\begin{equation}\label{int}
g(x)=h(x)-\int_MG_P(x,y)\mathcal{V}(y)g(y)dy.
\end{equation}
For $n\geq N$, denote by $h_n$ the restriction of $h$ to $\Omega_n$, and define $g_{1,n}$, $g_{2,n}$ and $g_n$ by

$$g_{1,n}(x)=h_n(x)-\int_{\Omega_n}G_{P+\mathcal{V}_+}^{\Omega_n}(x,y)\mathcal{V}_+(y)h_n(y)\,dy,$$ 

$$g_{2,n}=(I-T_n)^{-1}g_{1,n},$$
where $T_n$ is the operator

$$T_nu(x)=\int_{\Omega_n}G_{P+\mathcal{V}_+}^{\Omega_n}(x,y)\mathcal{V}_{-,\infty}(y)u(y)\,dy,$$
and

$$g_n(x)=g_{2,n}(x)+\int_{\Omega_n}G_{P+\mathcal{V}}^{\Omega_n}(x,y)\mathcal{V}_{-,0}(y)g_{2,n}(y)\,dy.$$
It follows from the hypotheses that there is a constant $C$ such that for every $n\geq N$ and $i=1,2$,

$$C^{-1}h\leq g_{i,n}\leq Ch,$$
and

$$C^{-1}h\leq g_n\leq Ch.$$
Since $\mathcal{V}$ is $(H,h)$-bounded, the Dominated Convergence Theorem implies that pointwise,

$$\lim_{n\to \infty}g_{i,n}=g_i,\,i=1,2,$$
and

$$\lim_{n\to \infty}g_n=g.$$
Define

$$w_n(x)=h_n(x)-\int_{\Omega_n}G_P^{\Omega_n}(x,y)\mathcal{V}(y)g_n(y)dy,$$
and notice that by the fact that $\mathcal{V}$ is $(H,h)$-bounded and the Dominated Convergence Theorem, as $n\to \infty$, $w_n$ converges pointwise to 

$$w(x)=h(x)-\int_MG_P(x,y)\mathcal{V}(y)g(y)dy.$$
One wishes to show that $w=g$. Notice that for every $n\geq N$, $w_n$ and $g_n$ are solutions of the following Dirichlet problem:

$$\left\{\begin{array}{rcl}

Pu&=&-\mathcal{V}g_n\mbox{ in }\Omega_n\\
u|_{\partial\Omega_n}&=&h|_{\partial\Omega_n}

\end{array}\right.$$
The Maximum Principle implies that

$$w_n=g_n.$$
Passing to the limit as $n\to\infty$ gives

$$w=g,$$
that is, \eqref{int}.

\cqfd

\noindent {\em Proof of Lemma \ref{conv}:}

We first prove the inequality

\begin{equation}\label{econv}
g_{t_1}\leq g_{t_0}^{1-\alpha}g_{t_2}^\alpha.
\end{equation}
Let $\{\Omega_n\}_{n=0}^\infty$ be an exhaustion of $M$, and for $t\geq0$, define

$$g_{t,n}(x)=h|_{\Omega_n}(x)-t\int_{M}G^{\Omega_n}_{P+t\mathcal{V}}(x,y)\mathcal{V}(y)h(y)\,dy.$$
Since $\{G^{\Omega_n}_{P+t\mathcal{V}}\}_{n\in\mathbb{N}}$ is non-increasing and converges pointwise to $G_{P+t\mathcal{V}}$ as $n\to \infty$, the Monotone Convergence Theorem implies that $\{g_{t,n}\}_{n=0}^\infty$ converges pointwise to $g_t$ as $n\to\infty$. Let us remark that $g_{t,n}$ is solution of the following Dirichlet problem in $\Omega_n$:

$$\left\{
\begin{array}{rcl}
(P+t\mathcal{V})u&=&0\mbox{ in }\Omega_n, \\
u|_{\partial\Omega_n}&=&h|_{\partial\Omega_n}.
\end{array}\right.$$
Let us consider 

$$u_n=g_{n,t_0}^{1-\alpha}g_{n,t_2}^\alpha.$$
By an easy computation (see the proof of \cite[Theorem 3.1]{P5} or \cite[Lemma 5.1]{DFP}), $u_n$ is a supersolution of $(P+t_1\mathcal{V})$ in $\Omega_n$, and is equal to $h$ on the boundary of $\Omega_n$. Therefore, by the Maximum Principle, 

$$u_n \geq g_{n,t_1},$$
that is

$$g_{n,t_1}\leq g_{n,t_0}^{1-\alpha}g_{n,t_2}^\alpha.$$
Letting $n\to\infty$, one finds \eqref{econv}. Let us show that \eqref{econv} implies that $g_t$ is positive for all $t>0$. Take $0<\varepsilon<||\mathcal{V}||_{H,h}^{-1}$, then 

$$C_\varepsilon h\leq g_\varepsilon\leq h,$$
where $C_\varepsilon=1-\varepsilon||\mathcal{V}||_{H,h}>0$. By \eqref{econv} with $t_0=0$, $t_1=\varepsilon$ and $t_2=t$, one has

$$g_\varepsilon\leq h^{1-\varepsilon/t}g_t^{\varepsilon/t},$$
therefore

$$C_\varepsilon^{t/\varepsilon}h\leq g_t,$$
which implies that $g_t$ is positive. This concludes the proof.

\cqfd

To conclude this section, we give a simple analytic proof of a result of Takeda \cite{T2} and Chen \cite{Ch}; it provides conditions under which the strong subcriticality of $\mathcal{V}_-$ with respect to $P+\mathcal{V}_+$ is equivalent to the subcriticality of $P+\mathcal{V}$:

\begin{Thm}\label{strong_sub}

Let $P$ be subcritical of the form \eqref{Schro}. Assume that $\mathcal{V}$ is a potential such that $\mathcal{V}_-$ is a small perturbation of $P$ and $\mathcal{V}_+$ is $G-$bounded. Then, $P+\mathcal{V}$ is subcritical if and only if $\mathcal{V}_-$ is strongly subcritical with respect to $P+\mathcal{V}_+$.

\end{Thm}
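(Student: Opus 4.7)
The direction ($\Leftarrow$) requires no perturbation hypothesis and is the easier one. If $\mathcal{V}_-$ is strongly subcritical with respect to $P+\mathcal{V}_+$ with constant $\varepsilon\in(0,1]$, then \eqref{scrit} yields the quadratic-form inequality $q_{P+\mathcal{V}}(u) \ge \varepsilon\, q_{P+\mathcal{V}_+}(u) \ge \varepsilon\, q_P(u)$ for all $u\in C_0^\infty(M)$. Since $P$ is subcritical, it admits a positive Hardy-type weight $W$ with $q_P(u) \ge \int_M W u^2\,d\mu$, so $q_{P+\mathcal{V}}(u) \ge \varepsilon \int_M W u^2\,d\mu$, which is exactly the subcriticality of $P+\mathcal{V}$ (with weight $\varepsilon W$).

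For the converse ($\Rightarrow$), the plan is to analyse the generalized principal eigenvalue of the family $\{P+\mathcal{V}_+-\lambda\mathcal{V}_-\}_{\lambda\ge 0}$, namely
$$\lambda_1 := \sup\bigl\{\lambda \ge 0 : P+\mathcal{V}_+-\lambda\mathcal{V}_- \text{ admits a positive solution}\bigr\}.$$
The subcriticality of $P+\mathcal{V} = P+\mathcal{V}_+-1\cdot\mathcal{V}_-$ gives $\lambda_1\ge 1$; the heart of the proof is to upgrade this to the strict inequality $\lambda_1>1$. I would invoke Pinchover's critical/subcritical dichotomy (see \cite{P5}): if $\mathcal{V}_-$ is a small perturbation of the subcritical operator $P+\mathcal{V}_+$, then the supremum $\lambda_1$ is attained and $P+\mathcal{V}_+-\lambda_1\mathcal{V}_-$ is \emph{critical}; hence the case $\lambda_1=1$ would mean $P+\mathcal{V}$ is critical, contradicting the hypothesis, and so $\lambda_1>1$. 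Choosing any $\tilde\lambda\in(1,\lambda_1)$ and using the convex decomposition
$$P+\mathcal{V} = \Bigl(1-\tfrac{1}{\tilde\lambda}\Bigr)(P+\mathcal{V}_+) + \tfrac{1}{\tilde\lambda}\bigl(P+\mathcal{V}_+-\tilde\lambda\mathcal{V}_-\bigr),$$
together with the non-negativity of the second summand (by definition of $\lambda_1>\tilde\lambda$), one concludes $q_{P+\mathcal{V}}(u)\ge (1-1/\tilde\lambda)\,q_{P+\mathcal{V}_+}(u)$, which is precisely the strong subcriticality of $\mathcal{V}_-$ with constant $\varepsilon = 1-1/\tilde\lambda$.

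The main obstacle is verifying the hypothesis of Pinchover's dichotomy: the theorem only gives $\mathcal{V}_-$ as a small perturbation of $P$, whereas we need it to be a small perturbation of $P+\mathcal{V}_+$. To transfer this, the plan is to prove the Green function equivalence $G_P\sim G_{P+\mathcal{V}_+}$. The bound $G_{P+\mathcal{V}_+}\le G_P$ is automatic from $\mathcal{V}_+\ge 0$, so only the matching lower bound is at stake. Starting from the resolvent identity $G_P = G_{P+\mathcal{V}_+} + G_P\mathcal{V}_+ G_{P+\mathcal{V}_+}$ and using the $G$-boundedness of $\mathcal{V}_+$ with constant $C$ one gets $G_P-G_{P+\mathcal{V}_+}\le C G_P$, which closes the argument immediately when $C<1$. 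To handle arbitrary finite $C$, I would partition $\mathcal{V}_+ = \sum_{j=1}^N \mathcal{V}_+/N$ for $N$ large, and iterate the Neumann-type construction of Lemma~\ref{Vsmall} along the chain $P, P+\mathcal{V}_+/N, \ldots, P+\mathcal{V}_+$, tracking how the $G$-bound constant evolves at each step; alternatively, one may invoke Murata's analysis of $G$-bounded perturbations in \cite{M}. Once $G_P\sim G_{P+\mathcal{V}_+}$ is established, the smallness condition \eqref{small} for $\mathcal{V}_-$ carries over verbatim from $P$ to $P+\mathcal{V}_+$, Pinchover's dichotomy applies, and the proof closes as above.
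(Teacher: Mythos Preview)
Your overall strategy is sound and the conclusion is correct, but the route differs from the paper's, and one step in your sketch has a gap worth flagging.

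\medskip

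\textbf{The iteration sketch does not close.} Your plan to prove $G_P\sim G_{P+\mathcal{V}_+}$ by splitting $\mathcal{V}_+$ into $N$ equal pieces and iterating the resolvent estimate runs into the following difficulty. Write $P_j=P+j\mathcal{V}_+/N$. If inductively $G_{P_{j-1}}\ge c_{j-1}G_P$, then the $G$-bound of $\mathcal{V}_+/N$ with respect to $P_{j-1}$ is at most $C/(Nc_{j-1})$, and the resolvent identity yields $c_j=c_{j-1}-C/N$. After $N$ steps this gives $c_N=1-C$, exactly the same obstruction as in the unsplit case. So the naive chain argument does not gain anything. Your fallback, citing Murata \cite{M} or Pinchover \cite{P5} (specifically \cite[Corollary~3.6]{P5}, which handles $G$-bounded non-negative perturbations via a log-convexity argument analogous to Lemma~\ref{conv}), is the right move and should simply replace the iteration.

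\medskip

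\textbf{Comparison with the paper.} The paper proceeds differently: rather than comparing $G_P$ with $G_{P+\mathcal{V}_+}$, it invokes Lemma~\ref{equiv_Green} (again from \cite{P2}, \cite{P5}) to get $G_P\sim G_L$ with $L=P+\mathcal{V}$ itself. This transfers the $(H,h)$-boundedness of $\mathcal{V}_-$ from $P$ to $L$, and then Lemma~\ref{Vsmall} applied to $L$ with the potential $-\varepsilon\mathcal{V}_-$ (for $\varepsilon$ small) produces a positive solution of $(L-\varepsilon\mathcal{V}_-)u=0$; Allegretto--Piepenbrink then gives the quadratic-form inequality directly. Your approach via the generalized principal eigenvalue $\lambda_1$ and the criticality of $P+\mathcal{V}_+-\lambda_1\mathcal{V}_-$ is a legitimate alternative that stays within Pinchover's framework; it is conceptually a bit more indirect (you need the full small-perturbation property of $\mathcal{V}_-$ with respect to $P+\mathcal{V}_+$, whereas the paper only needs $(H,h)$-boundedness with respect to $L$), but it has the virtue of making the role of the critical threshold $\lambda_1$ explicit.
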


\begin{proof}

The proof relies on the following:

\begin{Lem}\label{equiv_Green}

Let $\mathcal{V}_-$ be a small perturbation of $P$ and $\mathcal{V}_+$ is $G-$bounded with respect to $P$, and assume that $L=P+\mathcal{V}$ is subcritical. Then the Green function $G_P$ of $P$ is equivalent to the Green function $G_L$ of $L$.

\end{Lem}
Lemma \ref{equiv_Green} follows from \cite[Lemma 2.4]{P2} and \cite[Corollary 3.6]{P5}. Let $h$ be a positive solution of $Pu=0$. Since $\mathcal{V}_-$ is a small perturbation of $P$, in particular it is $(H,h)$-bounded:

$$\sup_{x\in M}\int_MG_P(x,y)\mathcal{V}_-(y)h(y)\,dy<\infty.$$
By Lemma \ref{equiv_Green}, it follows that

$$\sup_{x\in M}\int_MG_L(x,y)\mathcal{V}_-(y)h)(y)\,dy<\infty.$$
According to Lemma \ref{Vsmall}, for $\varepsilon<||\mathcal{V}||_{H,\mathbf{1}}^{-1}$, there is $g\sim h$ (in particular, positive) solution of $(L-\varepsilon \mathcal{V}_-)u=0$. By the Allegretto-Piepenbrink Theorem, it follows that $L-\varepsilon \mathcal{V}_-$ is non-negative, i.e. the inequality

$$\varepsilon\int_M\mathcal{V}_-u^2\leq q(u)+\mathcal{V}_+u^2-\mathcal{V}_-u^2,\qquad \forall u\in C_0^\infty(M),$$
is satisfied, where $q$ is the quadratic form of $P$. Equivalently,

$$\int_M\mathcal{V}_-u^2\leq (1+\varepsilon)^{-1}\left\{q(u)+\mathcal{V}_+u^2\right\},\qquad \forall u\in C_0^\infty(M).$$
This shows that $\mathcal{V}_-$ is strongly subcritical.

\end{proof}

\section{Heat kernel estimates}

In the section, we give consequences of Theorem \ref{pos_sol} for the heat kernel of $\Delta+\mathcal{V}$. Let us denote by $p_t^{\mathcal{V}}$ the heat kernel of $\Delta+\mathcal{V}$. We introduce the Gaussian upper-estimate for $p_t^{\mathcal{V}}(x,y)$:

\begin{equation}\label{UEV}\tag{$UE_{\mathcal{V}}$}
|p_t^{\mathcal{V}}(x,y)|\leq \frac{C}{V(x,\sqrt{t})}e^{-c\frac{d^2(x,y)}{t}},\qquad \forall (x,y)\in M^2,\,\forall t>0.
\end{equation}
We also introduce the two-sided Gaussian (Li-Yau) estimates for $p_t^{\mathcal{V}}(x,y)$: for every $(x,y)\in M^2$ and $t>0$,

\begin{equation}\label{LYV}\tag{$LY_{\mathcal{V}}$}
\frac{C_1}{V(x,\sqrt{t})}e^{-c_1\frac{d^2(x,y)}{t}}\leq p_t^{\mathcal{V}}(x,y)\leq \frac{C_2}{V(x,\sqrt{t})}e^{-c_2\frac{d^2(x,y)}{t}}.
\end{equation}

\begin{Thm}\label{heat_kernel}

Let $(M,\mu)$ be a non-parabolic weighted manifold with heat kernel satisfying \eqref{D} and \eqref{UE}. Let $\mathcal{V}$ be a subcritical potential, such that $\mathcal{V}_-$ satisfies the condition $(K^\infty,1)$ and $\mathcal{V}_+$  is $(H,\mathbf{1})-$bounded. Then the Gaussian upper-estimate \eqref{UEV} for the heat kernel of $\Delta_\mu+\mathcal{V}$ holds. If moreover the scaled Poincar\'{e} inequalities \eqref{P} are satisfied, then the full Li-Yau estimates \eqref{LYV} hold for the heat kernel of $\Delta_\mu+V$.

\end{Thm}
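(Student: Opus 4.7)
The strategy is to reduce the Schrödinger operator $\Delta+\mathcal{V}$ to a weighted Laplacian on $M$ via the $h$-transform, and then invoke the known equivalences between Gaussian heat kernel estimates and geometric/functional inequalities (which are all stable under bounded changes of the underlying measure).

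\textbf{Step 1: Existence of a bounded positive solution.} Since $M$ is non-parabolic, $\Delta$ is subcritical, so I apply Theorem~\ref{pos_sol} with $P=\Delta$ and $h\equiv\mathbf{1}$. The hypotheses on $\mathcal{V}_\pm$ are exactly what Theorem~\ref{pos_sol} requires, plus the subcriticality of $\Delta+\mathcal{V}$. The theorem produces a function $g\sim\mathbf{1}$ with $(\Delta+\mathcal{V})g=0$ (elliptic regularity makes $g$ smooth enough for what follows, since $\mathcal{V}\in L^q_{loc}$ for $q>n/2$).

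\textbf{Step 2: $g$-transform to a weighted Laplacian.} Using \eqref{htransform}--\eqref{Ph} with this $g$, the operator $L:=(\Delta+\mathcal{V})_g$ has the divergence form
\[
L u = -\tfrac{1}{g^2}\div(g^2\nabla u),
\]
which is precisely the Laplacian of the weighted manifold $(M,\tilde{\mu})$ with $d\tilde{\mu}=g^2\,d\mu$. Since $L$ and $\Delta+\mathcal{V}$ are unitarily equivalent via $T_g$, their heat kernels are related by
\[
p_t^{\mathcal{V}}(x,y)=g(x)g(y)\,\tilde{p}_t(x,y),
\]
where $\tilde{p}_t$ is the heat kernel of $L$ on $(M,\tilde{\mu})$. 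Because $g\sim\mathbf{1}$, it suffices to prove \eqref{UE} (respectively \eqref{LY}) for $\tilde{p}_t$ on $(M,\tilde{\mu})$.

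\textbf{Step 3: Transfer of the geometric/functional hypotheses.} The key observation is that $d\tilde{\mu}=g^2\,d\mu$ is comparable to $d\mu$ with constants depending only on the $L^\infty$ bounds on $g$ and $1/g$. Consequently:
\begin{itemize}
\item The volumes satisfy $\tilde{V}(x,r)\sim V(x,r)$, so the doubling condition \eqref{D} for $\mu$ is inherited by $\tilde{\mu}$.
\item The first Dirichlet eigenvalue $\tilde{\lambda}_1(\Omega)$ of $L$ on any open set is comparable to $\lambda_1(\Omega)$ for $\Delta_\mu$, and $\tilde{\mu}(\Omega)\sim\mu(\Omega)$. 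Thus the relative Faber--Krahn inequality \eqref{RFK} for $(M,\mu)$ passes to $(M,\tilde{\mu})$; by the characterization of \cite{G4}, this gives \eqref{UE} for $\tilde{p}_t$.
\item If \eqref{P} is also assumed for $\mu$, the same computation with averages and Dirichlet energies being distorted by bounded factors yields \eqref{P} for $\tilde{\mu}$. Together with \eqref{D}, this gives \eqref{LY} for $\tilde{p}_t$ by Grigor'yan--Saloff-Coste.
\end{itemize}

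\textbf{Step 4: Conclusion.} Plugging the estimates for $\tilde{p}_t$ into $p_t^{\mathcal{V}}(x,y)=g(x)g(y)\tilde{p}_t(x,y)$ and using $g\sim\mathbf{1}$ together with $\tilde{V}(x,\sqrt{t})\sim V(x,\sqrt{t})$ yields \eqref{UEV}, and under the extra Poincaré assumption, \eqref{LYV}.

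The main conceptual point — and the only subtlety — is that Theorem~\ref{pos_sol} is exactly strong enough to produce a \emph{two-sided} bounded positive solution, so that the $g$-transform is a bounded perturbation of the measure. Everything else is then the purely geometric stability of \eqref{D}, \eqref{UE}, \eqref{P} (equivalently \eqref{RFK}) under such a perturbation. The only step requiring a bit of care is verifying the stability of \eqref{RFK} (and hence \eqref{UE}) under the passage from $\mu$ to $g^2\mu$, but this is immediate from the variational characterization of $\lambda_1$.
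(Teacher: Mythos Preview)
Your proof is correct and follows essentially the same approach as the paper: apply Theorem~\ref{pos_sol} to obtain a positive solution $g\sim\mathbf{1}$, pass to the weighted Laplacian $\Delta_{g^2\mu}$ via the $h$-transform, and then transfer \eqref{RFK} (hence \eqref{UE}) and, under \eqref{P}, the full Li--Yau characterization to the new measure $g^2\mu$ using $g\sim\mathbf{1}$. The paper's proof is slightly more terse but the logical structure, the key lemma invoked, and the cited results (\cite{G4} for \eqref{RFK}$\Leftrightarrow$\eqref{UE}, Grigor'yan--Saloff-Coste for \eqref{LY}) are identical to yours.
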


\begin{Rem}
{\em 

In fact, using domination theory, it will be apparent from the proof that Theorem \ref{heat_kernel} holds under the following weaker assumption on $\mathcal{V}_+$: it is enough to assume that $\mathcal{V}_+\geq W$ for some $(H,\mathbf{1})-$bounded potential $W$, such that $W-\mathcal{V}_-$ is subcritical. In the forthcoming paper \cite{CDS}, under the stronger assumption that $\mathcal{V}_-$ is strongly subcritical, Theorem \ref{heat_kernel} is proved with no assumption on $\mathcal{V}_+$. However, we do not know if the subcriticality of $\mathcal{V}$ is enough in general.

}
\end{Rem}
In the case where \eqref{P} is satisfied, Theorem \ref{heat_kernel} has been proved in \cite[Theorem 2]{T1} under the stronger assumptions that $\mathcal{V}_-\in K^\infty(M)$ and is strongly subcritical with respect to $\Delta+\mathcal{V}_+$. As we have already mentionned for Theorem \ref{pos_sol}, the result of Theorem \ref{heat_kernel} holds under much weaker local regularity hypotheses on $\mathcal{V}$, but we will not pursue this here. Let us also mention that under \eqref{P}, Theorem \ref{heat_kernel} has first been proved in \cite[Theorem 10.5]{G} under the assumption that $\mathcal{V}\geq0$. Also, the idea that Theorem \ref{heat_kernel} follows from Theorem \ref{pos_sol} originally comes from \cite{G}; the proof uses a very useful device traditionally called the {\em Doob transform} or {\em h-transform} with respect to a positive solution $h$ of $(\Delta+\mathcal{V})h=0$, and that allows one to pass from a Schr\"{o}dinger operator $\Delta+\mathcal{V}$ to the weighted Laplacian $\Delta_{h^2\mu}$. In our proof of Theorem \ref{heat_kernel}, we will use the same idea, and indeed, the only new element in our proof of Theorem \ref{heat_kernel} is that one can treat upper-bounds of the heat kernel {\em only}, using the caracterization of the Gaussian upper-estimates for a (weighted) Laplacian in term of relative Faber-Krahn inequalities.

\medskip

\noindent{\em Proof of Theorem \ref{heat_kernel}: }let $P=\Delta+\mathcal{V}$. By Theorem \ref{pos_sol}, there exists $h\sim 1$ solution of $Pu=0$. Let us consider the $h$-transform $P_h$:

$$P_h=h^{-1}(\Delta+\mathcal{V})h,$$
which is self-adjoint on $L^2(\Omega,h^2\,d\mu)$. By \eqref{Ph}, the operator $P_h$ is nothing but the weighted Laplacian $\Delta_{h^2\mu}$. Furthermore, the heat kernel $p_t^h(x,y)$ of $P_h$ on $L^2(\Omega,h^2\, d\mu)$ is given by

$$p_t^h(x,y)=\frac{p_t^{\mathcal{V}}(x,y)}{h(x)h(y)}.$$
Since $h\sim 1$, it is thus enough to prove the Gaussian upper-estimates (resp. the two-sided Gaussian estimates) for $p^h_t(x,y)$. Let us start by proving the upper-bound. Since \eqref{D} and \eqref{UE} hold, $(M,\mu)$ satisfies the  relative Faber-Krahn inequalities \eqref{RFK}. Since $h\sim 1$, it follows that $(M,h^2\mu)$ also satisfies the relative Faber-Krahn inequality \eqref{RFK}. Therefore, by \cite{G4}, we conclude that the heat kernel of $P_h=\Delta_{h^2\mu}$ has Gaussian upper-estimates. The case where $(M,\mu)$ satisfies the scaled $L^2$ Poincar\'{e} inequalities \eqref{P} follows the same idea: according to the work of Grigor'yan and Saloff-Coste (see e.g. \cite[Theorem 5.4.12]{SC}), the two-sided Gaussian estimates for $p_t^h(x,y)$ are equivalent to doubling together with the scaled $L^2$ Poincar\'{e} inequalities, both for the measure $h^2\mu$. But since $h\sim 1$, these are a direct consequence of the corresponding inequalities \eqref{P} and \eqref{D} for the measure $\mu$. This concludes the proof of Theorem \ref{heat_kernel}.

\cqfd
Upper-estimates for the heat kernel of Schr\"{o}dinger operators have consequences for the boundedness of the Riesz transform, as we explain now. Ten years ago, it has been discovered by Coulhon and Duong \cite{CDu2} (see also Sikora \cite{Sik}) that Gaussian estimates for the heat kernel of the Hodge-De Rham Laplacian $\vec{\Delta}=dd^\star+d^\star d$ acting on $1-$forms has consequences for the boundedness on $L^p$, $p\in (2,\infty)$ of the Riesz transform $d\Delta^{-1/2}$. More precisely, they show that if $M$ satisfies \eqref{D} and \eqref{UE}, and if the heat kernel of $\vec{\Delta}$ acting on $1$-forms has Gaussian estimates, then the Riesz transform $d\Delta^{-1/2}$ is bounded on $L^p$, for all $p\in (1,\infty)$. The Bochner formula asserts that $\vec{\Delta}$ acting on $1-$forms can be written as

$$\vec{\Delta}=-\nabla^*\nabla +\mathrm{Ric},$$
where for every $x\in M$, $\mathrm{Ric}_x$ is an symmetric endomorphism of the fiber $T^*_x M$, canonically associated with the Ricci curvature. Define a potential $\mathcal{V}$ by the requirement that $\mathcal{V}(x)$ is the lowest eigenvalue of $\mathrm{Ric}_x$. As a consequence of the domination theory, for every $t>0$ and $x,y\in M$,

$$||e^{-t\vec{\Delta}}(x,y)||\leq |e^{-t(\Delta+\mathcal{V})}(x,y)|.$$
Therefore, if the heat kernel of $\Delta+\mathcal{V}$ has Gaussian estimates, so does the heat kernel of $\vec{\Delta}$. This has been used in \cite{CZ} to prove that if $M$ satisfies \eqref{D} and \eqref{UE}, $\mathcal{V}_-$ is strongly positive with respect to $\Delta$ and, for $\delta>0$ small enough,

\begin{equation}\label{int_heat}
\sup_{x\in M}\int_0^\infty \int_M \frac{1}{V(x,\sqrt{t})}e^{-\frac{d^2(x,y)}{t}}\mathcal{V}_-(y)dydt<\delta,
\end{equation}
then $e^{-t\vec{\Delta}}$ has Gaussian estimates and the Riesz transform is bounded on $L^p$ for every $p\in (1,\infty)$. Notice that \eqref{int_heat} is closely related to the validity of

\begin{equation}\label{int_G}
\sup_{x\in M}\int_M G(x,y)\mathcal{V}_-(y)dy<\varepsilon,
\end{equation}
for $\varepsilon$ small enough (indeed, if $M$ satisfies the Poincar\'{e} inequalities \eqref{P}, then \eqref{int_heat} for small $\delta$ is equivalent to \eqref{int_G} for small $\varepsilon$). Under \eqref{P}, Takeda \cite[Theorem 2]{T1} proves a far better result: in order that $e^{-t\vec{\Delta}}$ has Gaussian estimates, it is enough for $\mathcal{V}_-$ to be strongly positive and in the Kato class at infinity $K^\infty(M)$, and for $\mathcal{V}_+$ to be $(H,\mathbf{1})$-bounded. That is, the smallness of $\mathcal{V}$ in an integral sense is required only at infinity, and not globally as in \eqref{int_heat} or \eqref{int_G}. As a consequence of Theorem \ref{heat_kernel}, we can get rid of the extra hypothesis \eqref{P}:

\begin{Cor}\label{Riesz_small}

Let $M$ be a non-parabolic Riemannian manifold, endowed with its Riemannian measure, satisfying \eqref{D} and \eqref{UE}. Let $\mathcal{V}(x)$ be the lowest eigenvalue of $\mathrm{Ric}_x$; assume that $\mathcal{V}_-$ satisfies the condition $(K^\infty,1)$, $\mathcal{V}_+$ is $(H,\mathbf{1})$-bounded, and $\Delta-\mathcal{V}_-$ is subcritical. Then the Riesz transform on $M$ is bounded on $L^p$ for every $p\in (1,\infty)$.

\end{Cor}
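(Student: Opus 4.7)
The strategy is to reduce the problem to an application of the Coulhon--Duong theorem \cite{CDu2}, by establishing Gaussian upper estimates for the heat semigroup of the Hodge--de Rham Laplacian $\vec{\Delta}$ acting on $1$-forms, and then passing to the Riesz transform on functions.

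First I would apply Theorem \ref{heat_kernel} to the Schr\"{o}dinger operator $\Delta+\mathcal{V}$. The hypotheses on doubling, \eqref{UE}, non-parabolicity, and the two smallness conditions on $\mathcal{V}_{-}$ and $\mathcal{V}_{+}$ are all given by the statement of the corollary, so the only item still to check is the subcriticality of $\mathcal{V}$, i.e.\ of $\Delta+\mathcal{V}$. Since $\mathcal{V}=\mathcal{V}_{+}-\mathcal{V}_{-}\geq -\mathcal{V}_{-}$ pointwise, the quadratic form of $\Delta+\mathcal{V}$ dominates that of $\Delta-\mathcal{V}_{-}$; as subcriticality is equivalent to the existence of a positive Hardy weight and such weights are preserved under monotone increase of the form, the subcriticality of $\Delta-\mathcal{V}_{-}$ (given in the hypotheses) transfers directly to $\Delta+\mathcal{V}$. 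Theorem \ref{heat_kernel} then yields the Gaussian upper estimate \eqref{UEV} for the heat kernel $p_{t}^{\mathcal{V}}$ of $\Delta+\mathcal{V}$.

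Next I would invoke the Bochner formula $\vec{\Delta}=\nabla^{*}\nabla+\mathrm{Ric}$ on $1$-forms, together with the pointwise lower bound $\langle \mathrm{Ric}_{x}\omega,\omega\rangle\geq \mathcal{V}(x)\,|\omega|^{2}$, which is the very definition of $\mathcal{V}$. By the Hess--Schrader--Uhlenbrock domination principle (equivalently, the refined Kato inequality applied to the Feynman--Kac representation of $e^{-t\vec{\Delta}}$) one obtains the pointwise kernel comparison
\[
\bigl\|e^{-t\vec{\Delta}}(x,y)\bigr\|\leq \bigl|p_{t}^{\mathcal{V}}(x,y)\bigr|,\qquad\forall x,y\in M,\ \forall t>0.
\]
Combined with the previous step, this gives Gaussian upper estimates for the heat kernel of $\vec{\Delta}$ on $1$-forms.

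Finally, the Coulhon--Duong theorem \cite{CDu2}, applicable because $M$ satisfies \eqref{D} and \eqref{UE} for functions and $\vec{\Delta}$ now enjoys Gaussian heat kernel estimates, yields boundedness of the Riesz transform $d\Delta^{-1/2}$ on $L^{p}(M)$ for every $p\in(1,\infty)$. The real work has already been done inside Theorem \ref{heat_kernel}; the main conceptual obstacle in this short argument is simply matching the hypothesis ``$\Delta-\mathcal{V}_{-}$ subcritical'' of the corollary with the hypothesis ``$\mathcal{V}$ subcritical'' of Theorem \ref{heat_kernel}, which is handled by the monotonicity argument above. The rest is a direct concatenation of standard tools (domination, Bochner, Coulhon--Duong).
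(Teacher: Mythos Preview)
Your proposal is correct and follows essentially the same route as the paper: apply Theorem \ref{heat_kernel} to obtain Gaussian estimates for $e^{-t(\Delta+\mathcal{V})}$, transfer these to $e^{-t\vec{\Delta}}$ via Bochner plus domination, and conclude by Coulhon--Duong. The only point you had to supply beyond the paper's narrative is the passage from the hypothesis ``$\Delta-\mathcal{V}_-$ subcritical'' to ``$\Delta+\mathcal{V}$ subcritical'', and your monotonicity argument (adding the non-negative potential $\mathcal{V}_+$ to the subcritical operator $\Delta-\mathcal{V}_-$) handles this correctly.
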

This greatly improves on \cite[Corollary 3.1]{CZ}. In light of \cite[Theorem 4]{D2}, it is natural to make the following conjecture:

\begin{Conj}\label{conj_riesz}

Let $M$ be a non-parabolic Riemannian manifold, endowed with its Riemannian measure, satisfying \eqref{D} and \eqref{UE} such that $\vec{\Delta}$ is strongly positive, and $|\mathrm{Ric}_-|$ satisfies $(K^\infty,1)$. Then the heat kernel of the Hodge Laplacian has Gaussian estimates, and the Riesz transform is bounded on $L^p$ for every $p\in (1,\infty)$.

\end{Conj}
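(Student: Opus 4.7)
The natural strategy is to reduce the statement to the scalar theory of Theorem~\ref{heat_kernel}. Setting $\mathcal{V}(x)=\lambda_{\min}(\mathrm{Ric}_x)$, so that $\mathcal{V}_-=|\mathrm{Ric}_-|$, the pointwise domination $\|e^{-t\vec{\Delta}}(x,y)\|\le e^{-t(\Delta+\mathcal{V})}(x,y)$ recalled just after Theorem~\ref{heat_kernel} reduces the problem to establishing Gaussian upper bounds for the heat kernel of the scalar Schr\"{o}dinger operator $\Delta+\mathcal{V}$. The assumption $|\mathrm{Ric}_-|\in(K^\infty,1)$ is exactly the Kato-at-infinity control on $\mathcal{V}_-$ required by Theorem~\ref{heat_kernel}. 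Since nothing is assumed about $\mathcal{V}_+$, I would invoke the remark following Theorem~\ref{heat_kernel} with the choice $W\equiv 0$, which is admissible as soon as $\Delta-\mathcal{V}_-$ is subcritical.

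The crux of the argument is therefore to deduce subcriticality of $\Delta-\mathcal{V}_-$ from strong positivity of $\vec{\Delta}$ together with $\mathcal{V}_-\in K^\infty$. The plan is to use Kato's refined inequality $|\nabla|\omega||^2\le|\nabla\omega|^2$ to transfer the $1$-form quadratic-form inequality into a scalar one. For a test function $u\in C_0^\infty(M)$, pick a unit $1$-form $\sigma$ on $\mathrm{supp}(u)$ with $(\mathrm{Ric}\,\sigma,\sigma)=\mathcal{V}$ pointwise and set $\omega=u\sigma$; then $|\omega|=u$, $|\nabla\omega|^2=|\nabla u|^2+u^2|\nabla\sigma|^2$, and $(\mathrm{Ric}_\pm\omega,\omega)=\mathcal{V}_\pm u^2$, so that strong positivity of $\vec{\Delta}$ yields
\[
\int_M\mathcal{V}_-u^2\,d\mu\le(1-\varepsilon)\int_M\bigl(|\nabla u|^2+u^2|\nabla\sigma|^2+\mathcal{V}_+u^2\bigr)\,d\mu.
\]
The main obstacle is the spurious curvature term $u^2|\nabla\sigma|^2$, together with the fact that a smooth minimum-eigenvector field $\sigma$ typically does not exist globally. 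I would try to bypass this by splitting $M$ into a compact core $\Omega_k$ and an exterior $\Omega_k^*$ on which $|\mathrm{Ric}_-|$ is small in Kato norm. On the exterior, $\sigma$ should be chosen ``approximately parallel'' at the relevant scales so that $|\nabla\sigma|^2$ is controlled by a small multiple of $\mathcal{V}_-+\mathcal{V}_+$ and can be absorbed into the right-hand side; on the core, one would appeal to the Allegretto--Piepenbrink correspondence together with the fact that a $K^\infty$ potential is a small perturbation of $\Delta$ in Pinchover's sense, preserving subcriticality after gluing.

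Once subcriticality of $\Delta-\mathcal{V}_-$ is in hand, Theorem~\ref{heat_kernel} delivers Gaussian estimates on the heat kernel of $\Delta+\mathcal{V}$, domination transfers them to $e^{-t\vec{\Delta}}$, and $L^p$-boundedness of $d\Delta^{-1/2}$ for $p\in(2,\infty)$ follows from Coulhon--Duong~\cite{CDu2}, while the range $p\in(1,2]$ is standard under $(D)$ and $(UE)$. I expect the genuine difficulty of the conjecture to lie in making $\sigma$ controllable at infinity in such a way that $|\nabla\sigma|^2$ is absorbed into the Kato-at-infinity smallness of $|\mathrm{Ric}_-|$ rather than into the form domain of $\Delta$; without this, the Kato-type computation above only gives strong positivity of the scalar form up to a curvature error that is not of lower order with respect to $\Delta$, which falls short of subcriticality.
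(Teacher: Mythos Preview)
This statement is presented in the paper as a \emph{Conjecture}, not as a theorem; the paper gives no proof of it and simply records in the subsequent Remark that it has since been established in the forthcoming work~\cite{CDS}. There is therefore no proof in the paper to compare your proposal against.

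As for the proposal itself, you correctly identify the reduction scheme (domination to a scalar Schr\"{o}dinger operator, then Coulhon--Duong), and you are candid that the crux is unresolved. The gap you name is genuine and, in the framework you set up, not fixable: the minimum-eigenvector field $\sigma$ is in general not even continuous (eigenvalue crossings), and where it is smooth the term $u^2|\nabla\sigma|^2$ is of the same order as $|\nabla u|^2$ with no smallness, so it cannot be absorbed by Kato-at-infinity control on $\mathcal{V}_-$. More fundamentally, your strategy attempts to deduce subcriticality of the \emph{scalar} operator $\Delta-\mathcal{V}_-$ from strong positivity of the \emph{vector} operator $\vec{\Delta}$, and there is no reason to expect this implication to hold; the Kato inequality $|\nabla|\omega||\le |\nabla\omega|$ transfers information in the opposite direction. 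Reducing to the Remark after Theorem~\ref{heat_kernel} with $W\equiv 0$ thus asks for more than the hypotheses give. The point of the conjecture is precisely that one should be able to obtain Gaussian bounds for $e^{-t\vec{\Delta}}$ \emph{without} passing through a scalar comparison operator, working instead directly with the strong positivity of $\vec{\Delta}$ on $1$-forms; this is what requires the new input announced in~\cite{CDS}.
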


\begin{Rem}
{\em
As this article was being written, Conjecture \ref{conj_riesz} has been proved in the forthcoming work \cite{CDS}.
}
\end{Rem}

\section{A criterion for $p$-hyperbolicity}

As we have already mentioned in the Preliminaries, the $p$-hyperbolicity of a manifold is tightly related to its volume growth. The aim of this section is to prove the following characterization of $p$-hyperbolicity in term of volume growth:

\begin{Thm}\label{car_hyp}

Let $M$ be a manifold satisfying \eqref{UE} and \eqref{D}, and let $p_0\in (2,\infty]$. The following are equivalent:

\begin{enumerate}

\item[(i)] For all $p\in (1,p_0)$, $M$ is $p$-hyperbolic.

\item[(ii)] For all $p\in (1,p_0)$, and for some (all) point $x_0\in M$, there is a constant $C=C(x_0,p)$ such that for all $t\geq 1$,

$$V(x_0,t)\geq Ct^p.$$

\end{enumerate}

\end{Thm}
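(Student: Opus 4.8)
The plan is to prove the two implications separately, using the characterization of $p$-hyperbolicity via the capacity/Green function and the volume condition \eqref{Vol_p}.

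\emph{(i) $\Rightarrow$ (ii).} Suppose $M$ is $p$-hyperbolic for all $p\in(1,p_0)$. Fix such a $p$ and pick any $q\in(p,p_0)$; then $M$ is also $q$-hyperbolic. By \cite[Corollary 3.2]{CHSC}, $q$-hyperbolicity forces the integral condition \eqref{Vol_p} with exponent $q$:
$$\int_1^\infty\left(\frac{t}{V(x_0,t)}\right)^{1/(q-1)}\frac{dt}{t}\cdot t <\infty,$$
i.e. $\int_1^\infty\big(t/V(x_0,t)\big)^{1/q-1}\,dt<\infty$. I would extract from this convergence a lower bound on $V(x_0,t)$. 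Indeed, since $V(x_0,\cdot)$ is non-decreasing, the convergence of the integral near infinity forces $\big(t/V(x_0,t)\big)^{1/q-1}$ to be small for large $t$ in an averaged sense; combined with monotonicity of $V$ and with the doubling property \eqref{D} (which controls the ratio $V(x_0,2t)/V(x_0,t)$ from above, hence prevents $V$ from jumping), one deduces $V(x_0,t)\geq c\,t^{q}$ for $t\geq 1$. Since $q>p$, a fortiori $V(x_0,t)\geq C t^p$ for $t\geq 1$ (absorbing constants, using $t\geq 1$). The passage from ``$x_0$'' to ``all $x$'' is immediate from doubling. The technical heart here is the elementary real-analysis lemma: if $f$ is non-decreasing, doubling-controlled, and $\int_1^\infty (t/f(t))^{1/q-1}\,dt<\infty$, then $f(t)\gtrsim t^q$; I expect this to be the routine-but-careful step.

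\emph{(ii) $\Rightarrow$ (i).} Suppose $V(x_0,t)\geq Ct^p$ for all $t\geq 1$ and all $p\in(1,p_0)$. Fix $p\in(1,p_0)$ and pick $p<q<p_0$, so $V(x_0,t)\geq C t^q$ for $t\geq 1$. I want to show $M$ is $p$-hyperbolic, i.e. exhibit a relatively compact open set with positive $p$-capacity, equivalently produce an admissible test function with finite nonzero $p$-Dirichlet energy, equivalently (via \eqref{Hardy}) produce a positive $\rho$ with the $L^p$ Hardy inequality. The natural route: use the volume lower bound $V(x_0,t)\geq Ct^q$ together with \eqref{UE} (and \eqref{D}) to get a pointwise \emph{upper} bound on the $p$-Green-function-type kernel, or more directly to estimate the $p$-capacity of an annulus. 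Concretely, for a ball $B(x_0,R)$, one can build the radial cutoff $u_R$ that is $1$ on $B(x_0,1)$, $0$ outside $B(x_0,R)$, and interpolates so that $\int|\nabla u_R|^p\,d\mu$ is comparable to a sum $\sum_k 2^{-kp}\,V(x_0,2^k)^{-p+1}\cdot(\text{something})$; using the volume growth $V(x_0,2^k)\gtrsim 2^{kq}$ with $q>p$ makes the corresponding series $\sum_k 2^{k(1-q)\cdot\frac{1}{p-1}}$-type expression converge, so $\mathrm{Cap}_p(B(x_0,1))>0$, giving $p$-hyperbolicity. Alternatively, and perhaps more cleanly given the hypotheses at hand, invoke that under \eqref{D} and \eqref{UE} the heat kernel bound transfers to a bound on $\Delta_p$-capacity via the standard comparison between parabolicity and the volume integral test — i.e. use the sufficiency direction of the volume test, which holds here because \eqref{D} and \eqref{UE} are assumed (the excerpt notes that under \eqref{D} and \eqref{UE} and \eqref{P_loc} the test is essentially sharp, and for the \emph{sufficiency} of a polynomial lower bound one only needs \eqref{D} and \eqref{UE}).

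\emph{Main obstacle.} The genuinely delicate point is the sufficiency direction (ii)$\Rightarrow$(i): in general \eqref{Vol_p} alone does \emph{not} imply $p$-hyperbolicity, so one must really use \eqref{UE} (and \eqref{D}). I would handle this by the capacity-of-annuli computation above, being careful that the telescoping estimate of $\int|\nabla u_R|^p$ uses \eqref{D} to pass from volumes of balls to volumes of annuli and uses the \emph{strict} inequality $q>p$ to sum the geometric series; letting $R\to\infty$ then shows $\mathrm{Cap}_p(B(x_0,1))$ stays bounded below by a positive constant, hence $M$ is $p$-hyperbolic. The other steps — the real-variable lemma in (i)$\Rightarrow$(ii) and the harmless $x_0$-to-all-$x$ upgrade — are routine.
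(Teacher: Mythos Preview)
Your direction (i)$\Rightarrow$(ii) is essentially fine; the real-variable lemma you sketch (integrability of $(t/V)^{1/(q-1)}$ plus monotonicity and doubling forces $V\gtrsim t^q$) is correct and is close in spirit to what the paper does, though the paper routes through the auxiliary condition $\int_1^\infty V(x_0,t)^{-1/p}\,dt<\infty$ via H\"older.

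The genuine gap is in (ii)$\Rightarrow$(i). Your ``capacity-of-annuli'' argument produces a specific test function $u_R$ and estimates $\int|\nabla u_R|^p$; but a test function gives only an \emph{upper} bound on $\mathrm{Cap}_p(B(x_0,1))$, whereas $p$-hyperbolicity requires a \emph{lower} bound. Convergence of your telescoping series merely shows $\mathrm{Cap}_p(B(x_0,1))\leq C<\infty$, which is vacuous. This is exactly why the paper stresses that \eqref{Vol_p} alone is not sufficient for $p$-hyperbolicity: one cannot get the lower bound by building cutoffs. The paper's mechanism is quite different and is precisely where \eqref{UE} enters: first, the Gaussian upper bound gives $\|e^{-t\Delta}\|_{L^p\to L^\infty(x)}\lesssim V(x,\sqrt{t})^{-1/p}$, and the volume lower bound then yields $\Delta^{-1/2}:L^p\to L^p_{loc}$ (Lemma~\ref{Delta_hyp}); second, since $p>2$ one has $p'\in(1,2]$, so by Coulhon--Duong \cite{CDu} (which uses \eqref{D} and \eqref{UE}) the Riesz transform is bounded on $L^{p'}$, and by duality $\|\Delta^{1/2}u\|_p\lesssim\|\nabla u\|_p$. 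Combining these two facts gives $\|u\|_{L^p(B)}\lesssim\|\nabla u\|_p$, which \emph{is} a lower bound on $p$-capacity, hence $p$-hyperbolicity (Proposition~\ref{D_p_hyp}). Your proposal does not identify this Riesz-transform/duality step, and without it there is no route from \eqref{UE} to a capacity lower bound.
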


\begin{Rem}
{\em 
Notice that in contrast to \cite{H} or \cite[Proposition 3.4]{CHSC}, no {\em global} Poincar\'{e}-type inequality is required. Furthermore, the volume growth condition is particularly simple. The drawback is that the above equivalence may not hold at the boundary of the interval, i.e. for $p_0$ itself. However, in many examples, the set of $p$'s such that $M$ is $p$-hyperbolic is an {\em open} set, so it does not seem to be a too serious restriction.
}
\end{Rem}

\begin{Cor}

Let $M$ be a manifold satisfying \eqref{UE} and \eqref{D}, and $(P)_{loc}$, and assume that $\kappa$, the parabolic dimension of $M$, satisfies $\kappa>2$. Then $\kappa$ is the supremum of $p$'s having the following property: for some (all) $x_0$ in $M$, there is a constant $C=C(p,x_0)$ such that, for all $t\geq1$,

$$V(x_0,t)\geq C(p,x_0)t^p.$$

\end{Cor}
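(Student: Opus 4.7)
Let $\lambda(x_0) := \sup\{p > 1 : \text{there exists } C = C(p,x_0) > 0 \text{ with } V(x_0,t) \geq C t^p \text{ for all } t \geq 1\}$. The claim is that $\kappa = \lambda(x_0)$ for every (hence some) $x_0 \in M$, and this is essentially a direct bookkeeping application of Theorem \ref{car_hyp} once we use the observation from \cite[p.1152-1153]{CHSC} (invoked above Lemma \ref{para_exp}) that under \eqref{P_loc}, $\mathfrak{I}$ is an interval of the form $[\kappa,\infty)$ or $(\kappa,\infty)$.

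For the inequality $\lambda \geq \kappa$, since by hypothesis $\kappa > 2$, we may apply Theorem \ref{car_hyp} with $p_0 = \kappa$. By definition of $\kappa$ as the infimum of $\mathfrak{I}$, every $p \in (1,\kappa)$ lies outside $\mathfrak{I}$, i.e.\ $M$ is $p$-hyperbolic. Hence condition (i) of Theorem \ref{car_hyp} is satisfied, so condition (ii) also is: for every $p \in (1,\kappa)$ there is a constant $C(p,x_0)$ such that $V(x_0,t) \geq C(p,x_0) t^p$ for $t \geq 1$. This exhibits every such $p$ as belonging to the set whose sup defines $\lambda$, giving $\lambda \geq \kappa$.

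For the reverse inequality $\lambda \leq \kappa$, fix any $p \in (2,\lambda)$ (non-empty since $\lambda \geq \kappa > 2$). By definition of $\lambda$ as a supremum, there exists $q$ with $p < q < \lambda$ and a constant $C$ such that $V(x_0,t) \geq C t^q$ for all $t \geq 1$. For every $p' \in (1,q)$ we trivially have $V(x_0,t) \geq C t^q \geq C t^{p'}$ for $t \geq 1$, so condition (ii) of Theorem \ref{car_hyp} is satisfied with $p_0 = q > 2$. The theorem therefore yields condition (i): $M$ is $p'$-hyperbolic for every $p' \in (1,q)$. Equivalently, $\mathfrak{I} \cap (1,q) = \emptyset$, and hence $\kappa \geq q > p$. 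Letting $p \nearrow \lambda$ yields $\kappa \geq \lambda$, which combined with the previous step proves $\kappa = \lambda(x_0)$.

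There is no real obstacle beyond careful handling of the sup/inf; the only point worth noting is that Theorem \ref{car_hyp} is valid only for $p_0 \in (2,\infty]$, which is precisely why the assumption $\kappa > 2$ is imposed (so that the interval $(2,\kappa)$ over which we apply the theorem is non-empty in both directions of the equivalence).
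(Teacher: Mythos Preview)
Your proof is correct and is exactly the argument the paper has in mind: the Corollary is stated without proof in the paper as an immediate consequence of Theorem \ref{car_hyp}, and you have simply written out the two-sided application of that theorem. One small remark: the appeal to \eqref{P_loc} and the interval structure of $\mathfrak{I}$ is not actually needed in your argument, since both inequalities $\lambda\geq\kappa$ and $\kappa\geq\lambda$ follow directly from $\kappa=\inf\mathfrak{I}$ and the definition of $\lambda$ as a supremum.
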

Before being able to prove Theorem \ref{car_hyp}, we need to some preliminary results. For $p\in (1,\infty)$, let us introduce the following volume condition: for some (all) $x\in M$,

\begin{equation}\label{vol_p}\tag{$\tilde{V}_p$}
\int_1^\infty \frac{dt}{V(x,t)^{1/p}}<\infty.
\end{equation}

\begin{Lem}\label{Delta_hyp}

Assume that \eqref{vol_p} holds. Then, $\Delta^{-1/2}:L^p\to L^p_{loc}$ is bounded.

\end{Lem}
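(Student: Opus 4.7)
The plan is to use the subordination formula
\[
\Delta^{-1/2}f = \frac{1}{\sqrt{\pi}} \int_0^\infty t^{-1/2} e^{-t\Delta} f \, dt,
\]
and split the integral at $t=1$, writing $\Delta^{-1/2} = I_1 + I_2$ with $I_1$ the short-time part and $I_2$ the long-time part. The short time part will be bounded on all of $L^p$, while the long time part will be bounded pointwise by $\|f\|_p$ locally uniformly in $x$, and this will deliver $L^p \to L^p_{\loc}$ boundedness.

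First I would handle $I_1 = \frac{1}{\sqrt{\pi}}\int_0^1 t^{-1/2} e^{-t\Delta}\,dt$. Since $e^{-t\Delta}$ is sub-Markovian, it is a contraction on $L^p$ for every $p \in [1,\infty]$, and Minkowski's integral inequality yields
\[
\|I_1 f\|_p \leq \tfrac{1}{\sqrt{\pi}} \int_0^1 t^{-1/2} \|e^{-t\Delta}f\|_p \, dt \leq \tfrac{2}{\sqrt{\pi}} \|f\|_p.
\]
So $I_1$ is bounded $L^p \to L^p$ (in particular $\to L^p_{\loc}$) with no volume hypothesis.

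Next I would control $I_2 = \frac{1}{\sqrt{\pi}}\int_1^\infty t^{-1/2} e^{-t\Delta}\,dt$ pointwise, using $(UE)$ and $(D)$. For fixed $x$ and $t>0$, H\"older's inequality gives $|e^{-t\Delta}f(x)| \leq \|p_t(x,\cdot)\|_{p'} \|f\|_p$, and from $(UE)$ together with the standard Gaussian integration under $(D)$ (namely $\int_M e^{-c d^2(x,y)/t}\, d\mu(y) \lesssim V(x,\sqrt{t})$), one obtains
\[
\|p_t(x,\cdot)\|_{p'} \lesssim V(x,\sqrt{t})^{-1/p}.
\]
Therefore, after the substitution $s=\sqrt{t}$,
\[
|I_2 f(x)| \lesssim \|f\|_p \int_1^\infty t^{-1/2} V(x,\sqrt{t})^{-1/p}\, dt = 2\|f\|_p \int_1^\infty V(x,s)^{-1/p}\, ds.
\]
By $(\tilde V_p)$ the last integral is finite at some point $x_0 \in M$, and by the doubling property $(D)$ it is finite with a bound uniform in $x$ over any compact set $K$ (since $V(x,s) \sim V(x_0,s)$ for $s\geq 1$ and $x \in K$, with constants depending on $\mathrm{diam}(K \cup \{x_0\})$ and the doubling exponent $\nu$ from $(D_{\nu,\nu'})$).

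Finally, combining the two estimates gives, for any compact $K \subset M$,
\[
\|\Delta^{-1/2} f\|_{L^p(K)} \leq \|I_1 f\|_{L^p(M)} + \mu(K)^{1/p}\|I_2 f\|_{L^\infty(K)} \lesssim_K \|f\|_{L^p(M)},
\]
which is the claimed boundedness. The only mildly delicate point is the locally uniform bound on $\int_1^\infty V(x,s)^{-1/p}\,ds$ in terms of doubling, but this is routine. There is no real obstacle here; the argument is essentially a direct consequence of the subordination formula, the $L^p$ contractivity of the heat semigroup, and the Gaussian/doubling machinery already available.
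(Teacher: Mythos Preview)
Your proof is correct and follows essentially the same route as the paper: split $\Delta^{-1/2}$ into a short-time piece handled by $L^p$-contractivity of $e^{-t\Delta}$, and a long-time piece shown to map $L^p\to L^\infty_{\loc}$ via the Gaussian bound $\|p_t(x,\cdot)\|_{p'}\lesssim V(x,\sqrt t)^{-1/p}$ and the volume hypothesis \eqref{vol_p}. The only cosmetic difference is that the paper obtains this last bound by interpolating the operator norm $\|e^{-t\Delta}\|_{L^p\to L^\infty(x)}$ between $p=1$ and $p=\infty$, whereas you compute $\|p_t(x,\cdot)\|_{p'}$ directly via H\"older and the doubling Gaussian integral; both arguments and the locally-uniform-in-$x$ conclusion coincide.
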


\begin{proof}

Let us start by some preliminary observations. Write

$$\Delta^{-1/2}=\int_0^1 e^{-t\Delta}\frac{dt}{\sqrt{t}}+\int_1^\infty e^{-t\Delta}\frac{dt}{\sqrt{t}}=R+S.$$
Using the fact that $e^{-t\Delta}$ is a contraction semi-group on $L^p$, $p\in [1,\infty]$, we see that $R$ is bounded on $L^p$, $p\in [1,\infty]$. Hence, $\Delta^{-1/2}:L^p\to L^p_{loc}$ if and only if $S : L^p\to L^p_{loc}$. We are going to see that $S$ is in fact bounded from $L^p$ to $L^\infty_{loc}$. Introduce the notation: for $x\in M$,
%Furthermore, for $k\geq1$,

%$$\Delta^kS=\int_1^\infty \Delta^ke^{-t\Delta}\frac{dt}{\sqrt{t}},$$
%and by analyticity of the heat semi-group, for every $p\in (1,\infty)$, $||\Delta^ke^{-t\Delta}||_{p,p}\leq Ct^{-k}$, so that $\Delta^kS$ is bounded on $L^p$. As a consequence, by elliptic regularity (see \cite[]{GT}), if we assume that $S : L^p\to L^p_{loc}$, then $S : L^p\to L^\infty_{loc}$. Consequently,  $\Delta^{-1/2}:L^p\to L^p_{loc}$ if and only if $S : L^p\to L^\infty_{loc}$.\\

$$L^\infty(x):=L^\infty(\{x\}).$$
We estimate

$$||S||_{L^p,L^\infty(x)}\leq \int_1^\infty ||e^{-t\Delta}||_{L^p,L^\infty(x)}\frac{dt}{\sqrt{t}}.$$
From the Gaussian estimate satisfied by $e^{-t\Delta}$,

$$||e^{-t\Delta}||_{L^p,L^\infty(x)}\leq \frac{C}{V(x,\sqrt{t})^{1/p}}$$
(this follows by interpolation from the obvious cases $p=1,\infty$). Therefore,

$$||S||_{L^p,L^\infty(x)}\leq C \int_1^\infty \frac{dt}{\sqrt{t}V(x,\sqrt{t})^{1/p}}=C\int_1^\infty \frac{dt}{V(x,t)^{1/p}
}.$$
By hypothesis, the integral $\int_1^\infty \frac{dt}{V(x,t)^{1/p}
}$ converges for some (all) $x\in M$. It is easy to see that the convergence is uniform with respect to $x$, if $x$ belongs to a fixed compact set. Consequently, $S:L^p\to L^\infty_{loc}$, and therefore 

$$\Delta^{-1/2}:L^p\to L^p_{loc}.$$

\end{proof}
There is also a link between $p$-hyperbolicity and the fact that $\Delta^{-1/2}$ is bounded from $L^p$ to $L^p_{loc}$. Indeed, let us recall the following result from \cite[Proposition 2.2]{D2}:

\begin{Pro}\label{D_p_hyp}

Let $p\in (1,\infty)$ such that $M$ is $p$-hyperbolic. Assume that the Riesz transform $d\Delta^{-1/2}$ is bounded on $L^p$. Then 

$$\Delta^{-1/2}:L^p\to L^p_{loc}$$
is a bounded operator. Conversely, if the Riesz transform is bounded on $L^{p'}$, $\frac{1}{p}+\frac{1}{p'}=1$, and if

$$\Delta^{-1/2}:L^p\to L^p_{loc}$$
is a bounded operator, then $M$ is $p$-hyperbolic.

\end{Pro}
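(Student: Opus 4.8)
The plan is to deduce both implications from one structural fact: $p$-hyperbolicity of $M$ is equivalent to the family of \emph{localized} $L^p$ Poincar\'{e} inequalities
\begin{equation}\tag{$\star$}
\|v\|_{L^p(K)}\le C_K\,\|\nabla v\|_{L^p(M)},\qquad v\in C_0^\infty(M),
\end{equation}
indexed by compact sets $K\subset M$ (it suffices to have $(\star)$ for a single $K$ with non-empty interior). Indeed, $(\star)$ for a given $K$ is exactly the Hardy inequality \eqref{Hardy} with the non-zero weight $\rho=C_K^{-p}\mathbf 1_K$, so by \cite{PT2} it forces $M$ to be $p$-hyperbolic; conversely, $p$-hyperbolicity supplies, again by \cite{PT2}, a \emph{positive} (and, in the usual constructions, lower semicontinuous) weight $\rho$ with $\int_M\rho|v|^p\le\int_M|\nabla v|^p$, and restricting the integral to a compact $K$ and using $\inf_K\rho>0$ then yields $(\star)$. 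I will also use the Hahn--Banach dual of $(\star)$: $(\star)$ for $K$ is equivalent to the solvability, for every $h\in L^{p'}(M)$ supported in $K$, of $\div Y=h$ with a vector field $Y\in L^{p'}(M)$ obeying $\|Y\|_{p'}\le C_K\|h\|_{p'}$. These are combined with the self-adjointness of $\Delta^{-1/2}$ and with the elementary fact that the adjoint of the Riesz transform $d\Delta^{-1/2}$ (bounded on $L^p$) is the ``adjoint Riesz transform'' $\Delta^{-1/2}d^{*}$, which is therefore bounded on $L^{p'}$ (acting on $1$-forms).

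For the forward implication ($M$ $p$-hyperbolic and $d\Delta^{-1/2}$ bounded on $L^p$), I would fix a compact $K$, use $p$-hyperbolicity and the Hahn--Banach dual of $(\star)$ to write any $h\in L^{p'}$ supported in $K$ as $h=d^{*}Y$ with $\|Y\|_{p'}\le C_K\|h\|_{p'}$, and then apply the adjoint Riesz transform:
\[
\|\Delta^{-1/2}h\|_{p'}=\|(\Delta^{-1/2}d^{*})Y\|_{p'}\le C\,\|Y\|_{p'}\le C\,C_K\,\|h\|_{p'}.
\]
This says that $\Delta^{-1/2}$ maps $\{h\in L^{p'}:\supp h\subset K\}$ boundedly into $L^{p'}(M)$; taking the Banach-space adjoint of this operator and invoking self-adjointness of $\Delta^{-1/2}$ identifies it with $f\mapsto(\Delta^{-1/2}f)|_{K}$, so $\|\Delta^{-1/2}f\|_{L^p(K)}\le C\,C_K\,\|f\|_p$. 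Letting $K$ vary gives $\Delta^{-1/2}\colon L^p\to L^p_{\loc}$.

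For the converse ($\Delta^{-1/2}\colon L^p\to L^p_{\loc}$ bounded and $d\Delta^{-1/2}$ bounded on $L^{p'}$), I would fix a compact $K$ with non-empty interior and first dualize: the bounded operator $f\mapsto(\Delta^{-1/2}f)|_K$ on $L^p$ has, by self-adjointness, a bounded adjoint sending $L^{p'}$ functions supported in $K$ into $L^{p'}(M)$; composing with the Riesz transform on $L^{p'}$ gives, for such $h$,
\[
\|\nabla\Delta^{-1}h\|_{p'}:=\|(d\Delta^{-1/2})(\Delta^{-1/2}h)\|_{p'}\le C_K\,\|h\|_{p'}.
\]
Then I would integrate by parts: for all $v\in C_0^\infty(M)$ and all $h\in L^{p'}$ supported in $K$,
\[
\Bigl|\int_M v\,h\Bigr|=\bigl|\langle\nabla v,\nabla\Delta^{-1}h\rangle\bigr|\le\|\nabla v\|_p\,\|\nabla\Delta^{-1}h\|_{p'}\le C_K\,\|h\|_{p'}\,\|\nabla v\|_p,
\]
and taking the supremum over $h$ with $\supp h\subset K$ and $\|h\|_{p'}\le1$ yields $(\star)$ for $K$, hence $p$-hyperbolicity.

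The conceptual scheme is short; the real work will be the functional-analytic bookkeeping. One must fix a common dense class of ``nice'' functions --- built from the heat semigroup, whose control is exactly where \eqref{D} and \eqref{UE} enter --- on which $\Delta^{\pm1/2}$, $d$ and $d^{*}$ all act classically, so that the adjoint identities ($(d\Delta^{-1/2})^{*}=\Delta^{-1/2}d^{*}$, self-adjointness of $\Delta^{-1/2}$, and $\Delta^{-1}=\Delta^{-1/2}\Delta^{-1/2}$) and the integration-by-parts identity $\int v h=\langle\nabla v,\nabla\Delta^{-1}h\rangle$ may be verified there and then extended to the relevant $L^p$/$L^{p'}$ classes by the continuity estimates just obtained; this extension step is the delicate one, since when $M$ is near the parabolic threshold $\Delta^{-1/2}f$ (and $\Delta^{-1}h$) need not lie in $L^2$. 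The only other non-routine input is the equivalence of $p$-hyperbolicity with $(\star)$ through the Hardy-inequality characterization of \cite{PT2}, together with the (standard) fact that the Hardy weight there may be taken lower semicontinuous and strictly positive, so that $\inf_K\rho>0$ on compact sets.
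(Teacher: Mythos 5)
Your scheme is sound and reaches the right conclusion, but note first that the paper does not actually prove this Proposition: it is quoted from \cite[Proposition 2.2]{D2}, and the only trace of its proof here is the fragment recalled inside the proof of Theorem \ref{capacity}. That argument is a direct chain of inequalities on $u\in C_0^\infty(M)$: the hypothesis $\Delta^{-1/2}\colon L^p\to L^p_{\mathrm{loc}}$ is read, via the inclusion $\Delta^{1/2}C_0^\infty\subset L^p$ (\cite[Lemma 2.2]{D2}, quoted in the proof of Theorem \ref{main1}), as $\|u\|_{L^p(K)}\leq C_K\|\Delta^{1/2}u\|_p$; boundedness of the Riesz transform on $L^{p'}$ gives $\|\Delta^{1/2}u\|_p\leq C\|\nabla u\|_p$ by the duality of \cite[Proposition 2.1]{CDu2}; concatenating yields $\|u\|_{L^p(K)}\leq C\|\nabla u\|_p$, which is \eqref{Hardy} with $\rho=C^{-p}\mathbf 1_K$ (equivalently, positivity of the $p$-capacity), hence $p$-hyperbolicity. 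The forward direction is the same chain run backwards, using the positive-weight Hardy inequality of \cite{PT2} and $\|\nabla u\|_p\leq C\|\Delta^{1/2}u\|_p$ from the Riesz transform on $L^p$. Your proof uses exactly the same two essential inputs (the \cite{PT2} characterization with a weight bounded below on compacts, and the Riesz-transform duality), but routes everything through two extra dualizations: the Hahn--Banach representation $h=d^{*}Y$ of compactly supported $L^{p'}$ data, and the Banach adjoint of the localized operator. This is a legitimate alternative and makes the symmetry between the two implications transparent; what it costs is that you must give meaning to $\Delta^{-1/2}h$ and $\Delta^{-1}h$ for rough $L^{p'}$ data and verify the identities $\Delta^{-1/2}d^{*}Y=(d\Delta^{-1/2})^{*}Y$ and $\int vh=\langle\nabla v,\nabla\Delta^{-1}h\rangle$ on a class dense in $L^{p'}_K$ --- precisely the step you defer. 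Be aware that this is not entirely cosmetic: identifying the abstract adjoint with $\Delta^{-1/2}d^{*}$ requires testing against $\Delta^{1/2}C_0^\infty$ and knowing this class is total in $L^p$, and near the parabolic threshold $\Delta^{-1}h$ need not exist as a Green potential. The formulation of \cite{D2}, which never applies $\Delta^{-1/2}$ or $\Delta^{-1}$ to anything other than $\Delta^{1/2}u$ with $u\in C_0^\infty$, is designed to sidestep exactly these domain issues, and I would recommend recasting your argument in that form rather than completing the bookkeeping as stated.
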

\noindent\textit{Proof of Theorem \ref{car_hyp}:} consider the following four assertions:

\begin{enumerate}

\item For all $p\in (1,p_0)$, $M$ is $p$-hyperbolic.

\item For all $p\in (1,p_0)$, \eqref{Vol_p} is satisfied.

\item For all $p\in (1,p_0)$, \eqref{vol_p} is satisfied.

\item For all $p\in (1,p_0)$, $\Delta^{-1/2}:L^p\to L^p_{loc}.$

\end{enumerate}
We are going to show the chain of implications 

$$(1)\Rightarrow(2)\Rightarrow(3)\Rightarrow(4)\Rightarrow(1),$$
and that (ii) is equivalent to (3), which will prove the theorem. First, by \cite[Corollary 3.2]{CHSC}, (1) implies (2).

Let us assume (2), and let $p\in (1,p_0)$. By H\"{o}lder's inequality,

$$\int_1^\infty \frac{dt}{V(x,t)^{1/p}}\leq \left(\int_1^\infty \left(\frac{t}{V(x,t)}\right)^{\frac{q}{p(q-1)}}\right)^{\frac{q-1}{q}}\left( \int_1^\infty t^{-\frac{q}{p}}dt\right)^{\frac{1}{q}}.$$
If $p<q<p_0$, then the second integral converges. For the first one, define $r$ by

$$\frac{q}{p(q-1)}=\frac{1}{r-1}$$
If $q$ is close enough to $p$, then $p<r<p_0$. By (2), ($V_r$) is satisfied, that is 

$$\int_1^\infty \left(\frac{t}{V(x,t)}\right)^{\frac{1}{r-1}}du<\infty.$$
Therefore, the integral $\int_1^\infty \frac{dt}{V(x,t)^{1/p}}$ converges. This shows (2)$\Rightarrow$(3). 

The implication (3)$\Rightarrow$(4) is the result of Lemma \ref{Delta_hyp}. 

Assume (4). Since $M$ satisfies \eqref{UE} and \eqref{D}, by \cite{CDu} the Riesz transform $d\Delta^{-1/2}$ is bounded on $L^s$, $s\in (1,2]$. Hence, by Proposition \ref{D_p_hyp}, $M$ is $p$-hyperbolic for all $p\in (2,p_0)$. This shows (4)$\Rightarrow$ (1).

 It remains to show that (ii) is equivalent to (3). Obviously, (ii) implies (3). The converse is elementary, and has been  proved in \cite{CMO}. For the sake of completeness, we reproduce the argument here. Let us assume that \eqref{vol_p} holds. Let us denote $f(t)=V(x_0,t)^{-1/p}$, then $f$ is non-negative, non-increasing and integrable over $(1,\infty)$. It follows that for every $t>1$,

$$(t-1)f(t)\leq \int_1^t f(u)\,du \leq \int_1^\infty f(u)\,du=C<\infty.$$
Therefore,

$$f(t)\leq C(t-1)^{-1},$$
which implies that for every $t\geq 2$,

$$V(x_0,t)\geq C_pt^p.$$
This shows that (3) implies (ii), and concludes the proof of the theorem.

\cqfd
In fact, using the same ideas as in the proof of Theorem \ref{car_hyp}, one can also treat {\em relative} volume estimates:

\begin{Thm}\label{capacity}

Let $M$ satisfying \eqref{D} and \eqref{UE}. Let $p_0\in (2,\infty]$, $r_0>0$ and $x_0\in M$. Consider the following two inequalities, where $C_p$ is a positive constant:

\begin{enumerate}

\item[(i)] for all $p\in (1,p_0)$, $$\mathrm{Cap}_p(B(x,r))\geq C_p\frac{V(x,r)}{r^p}.$$

\item[(ii)] for all $p\in (1,p_0)$, $$\frac{V(x,t)}{V(x,r)}\geq C_p\left(\frac{t}{r}\right)^{p},\qquad\forall t\geq r.$$

\end{enumerate}
Then (i) and (ii), for all $r\geq r_0$ and $x=x_0$, are equivalent.

\end{Thm}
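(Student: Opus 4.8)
The plan is to mimic the structure of the proof of Theorem \ref{car_hyp}, replacing the global Sobolev-type arguments with their \emph{relative}, localized counterparts. The role played there by the condition $V(x_0,t)\geq Ct^p$ will here be played by the relative volume comparison (ii), and the role played by the boundedness $\Delta^{-1/2}:L^p\to L^p_{\mathrm{loc}}$ will be played by a localized capacity estimate. I would first observe that (ii) is exactly the statement $(D_{\nu,\nu'})$ with $\nu'=p$ restricted to balls centered at $x_0$ with radius $\geq r_0$, so that the content is how it interacts with $\mathrm{Cap}_p$.

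The implication (ii)$\Rightarrow$(i) is the analogue of ``(3)$\Rightarrow$(4)$\Rightarrow$(1)'' in the previous proof, and I would run it through the heat semigroup as in Lemma \ref{Delta_hyp}. Concretely, given a ball $B=B(x_0,r)$, one tests the capacity with the function $u=\Delta^{-1/2}(\Delta^{1/2}\mathbf{1}_{\text{cutoff}})$ built from the heat semigroup; more precisely, write $\mathrm{Cap}_p(B)\lesssim \int_M|\nabla u|^p\,d\mu$ for a suitable $u$ with $u\geq 1$ on $B$, and estimate this using \eqref{UE} and the local Poincar\'e-free argument of \cite{CMO}. The key quantitative input is that, under \eqref{D} and \eqref{UE}, for $p\in(1,2]$ the Riesz transform is bounded (by \cite{CDu}), which combined with a truncation of the integral $\int_{r^2}^\infty e^{-t\Delta}\frac{dt}{\sqrt t}$ at scales comparable to $r$ produces a competitor for $\mathrm{Cap}_p(B(x_0,r))$ whose energy is controlled by $\big(\int_{r}^\infty \frac{dt}{t\,V(x_0,t)^{1/(p-1)}}\big)^{-(p-1)}$-type quantities; feeding in (ii) makes this $\gtrsim V(x_0,r)/r^p$. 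For $p\in(2,p_0)$ one uses Proposition \ref{D_p_hyp} together with the scaled version of the equivalence between $\Delta^{-1/2}:L^p\to L^p_{\mathrm{loc}}$ and $p$-hyperbolicity, localized to balls around $x_0$; this is where Theorem \ref{car_hyp}, already proved, can be invoked essentially verbatim after rescaling.

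For the converse (i)$\Rightarrow$(ii), I would argue as in the elementary part of the proof of Theorem \ref{car_hyp} (the argument reproduced from \cite{CMO}). The capacity of an annulus $B(x_0,t)\setminus B(x_0,r)$ is, up to constants and under \eqref{D}, \eqref{UE}, comparable to the ``series resistance'' $\big(\int_r^t \frac{ds}{s\,V(x_0,s)^{1/(p-1)}}\big)^{-(p-1)}$; the hypothesis $\mathrm{Cap}_p(B(x,\rho))\gtrsim V(x,\rho)/\rho^p$ applied at each dyadic scale between $r$ and $t$ forces $V(x_0,s)^{-1/p}$ to be dominated by $C/s$ along that range, which upon integrating and using the doubling property \eqref{D} yields $V(x_0,t)/V(x_0,r)\gtrsim (t/r)^p$. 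Monotonicity of $f(s)=V(x_0,s)^{-1/p}$ — which is where doubling is genuinely used, since $V$ itself is increasing — makes the tail estimate go through exactly as in the last display of the proof of Theorem \ref{car_hyp}.

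The main obstacle, I expect, is the first implication: producing a good test function for $\mathrm{Cap}_p(B(x_0,r))$ out of the heat kernel bounds in a way that is uniform as $r\to\infty$ and that does not secretly require a global Poincar\'e inequality. The previous proofs sidestep this because they work with $\Delta^{-1/2}:L^p\to L^p_{\mathrm{loc}}$ rather than with capacities of balls; here one must transfer between the ``integral resistance to infinity'' picture and the ``resistance of a single large ball'' picture, and keeping the constants scale-invariant under $x=x_0$, $r\geq r_0$ is the delicate point. I would handle it by first reducing, via a standard scaling/covering argument using \eqref{D} and \eqref{UE}, to the normalized situation $r\sim 1$ and then quoting the local estimates already established, so that the only genuinely new work is bookkeeping of the scaling exponents — which is exactly why the statement is phrased for the half-line $(1,p_0)$ rather than including $p_0$ itself.
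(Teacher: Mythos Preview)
Your proposal contains a genuine logical gap in the implication (ii)$\Rightarrow$(i). You write ``one tests the capacity with the function $u=\Delta^{-1/2}(\Delta^{1/2}\mathbf{1}_{\text{cutoff}})$\ldots more precisely, write $\mathrm{Cap}_p(B)\lesssim \int_M|\nabla u|^p\,d\mu$ for a suitable $u$ with $u\geq 1$ on $B$''. But a competitor yields an \emph{upper} bound on $\mathrm{Cap}_p$, while (i) is a \emph{lower} bound. No single test function can give you (i); what is needed is a functional inequality valid for \emph{all} $u\in C_0^\infty(M)$. The paper's argument does exactly this: one splits $\Delta^{-1/2}=R_r+S_r$ with $R_r=\int_0^{r^2}e^{-t\Delta}\frac{dt}{\sqrt t}$ and $S_r=\int_{r^2}^\infty e^{-t\Delta}\frac{dt}{\sqrt t}$, uses the Gaussian bound and (ii) (through an intermediate integral condition) to show $\|\Delta^{-1/2}\|_{L^p\to L^p(B(x_0,r))}\leq Cr$, then combines this with the reverse Riesz inequality $\|\Delta^{1/2}u\|_p\leq C\|\nabla u\|_p$ (which holds because $d\Delta^{-1/2}$ is bounded on $L^{p'}$, $p'\in(1,2]$, by \cite{CDu}) to obtain $\|u\|_{L^p(B(x_0,r))}\leq Cr\|\nabla u\|_p$ for all $u$. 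Only \emph{then} does one plug in a minimizing sequence for $\mathrm{Cap}_p(B(x_0,r))$ to extract $V(x_0,r)^{1/p}\leq Cr\,\mathrm{Cap}_p(B(x_0,r))^{1/p}$. Your final paragraph's ``reduce to $r\sim 1$ by rescaling'' is also not available: there is no dilation on a general manifold, and the whole point is that the constant must be uniform in $r\geq r_0$.

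For (i)$\Rightarrow$(ii) your dyadic/annulus sketch is in the right spirit but more complicated than necessary. The paper instead introduces the intermediate condition
\[
\mathrm{(iii)}\qquad \int_r^\infty\Big(\tfrac{V(x_0,r)}{V(x_0,t)}\Big)^{1/p}\,dt\leq C_p\,r,
\]
and uses the \emph{general} upper bound $\mathrm{Cap}_p(\bar B(x_0,r))\leq \big(\int_r^\infty (t/V(x_0,t))^{1/(p-1)}\,dt\big)^{1-p}$ from \cite{CHSC}, which together with (i) gives a $(V_p)$-type integral inequality for every $p<p_0$; H\"older (exactly the (2)$\Rightarrow$(3) step of Theorem~\ref{car_hyp}) then yields (iii), and the monotonicity argument you correctly identify converts (iii) into (ii). The passage (ii)$\Rightarrow$(iii) is immediate, so the circle closes through (iii) rather than through dyadic annuli.
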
 

\begin{Rem}
{\em
One can also show the equivalence of (i) and (ii) in Theorem \ref{capacity} under one of the following alternative conditions on $x$ and $r$:

\begin{itemize}

\item For all $x\in M$, and $r=r_0$.

\item For all $x\in M$ and all $r\geq r_0$.

\end{itemize}
The proof is the same.

}
\end{Rem}

\begin{Rem}
{\em
The result of Theorem \ref{capacity} shows that in \cite[Theorem A]{Ca}, the second part does not improve on the first one. 
}
\end{Rem}

\begin{Def}
{\em 
Let $p\in (1,\infty)$. A manifold $M$ such that, for some positive constant $C_p$, for all $x\in M$ and all $r>0$, 

$$\mathrm{Cap}_p(B(x,r))\geq C_p\frac{V(x,r)}{r^p},$$
will be called $p$-{\em regular}.

}

\end{Def}

\begin{proof}

Let us introduce a third inequality:

\begin{itemize}
\item[(iii)] $$\int_{r}^\infty\left(\frac{V(x,r)}{V(x,t)}\right)^{1/p}\,dt\leq C_p\,r.$$
\end{itemize}

We show the equivalence of (i), (ii) and (iii) under the condition (b), the two other cases being similar. We first show that (i) and (iii) are equivalent. According to \cite{CHSC}, the $p$-capacity of $\bar{B}(x_0,r)$ can be estimated by

$$\mathrm{Cap}_p(\bar{B}(x_0,r))\leq \left(\int_r^\infty \left(\frac{t}{V(x_0,t)}\right)^{\frac{1}{p-1}}\,dt\right)^{1-p}.$$
Assume that (i) holds true for all $1<p<p_0$ and all $r\geq1$, at the point $x=x_0$. One obtains that for all $r\geq1$,

$$\int_r^\infty \left(\frac{tV(x_0,r)}{r^pV(x_0,t)}\right)^{\frac{1}{p-1}}\,dt\leq C_p^{1-p}.$$
The argument based on H\"older's inequality and used in the proof of Theorem \ref{car_hyp} (see the implication (2)$\Rightarrow$(3)) shows that for every $p\in (1,p_0)$, (iii) is satisfied for all $r\geq r_0$, at the point $x_0$. 

Conversely, assume that (iii) is satisfied for all $r\geq r_0$, at the point $x_0$. As in the proof of Proposition \ref{Delta_hyp}, let us introduce the operators:

$$R_r=\int_0^{r^2}e^{-t\Delta}\frac{dt}{\sqrt{t}},$$
and

$$S_r=\int_{r^2}^\infty e^{-t\Delta}\frac{dt}{\sqrt{t}},$$
so that 

$$\Delta^{-1/2}=R_r+S_r.$$
Using the uniform boundedness of $e^{-t\Delta}$ on $L^p$, we see that

$$||R_r||_{p,p}\leq Cr.$$
We want to estimate $||S_r||_{L^p\to L^p(B(x_0,r))}$. The computations done in the proof of Proposition \ref{Delta_hyp} show that

$$||S_r||_{L^p\to L^\infty(x)}\leq C\int_r^\infty \frac{dt}{V(x,t)^{1/p}}.$$
Using \eqref{D}, we see that for every $x\in B(x_0,r)$,

$$\begin{array}{rcl}
||S_r||_{L^p\to L^\infty(x)}&\leq& \frac{C}{V(x_0,r)^{1/p}}\int_r^\infty \left(\frac{V(x_0,r)}{V(x,t)}\right)^{1/p}\,dt\\\\
&\leq & \frac{C}{V(x_0,r)^{1/p}}\int_r^\infty \left(\frac{V(x_0,r)}{V(x_0,t)}\right)^{1/p}\,dt.
\end{array}$$ 
Thus,

$$\begin{array}{rcl}
||S_r||_{L^p\to L^p(B(x_0,r))}&\leq& V(x_0,r)^{1/p}||S_r||_{L^p\to L^\infty(B(x_0,r))}\\\\
&\leq & \int_r^\infty \left( \frac{V(x_0,r)}{V(x_0,t)}\right)^{1/p}\,dt,
\end{array}$$ 
and by (iii) one obtains

$$||S_r||_{L^p\to L^p(B(x_0,r))}\leq Cr.$$
Consequently,

$$||\Delta^{-1/2}||_{L^p\to L^p(B(x_0,r))}\leq Cr.$$
We now recall an argument from \cite[Proposition 2.2]{D2} (we refer to this paper for more details): from the above inequality, one can conclude that

$$||u||_{L^p(B(x_0,r))}\leq Cr||\Delta^{1/2}u||_p,\qquad\forall u\in C_0^\infty(M).$$
Since $M$ satisfies \eqref{UE} and \eqref{D}, by \cite{CDu} the Riesz transform $d\Delta^{-1/2}$ is bounded on $L^s$, $s\in (1,2]$. In particular, it is bounded on $L^{p'}$, $\frac{1}{p}+\frac{1}{p'}=1$. But it is well-known (see \cite[Proposition 2.2]{CDu2}) that the boundedness of the Riesz transform on $L^{p'}$, implies the inequality

$$||\Delta^{1/2}u||_p\leq C||\nabla u||_p,\qquad\forall u\in C_0^\infty(M).$$
Thus,

$$||u||_{L^p(B(x_0,r))}\leq Cr||\nabla u||_p,\qquad\forall u\in C_0^\infty(M).$$
Using this inequality for a sequence $(u_n)_{n\in \mathbb{N}}$ of $C_0^\infty$ functions, such that $u_n\geq1$ on $B(x_0,r)$ and 

$$\lim_{n\to\infty}\int_M|\nabla u_n|^p=\mathrm{Cap}_p(\bar{B}(x_0,r)),$$
one finds that

$$V(x_0,r)^{1/p}\leq Cr\left(\mathrm{Cap}_p(B(x_0,r))\right)^{1/p},$$
that is (i). This concludes the proof of the equivalence between (i) and (iii). We now prove the equivalence between (ii) and (iii). Of course, (ii) implies (iii). The converse uses the same idea as in the proof of Theorem \ref{car_hyp}: for $r\geq r_0$, denote $f_r(t)=\left(\frac{V(x_0,r)}{V(x_0,t)}\right)^{1/p}$, then $f_r$ is non-negative, non-increasing, so that by (ii),

$$(t-r)f_r(t)\leq \int_r^t f_r(s)\,ds\leq \int_r^\infty f_r(s)\,ds\leq Cr.$$
Therefore,

$$f_r(t)\leq Cr(t-r)^{-1},\qquad \forall t\geq r,$$
that is,

$$\left(\frac{V(x_0,r)}{V(x_0,t)}\right)\geq C_p\left(\frac{t-r}{r}\right)^p,\qquad \forall t\geq r,$$
which by \eqref{D} implies (ii).

\end{proof}

\section{Weighted spaces}

In general, a manifold $M$ for which \eqref{D} and \eqref{UE} hold, do not need to satisfy either the Sobolev inequality \eqref{S}, or the non-collapsing of balls \eqref{NC}. In particular, in general the operators $\Delta^{-\alpha}$ do not behave well with respect to the $L^p$ spaces. In order to overcome these difficulties, weighted spaces have to be considered. Weighted estimates on the $L^p$ spaces for the heat kernel have recently been considered in \cite{BCS} (see also \cite{AO}), and their equivalence with weighted resolvent estimates on $L^p$ spaces has been demonstrated. Here, we will push this idea one step further, and introduce a natural class of weighted $L^p$ spaces. For $p\in [1,\infty]$, define

$$L^p_V(M):= L^p\left(M,\frac{d\mu(x)}{V(x,1)}\right).$$
Notice that $L^\infty_V(M)=L^\infty(M)$. Note also that by Bishop-Gromov, if the Ricci curvature of $M$ is bounded from below, and $M$ satisfies the non-collapsing \eqref{NC}, then  $V(x,1)\sim 1$ and $L^p_V(M)$ identifies to $L^p(M).$ Let us recall the following result from \cite{AO} or \cite{BCS}: 

\begin{Pro}\label{vEv}

Let $1\leq p\leq q\leq\infty$. Let $\delta$ and $\gamma$ be real numbers so that $\delta+\gamma=\frac{1}{p}-\frac{1}{q}$. Let $P$ be a non-negative, self-adjoint operator on $L^2(M,\mu)$, such that the Gaussian estimates hold for its heat kernel. Then 

$$\sup_{t>0}||V(\cdot,\sqrt{t})^{\gamma}e^{-tP}V(\cdot,\sqrt{t})^{\delta}||_{p,q}<\infty.$$

\end{Pro}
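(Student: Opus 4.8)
The plan is to reduce the weighted $L^p\to L^q$ bound to the unweighted Gaussian estimate plus the doubling property \eqref{D}, using interpolation and duality. First I would observe that it suffices to treat the two endpoint cases $(p,q)=(1,\infty)$ — where $\delta+\gamma=1$ — and the ``diagonal'' case $p=q$ — where $\delta+\gamma=0$; the general case then follows by complex (Stein) interpolation in the pair $(\delta,\gamma)$, since the family $V(\cdot,\sqrt t)^{\gamma}e^{-tP}V(\cdot,\sqrt t)^{\delta}$ depends analytically on the parameters and the operator norms one obtains are uniform in $t$. Actually a cleaner route is: prove the estimate for $(p,q)=(1,1)$ and for $(p,q)=(\infty,\infty)$ (in both cases $\delta+\gamma=0$), and for $(p,q)=(1,\infty)$ (where $\delta+\gamma=1$), then interpolate. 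By the self-adjointness of $P$, the kernel of $e^{-tP}$ is symmetric, so the $(\infty,\infty)$ bound is dual to the $(1,1)$ bound with $\delta$ and $\gamma$ interchanged; hence only two genuine computations are needed.

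For the $L^1\to L^1$ estimate, I would write the kernel of $V(\cdot,\sqrt t)^{\gamma}e^{-tP}V(\cdot,\sqrt t)^{\delta}$ as $V(x,\sqrt t)^{\gamma}p_t(x,y)V(y,\sqrt t)^{\delta}$ with $\gamma=-\delta$, and bound the $L^1\to L^1$ norm by $\sup_y \int_M V(x,\sqrt t)^{-\delta}p_t(x,y)V(y,\sqrt t)^{\delta}\,d\mu(x)$. Using the Gaussian upper bound $p_t(x,y)\lesssim V(x,\sqrt t)^{-1}e^{-cd^2(x,y)/t}$ together with the standard consequence of doubling that $V(y,\sqrt t)\lesssim V(x,\sqrt t)\,(1+d(x,y)/\sqrt t)^{\nu}$ (and the reverse bound), one absorbs the weight ratio $\big(V(y,\sqrt t)/V(x,\sqrt t)\big)^{\delta}$ into the Gaussian factor at the cost of a polynomial in $d(x,y)/\sqrt t$; the resulting integral $\int_M V(x,\sqrt t)^{-1}(1+d(x,y)/\sqrt t)^{N}e^{-cd^2(x,y)/t}\,d\mu(x)$ is then bounded by a constant independent of $y$ and $t$ by the usual annular decomposition $M=\bigcup_k \{2^{k-1}\sqrt t\le d(x,y)<2^k\sqrt t\}$ and doubling. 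For the $L^1\to L^\infty$ estimate (here $\delta+\gamma=1$), the norm is simply $\sup_{x,y} V(x,\sqrt t)^{\gamma}p_t(x,y)V(y,\sqrt t)^{\delta}$, and the same weight-transfer trick reduces this to $\sup_{x,y}V(x,\sqrt t)^{-1}(1+d(x,y)/\sqrt t)^{N}e^{-cd^2(x,y)/t}\lesssim 1$.

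The main obstacle, and the only place where some care is needed, is the weight-transfer step: one must control $V(y,\sqrt t)/V(x,\sqrt t)$ by a polynomial in $d(x,y)/\sqrt t$ uniformly in $t$, which is exactly where \eqref{D} (equivalently \eqref{exp_D}) enters; one should check that the polynomial growth is dominated by the Gaussian decay for all the relevant exponents $\delta,\gamma$, which it is since $e^{-cd^2/t}$ beats any power of $(1+d/\sqrt t)$. Once the two (really one, up to duality) kernel estimates are in place, Stein interpolation applied to the analytic family $z\mapsto V(\cdot,\sqrt t)^{(1-z)\gamma_0+z\gamma_1}e^{-tP}V(\cdot,\sqrt t)^{(1-z)\delta_0+z\delta_1}$ between the endpoint data, with constants uniform in $t$, yields the full range $1\le p\le q\le\infty$ with $\delta+\gamma=\frac1p-\frac1q$, completing the proof. (Alternatively, one may cite the references \cite{AO}, \cite{BCS} directly, as the statement is quoted from there.)
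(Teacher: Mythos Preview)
The paper does not give its own proof of this proposition: it is simply quoted as a known result from \cite{AO} and \cite{BCS} (see the sentence immediately preceding the statement). Your sketch is correct and is essentially the argument found in those references---Gaussian kernel bound, weight transfer via doubling $V(y,\sqrt t)\lesssim (1+d(x,y)/\sqrt t)^{\nu}V(x,\sqrt t)$, the dyadic annular integral estimate, and Stein interpolation between the endpoints $(1,1)$, $(\infty,\infty)$, $(1,\infty)$---so there is nothing to compare.
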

We will need a similar estimate for the gradient of the heat kernel:

\begin{Pro}\label{gvEv}

Assume \eqref{D} and \eqref{UE}, and that for some $q\in (1,\infty)$,

$$\sup_{t>0}\sqrt{t}||\nabla e^{-t\Delta}||_{q,q}<\infty.$$
Then for every $1\leq p< q$, and $\delta$, $\gamma$ real numbers so that $\delta+\gamma=\frac{1}{p}-\frac{1}{q}$,

$$\sup_{t>0}\sqrt{t}||V(\cdot,\sqrt{t})^{\gamma}\nabla e^{-t\Delta}V(\cdot,\sqrt{t})^{\delta}||_{p,q}<\infty.$$

\end{Pro}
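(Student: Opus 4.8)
\textbf{Proof proposal for Proposition \ref{gvEv}.}

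The plan is to mimic the proof of Proposition \ref{vEv} (which we may invoke), combining the hypothesized $L^q\to L^q$ gradient bound for $\sqrt{t}\,\nabla e^{-t\Delta}$ with the Gaussian decay of the heat kernel, and using the doubling property to move the weights $V(\cdot,\sqrt t)^{\gamma}$ and $V(\cdot,\sqrt t)^{\delta}$ across. First I would use the semigroup property to factor $\sqrt{t}\,\nabla e^{-t\Delta}=\bigl(\sqrt{t}\,\nabla e^{-(t/2)\Delta}\bigr)\circ e^{-(t/2)\Delta}$, so that
\[
\sqrt{t}\,V(\cdot,\sqrt t)^{\gamma}\nabla e^{-t\Delta}V(\cdot,\sqrt t)^{\delta}
=\Bigl(\sqrt t\,V(\cdot,\sqrt t)^{\gamma}\nabla e^{-(t/2)\Delta}V(\cdot,\sqrt t)^{-\gamma}\Bigr)\circ\Bigl(V(\cdot,\sqrt t)^{\gamma}e^{-(t/2)\Delta}V(\cdot,\sqrt t)^{\delta}\Bigr).
\]
The second factor is bounded from $L^p$ to $L^q$ uniformly in $t$ by Proposition \ref{vEv} (up to the harmless rescaling $t\to t/2$ and the fact that $V(x,\sqrt{t/2})\sim V(x,\sqrt t)$ by \eqref{D}). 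So it suffices to show the first factor is bounded on $L^q$ uniformly in $t$.

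For that first factor, the hypothesis gives $\sup_t \sqrt t\,\|\nabla e^{-t\Delta}\|_{q,q}<\infty$, so the remaining point is that conjugation by the weight $V(\cdot,\sqrt t)^{\gamma}$ is an $L^q$-bounded operation, uniformly in $t$, when applied to $\sqrt t\,\nabla e^{-t\Delta}$. This is where the Gaussian estimate \eqref{UE} and the Li-Yau-type gradient bound come in: the kernel of $\sqrt t\,\nabla e^{-t\Delta}$ satisfies a pointwise bound of the form $C V(x,\sqrt t)^{-1}\exp(-c\,d^2(x,y)/t)$ (this follows from \eqref{UE} together with the $L^q$-gradient bound by the standard argument, e.g. analyticity of the semigroup plus \eqref{UE}; alternatively one may take this as a known consequence of the hypotheses). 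On the region $d(x,y)\lesssim\sqrt t$ doubling gives $V(x,\sqrt t)\sim V(y,\sqrt t)$, so the weight ratio $V(x,\sqrt t)^{\gamma}/V(y,\sqrt t)^{\gamma}$ is $\sim 1$; on the far region $d(x,y)\gg\sqrt t$, the polynomial growth of the weight ratio from \eqref{exp_D}, namely $V(x,\sqrt t)^{\gamma}/V(y,\sqrt t)^{\gamma}\lesssim (1+d(x,y)/\sqrt t)^{\nu|\gamma|}$, is absorbed by the Gaussian factor $\exp(-c\,d^2(x,y)/t)$. Hence the conjugated kernel still obeys a Gaussian-type bound, and a Schur test (or the standard argument that a kernel dominated by $V(x,\sqrt t)^{-1}\exp(-c\,d^2/t)$ is uniformly bounded on all $L^r$) shows the conjugated operator is bounded on $L^q$ uniformly in $t$. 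Combining the two factors yields the claim; note the condition $p<q$ is exactly what makes $\delta+\gamma=\frac1p-\frac1q>0$ and keeps Proposition \ref{vEv} applicable with the correct sign of exponents.

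The main obstacle I anticipate is justifying the pointwise Gaussian bound on the kernel of $\sqrt t\,\nabla e^{-t\Delta}$ from the stated $L^q\to L^q$ bound — this is not completely formal, and the cleanest route is probably to avoid kernels altogether: write $\nabla e^{-t\Delta}=\nabla e^{-(t/2)\Delta}\,e^{-(t/2)\Delta}$, bound $\sqrt t\,\|\nabla e^{-(t/2)\Delta}\|_{q,q}$ by hypothesis, and handle the weight conjugation by splitting $e^{-(t/2)\Delta}$ itself (whose kernel genuinely satisfies \eqref{UE}) into near- and far-diagonal pieces and moving the weights there, exactly as in the proof of Proposition \ref{vEv}. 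In other words, one arranges that every time a weight $V(\cdot,\sqrt t)^{\pm}$ needs to cross an operator, that operator is a heat semigroup (with honest Gaussian kernel) rather than the gradient of one, and the gradient appears only with a plain $L^q$ bound applied to an already-$L^q$ function. This reduces everything to Proposition \ref{vEv} plus the hypothesis, and the doubling/\eqref{exp_D} bookkeeping for the weights is then routine.
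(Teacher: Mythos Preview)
Your factorization $\sqrt t\,\nabla e^{-t\Delta}=(\sqrt t\,\nabla e^{-(t/2)\Delta})\circ e^{-(t/2)\Delta}$ together with Proposition~\ref{vEv} does give the result when $\gamma=0$, and indeed this is exactly the paper's opening observation. The gap is in passing to general $\gamma$. Both of your proposed fixes fail under the stated hypotheses:

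\begin{itemize}
\item A pointwise Gaussian bound on the kernel of $\sqrt t\,\nabla e^{-t\Delta}$ is \emph{not} a consequence of \eqref{D}, \eqref{UE}, and the $L^q\to L^q$ gradient bound alone; such pointwise gradient estimates typically need Poincar\'e inequalities or an $L^\infty$ gradient bound, neither of which is assumed here. So the Schur-test route is unavailable.
\item Your ``cleanest route'' of letting weights cross only genuine heat semigroups cannot be made to work, because the weight $V(\cdot,\sqrt t)^\gamma$ sits to the \emph{left} of $\nabla$. In your factorization the first factor is $V^\gamma\,\nabla e^{-(t/2)\Delta}\,V^{-\gamma}$, and there is no heat semigroup between $V^\gamma$ and $\nabla$ to absorb the weight. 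You cannot commute $V^\gamma$ past $\nabla$ without producing $\nabla V^\gamma$, which is uncontrolled, and you cannot simply drop $V^\gamma$ since it is unbounded in general.
\end{itemize}

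The paper resolves this by a different mechanism: after reducing to $p=2$ by complex interpolation (interpolating between the $\gamma=0$ case and the hypothesis $\|\sqrt t\,\nabla e^{-t\Delta}\|_{q,q}<\infty$), it uses a transmutation formula to write $e^{-t\Delta}$ as a superposition of operators $F_a(\sqrt{st\Delta})$ built from the wave propagator. The point is that $\nabla F_a(r\sqrt\Delta)$ has \emph{finite propagation speed} $r$, and for such operators \cite[Proposition~4.1.1]{BCS} allows one to transfer the left weight to the right: $\|V_r^\gamma T_r V_r^\delta\|_{2,q}\lesssim \|T_r V_r^{1/2-1/q}\|_{2,q}$. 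This is precisely the missing ingredient that lets the weight cross the gradient, and it has no analogue in your kernel-based or semigroup-factorization arguments.
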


\begin{Rem}

{\em

By analyticity on $L^p$ of the heat semi-group of $\Delta$, if the Riesz transform is bounded on $L^q$, then the gradient estimate

$$\sup_{t>0}\sqrt{t}||\nabla e^{-t\Delta}||_{q,q}<\infty$$
holds. Conversely, it is shown in \cite{ACDH} that if the above gradient estimate holds, and $M$ satisfies \eqref{D} and \eqref{P}, then the Riesz transform is bounded on $L^p$, for all $p\in (1,q)$.

}
\end{Rem}

\begin{proof}

Denote $V_{\sqrt{t}}(x):=V(x,\sqrt{t}).$ Writing $\nabla e^{-t\Delta}=\nabla e^{-\frac{t}{2}\Delta}e^{-\frac{t}{2}\Delta}$ and using the result of Proposition \ref{vEv} and \eqref{D}, it is easy to see that the result holds true if $\gamma=0$. In order to prove the result for all $\gamma$, we will use some ideas from \cite{BCS}. By complex interpolation for the family of operators

$$T_z=V(\cdot,\sqrt{t})^{\gamma_1z+(1-z)\gamma_2}\nabla e^{-t\Delta}V(\cdot,\sqrt{t})^{\delta_1z+(1-z)\delta_2},$$
and using $\sup_{t>0}\sqrt{t}||\nabla e^{-t\Delta}||_{q,q}<\infty$, it is enough to prove the result for $p=2$. Therefore, let us take $\delta$, $\gamma$ real numbers so that $\delta+\gamma=\frac{1}{2}-\frac{1}{q}$. Define $\Phi=F_a$ to be the Fourier transform of $t\mapsto (1-t^2)_+^a$. It can be checked (see the proof of Proposition 4.1.6. in \cite{BCS}) that the following transmutation formula holds:

$$e^{-t\Delta}=\int_0^\infty F_a(\sqrt{st\Delta})s^{a+\frac{1}{2}}e^{-s/4}\,ds.$$
Therefore, using \eqref{exp_D},

$$\begin{array}{lcr}
\sqrt{t}||V_{\sqrt{t}}^{\gamma}\nabla e^{-t\Delta}V_{\sqrt{t}}^{\delta}||_{2,q}\leq  \\\\
\hskip8mm\int_0^\infty \sqrt{st}||V_{\sqrt{st}}^{\gamma}\nabla F_a(\sqrt{st\Delta}) V_{\sqrt{st}}^{\delta}||_{2,q}\left(\sqrt{s}+\frac{1}{\sqrt{s}}\right)^{\nu(|\gamma|+|\delta|)}s^{a}e^{-s/4}\,ds.
\end{array}$$
Hence, if $a$ is big enough,

$$\sup_{t>0}\sqrt{t}||V_{\sqrt{t}}^{\gamma}\nabla e^{-t\Delta}V_{\sqrt{t}}^{\delta}||_{2,q}\leq \sup_{t>0} \sqrt{t}||V_{\sqrt{t}}^{\gamma}\nabla F_a(\sqrt{t\Delta}) V_{\sqrt{t}}^{\delta}||_{2,q}.$$
Setting $T_r=\nabla F_a(r\sqrt{\Delta})$, it follows from the finite propagation speed property for the wave equation that if $f_1$ has support in $B_1$, $f_2$ has support with $B_2$ and $d(B_1,B_2)>r$, then $\langle T_rf_1,f_2\rangle=0$. By \cite[Proposition 4.1.1.]{BCS},

$$||V_{r}^{\gamma} T_r V_{r}^{\delta}||_{2,q}\leq C||T_rV_r^{\frac{1}{2}-\frac{1}{q}}||_{2,q}.$$
By a straightforward adaptation of \cite[Lemma 4.1.4.]{BCS}, if $\sup_{\lambda}|(1+\lambda^2)^{N+1}\Phi(\lambda)|<\infty$, then 

$$||\nabla \Phi(\sqrt{t\Delta})V_{\sqrt{t}}^{\frac{1}{2}-\frac{1}{q}}||_{2,q}\leq C ||\nabla (I+t\Delta)^{-N}V_{\sqrt{t}}^{\frac{1}{2}-\frac{1}{q}}||_{2,q}.$$
One easily checks that the condition $\sup_{\lambda}|(1+\lambda^2)^{N+1}\Phi(\lambda)|<\infty$ is satisfied for $\Phi=F_a$ if $2N+1\leq a$. Therefore, if $a$ is large enough and $a\geq 2N+1$,

$$\sup_{t>0}\sqrt{t}||V_{\sqrt{t}}^{\gamma}\nabla e^{-t\Delta}V_{\sqrt{t}}^{\delta}||_{2,q}\leq C \sup_{t>0}\sqrt{t}||\nabla (I+t\Delta)^{-N}V_{\sqrt{t}}^{\frac{1}{2}-\frac{1}{q}}||_{2,q}.$$
We now use 

$$(I+t\Delta)^{-N}=\frac{1}{\Gamma(N)}\int_0^\infty e^{-s}s^{N-1}e^{-st\Delta}\,ds,$$
so that, using \eqref{exp_D},

$$\begin{array}{lcr}
\sup_{t>0}\sqrt{t}||\nabla (I+t\Delta)^{-N}V_{\sqrt{t}}^{\frac{1}{2}-\frac{1}{q}}||_{2,q}\leq  \\\\
\hskip8mmC\int_0^\infty e^{-s}s^{N-\frac{3}{2}}\left(\sqrt{s}+\frac{1}{\sqrt{s}}\right)^{\nu(|\delta|+|\gamma|)}\sqrt{st}||\nabla e^{-st\Delta}V_{\sqrt{st}}^{\frac{1}{2}-\frac{1}{q}}||_{2,q}\,ds.
\end{array}$$
Hence, if $N$ is big enough,

$$\sup_{t>0}\sqrt{t}||\nabla (I+t\Delta)^{-N}V_{\sqrt{t}}^{\frac{1}{2}-\frac{1}{q}}||_{2,q}\leq C\sup_{t>0} \sqrt{t}||\nabla e^{-t\Delta}V_{\sqrt{t}}^{\frac{1}{2}-\frac{1}{q}}||_{2,q}.$$
Consequently, if one chooses $N$ and $a$ big enough satisfying $a\geq 2N+1$, one obtains

$$\sup_{t>0}\sqrt{t}||V_{\sqrt{t}}^{\gamma}\nabla e^{-t\Delta}V_{\sqrt{t}}^{\delta}||_{2,q}\leq \sup_{t>0} \sqrt{t}||\nabla e^{-t\Delta}V_{\sqrt{t}}^{\frac{1}{2}-\frac{1}{q}}||_{2,q}.$$
This last quantity is finite by the remark we made at beginning of the proof, and this concludes the proof of Proposition \ref{gvEv}.

\end{proof}
We now rephrase Proposition \ref{vEv} and Proposition \ref{gvEv} in term of action of the heat kernel on the weighted spaces $L^p_V$. Let $P=\Delta+\mathcal{V}$ be a Schr\"{o}dinger operator. Using \eqref{exp_D}, we see that if the heat kernel of $P$ has Gaussian estimates, then

$$\begin{array}{rcl}
&&||V(\cdot,\sqrt{t})^{-1/q}e^{-tP}V(\cdot,\sqrt{t})^{1/p}||_{p,q}\\\\
&=&||\left(\frac{V(\cdot,\sqrt{t})}{V(\cdot,1)}\right)^{-1/q}V(\cdot,1)^{-1/q}e^{-tP}V(\cdot,1)^{1/p}\left(\frac{V(\cdot,\sqrt{t})}{V(\cdot,1)}\right)^{1/p}||_{p,q}\\\\
&\geq& C\left(\varphi_{p,q}(t)\right)^{-1}||e^{-tP}||_{L^p_V,L^q_V},
\end{array}$$
where

$$\varphi_{p,q}(t)=\left\{\begin{array}{rcl}
t^{-\frac{\nu'}{2p}+\frac{\nu}{2q}},\,t\geq1\\
t^{-\frac{\nu}{2p}+\frac{\nu'}{2q}},\,t\leq1
\end{array}\right.$$
Thus, one can rephrase Proposition \ref{vEv} in the following way:
%Using these estimates for $q=p$ and for $q=\infty$ together with Proposition \ref{vEv}, and interpolating, one gets

\begin{Cor}\label{heat_weight}

Assume \eqref{D} and let $P=\Delta+\mathcal{V}$ be a Schr\"{o}dinger operator whose heat kernel have Gaussian estimates. Let $1\leq p\leq q\leq\infty$, then for every $t>0$,

$$||e^{-tP}||_{L^p_V,L^q_V}\lesssim \varphi_{p,q}(t).$$

\end{Cor}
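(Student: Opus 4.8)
The plan is to derive Corollary~\ref{heat_weight} directly from Proposition~\ref{vEv} by unwinding the definition of the weighted spaces $L^p_V$ and accounting for the discrepancy between $V(\cdot,\sqrt t)$ and $V(\cdot,1)$ via the doubling exponents $\nu,\nu'$ from \eqref{exp_D}. The starting point is the observation, already made in the text just before the statement, that for a Schr\"odinger operator $P=\Delta+\mathcal V$ whose heat kernel satisfies Gaussian estimates,
\begin{equation*}
\|e^{-tP}\|_{L^p_V,L^q_V} \lesssim \varphi_{p,q}(t)\,\|V(\cdot,\sqrt t)^{-1/q}e^{-tP}V(\cdot,\sqrt t)^{1/p}\|_{p,q},
\end{equation*}
where the ratio $V(\cdot,\sqrt t)/V(\cdot,1)$ has been estimated above and below by powers of $t$ using \eqref{exp_D} (a power $t^{\nu/2}$ or $t^{\nu'/2}$ depending on whether $t\ge 1$ or $t\le 1$), producing exactly the function $\varphi_{p,q}$.

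The second and final step is to invoke Proposition~\ref{vEv} with the choice $\gamma=-1/q$ and $\delta=1/p$, so that $\delta+\gamma=\frac1p-\frac1q\ge 0$ since $p\le q$, and $P$ is a non-negative self-adjoint operator on $L^2(M,\mu)$ with Gaussian heat kernel bounds. (If $\mathcal V$ is not non-negative one should note that $P=\Delta+\mathcal V$ is still self-adjoint and bounded below; since the statement only requires Gaussian estimates for the heat kernel, which in particular force $P\ge 0$ up to the usual conventions here, this is not an issue — alternatively one simply applies Proposition~\ref{vEv} to $P$ as stated.) Proposition~\ref{vEv} then gives
\begin{equation*}
\sup_{t>0}\|V(\cdot,\sqrt t)^{-1/q}e^{-tP}V(\cdot,\sqrt t)^{1/p}\|_{p,q}<\infty,
\end{equation*}
and combining this uniform bound with the previous display yields $\|e^{-tP}\|_{L^p_V,L^q_V}\lesssim\varphi_{p,q}(t)$ for all $t>0$, which is the assertion.

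There is essentially no hard step here: the corollary is a bookkeeping exercise translating the unweighted mapping estimate of Proposition~\ref{vEv} into the language of the spaces $L^p_V$. The only point requiring a little care — and the closest thing to an obstacle — is making the comparison between $V(\cdot,\sqrt t)$ and $V(\cdot,1)$ fully rigorous: one must check that, pointwise in $x$, the quantity $(V(x,\sqrt t)/V(x,1))^{\pm 1/q}$ and $(V(x,\sqrt t)/V(x,1))^{\pm 1/p}$ are controlled by $t$-powers \emph{uniformly in $x$}, which is precisely what \eqref{exp_D} provides (with the two regimes $t\ge1$ and $t\le1$ giving rise to the two branches of $\varphi_{p,q}$). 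Once that comparison is in hand the two operator-norm inequalities chain together immediately, and the endpoint cases $p=\infty$ or $q=\infty$ are handled by recalling $L^\infty_V(M)=L^\infty(M)$, so no separate argument is needed there.
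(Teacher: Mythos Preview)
Your proposal is correct and follows exactly the paper's own argument: the corollary is obtained by combining the computation displayed just before its statement (which rewrites $\|e^{-tP}\|_{L^p_V,L^q_V}$ in terms of $\|V(\cdot,\sqrt t)^{-1/q}e^{-tP}V(\cdot,\sqrt t)^{1/p}\|_{p,q}$ via \eqref{exp_D}, producing the factor $\varphi_{p,q}(t)$) with Proposition~\ref{vEv} applied with $\gamma=-1/q$, $\delta=1/p$. There is nothing to add.
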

Concerning Proposition \ref{gvEv}, one has the following:

\begin{Cor}\label{grad_weight}

Assume \eqref{D} and \eqref{UE}, and that for some $q\in (1,\infty)$,

$$\sup_{t>0}\sqrt{t}||\nabla e^{-t\Delta}||_{q,q}<\infty.$$
Then for every $1\leq p\leq q$, 

$$\sqrt{t}||\nabla e^{-t\Delta}||_{L^p_V,L^q_V}\lesssim \varphi_{p,q}(t).$$

\end{Cor}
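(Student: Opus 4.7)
The plan is to derive the statement as a direct transcription of Proposition \ref{gvEv} into the weighted-space language, mirroring exactly the algebra that yields Corollary \ref{heat_weight} from Proposition \ref{vEv}. The hypotheses of Proposition \ref{gvEv} are already the hypotheses of the corollary, so the real work is linear-algebraic bookkeeping with multiplication operators by powers of $V(\cdot,1)$ and $V(\cdot,\sqrt{t})$, together with the doubling bounds \eqref{exp_D}.

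First, I would apply Proposition \ref{gvEv} with the specific choice $\delta=1/p$ and $\gamma=-1/q$ (which is admissible since $\delta+\gamma=1/p-1/q$ and $1\leq p\leq q$), yielding
$$\sup_{t>0}\sqrt{t}\,\bigl\|V(\cdot,\sqrt{t})^{-1/q}\,\nabla e^{-t\Delta}\,V(\cdot,\sqrt{t})^{1/p}\bigr\|_{p,q}<\infty.$$
Next, I would observe that the map $f\mapsto V(\cdot,1)^{-1/p}f$ is an isometry from $L^p_V$ onto $L^p(M,\mu)$ (and likewise with $q$), so that
$$\bigl\|\nabla e^{-t\Delta}\bigr\|_{L^p_V\to L^q_V}=\bigl\|V(\cdot,1)^{-1/q}\,\nabla e^{-t\Delta}\,V(\cdot,1)^{1/p}\bigr\|_{p,q}.$$

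The central step is then the identity
$$V(\cdot,1)^{-1/q}\,\nabla e^{-t\Delta}\,V(\cdot,1)^{1/p}=\left(\frac{V(\cdot,\sqrt{t})}{V(\cdot,1)}\right)^{1/q}V(\cdot,\sqrt{t})^{-1/q}\,\nabla e^{-t\Delta}\,V(\cdot,\sqrt{t})^{1/p}\left(\frac{V(\cdot,1)}{V(\cdot,\sqrt{t})}\right)^{1/p}.$$
The outer factors are multiplication operators whose $L^\infty$ norms are controlled pointwise by \eqref{exp_D}: for $t\geq 1$ one has $(V(\cdot,\sqrt{t})/V(\cdot,1))^{1/q}\lesssim t^{\nu/(2q)}$ and $(V(\cdot,1)/V(\cdot,\sqrt{t}))^{1/p}\lesssim t^{-\nu'/(2p)}$, while for $t\leq 1$ the roles of $\nu$ and $\nu'$ swap, giving $t^{\nu'/(2q)}$ and $t^{-\nu/(2p)}$ respectively. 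In both regimes the product of these two factors is precisely $\varphi_{p,q}(t)$.

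Combining the bound from Step 1 with these $L^\infty\to L^\infty$ estimates on the pre- and post-multipliers gives
$$\sqrt{t}\,\bigl\|\nabla e^{-t\Delta}\bigr\|_{L^p_V\to L^q_V}\lesssim \varphi_{p,q}(t),$$
which is the claim. I do not anticipate a genuine obstacle: Proposition \ref{gvEv} does the analytic work (finite speed of propagation, complex interpolation in the weight, transmutation via $F_a$), and only elementary manipulations with doubling remain. The only point requiring care is keeping the two cases $t\geq 1$ and $t\leq 1$ straight, so that the correct pair of exponents $(\nu,\nu')$ versus $(\nu',\nu)$ appears and reassembles into $\varphi_{p,q}(t)$ rather than its reciprocal.
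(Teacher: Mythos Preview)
Your proposal is correct and is precisely the argument the paper intends: the computation displayed just before Corollary~\ref{heat_weight} (factoring in $V(\cdot,1)^{\pm 1/p}$, $V(\cdot,1)^{\pm 1/q}$ and bounding the ratios $V(\cdot,\sqrt{t})/V(\cdot,1)$ via \eqref{exp_D}) is exactly your ``central step'', and the paper explicitly says both corollaries are obtained by this same rephrasing of Propositions~\ref{vEv} and~\ref{gvEv}. One tiny caveat: Proposition~\ref{gvEv} is stated only for $1\leq p<q$, so strictly speaking your derivation (and the paper's) covers $p<q$; the endpoint $p=q$ is not literally supplied, though it is never used later.
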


In Section 7, we will need the following slight generalization of a perturbation result by T. Coulhon and N. Dungey \cite[Theorem 2.1]{CD}: 

\begin{Pro}\label{C-D}

Let $M$ satisfying \eqref{D} and \eqref{UE}. Let $H=-\div A\nabla$, $A$ symmetric, and $a=A-Id$, and assume that $a\in L_V^q\cap L^\infty$. Let $p_0>2$ such that $\nabla \Delta^{-1/2}$ and $\nabla (I+H)^{-1/2}$ are bounded on $L^p$ for every $p\in (2,p_0)$, and such that $\nabla H^{-1/2}$ is bounded on $L^p$, for $p\in (p_0',2)$. Then $\nabla H^{-1/2}$ is bounded on $L^p$ for every $p\in (2,p_0)$.

\end{Pro}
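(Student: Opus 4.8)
The plan is to adapt the Coulhon--Dungey perturbation theorem \cite[Theorem 2.1]{CD} to the weighted setting, replacing the unweighted $L^q$-integrability of the perturbation and the Nash/Sobolev-type inequalities used there by the space $L^q_V$ and the weighted semigroup and gradient bounds of Corollaries \ref{heat_weight} and \ref{grad_weight}. A preliminary remark used throughout: since $a\in L^\infty$, the matrix $A=\mathrm{Id}+a$ is uniformly elliptic, so the Dirichlet form of $H$ is comparable to that of $\Delta$; hence $H$ satisfies the same relative Faber--Krahn inequality \eqref{RFK} as $\Delta$ up to constants, and therefore, by \cite{G4}, $H$ satisfies \eqref{UE} as well. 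In particular $e^{-tH}$ is analytic on every $L^p$, $1<p<\infty$, has a bounded holomorphic functional calculus, and Corollary \ref{heat_weight} (whose proof uses only doubling and Gaussian bounds) applies to $e^{-tH}$.

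\textbf{Reduction to low frequencies.} Writing $H^{-1/2}-(I+H)^{-1/2}=c\int_0^\infty e^{-tH}(1-e^{-t})\,t^{-1/2}\,dt$ and applying $\nabla$, the term $\nabla(I+H)^{-1/2}$ is bounded on $L^p$, $p\in(2,p_0)$, by hypothesis. The factorisation $\nabla e^{-tH}=\nabla(I+H)^{-1/2}(I+H)^{1/2}e^{-tH}$ together with analyticity gives $\sqrt t\,\|\nabla e^{-tH}\|_{p\to p}\lesssim 1$ for $t\in(0,1)$, $p\in(2,p_0)$, whence (using $1-e^{-t}\lesssim t$ on $(0,1)$) the part $\int_0^1$ of the integral is bounded on $L^p$; and $\|\nabla e^{-tH}\|_{p\to p}\lesssim 1$ for $t\geq1$, so the only remaining point is the boundedness on $L^p$, $p\in(2,p_0)$, of the low-frequency operator $\int_1^\infty\nabla e^{-tH}\,t^{-1/2}\,dt$.

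\textbf{Perturbation formula.} With $B=H-\Delta=-\operatorname{div}(a\nabla)$, Duhamel's formula, combined with $\nabla e^{-uH}=\nabla H^{-1/2}\,H^{1/2}e^{-uH}$ (which lets one pull $\nabla H^{-1/2}$ out of the time integral), gives, as an identity of operators on $L^2$,
$$\nabla H^{-1/2}(I-\mathcal{F})=\nabla\Delta^{-1/2}-\nabla\Delta^{-1/2}\phi(\Delta),\qquad \mathcal{F}=c\int_1^\infty\frac{dt}{\sqrt t}\int_0^t H^{1/2}e^{-(t-s)H}\operatorname{div}\!\big(a\,\nabla e^{-s\Delta}\big)\,ds,$$
where $\nabla\Delta^{-1/2}\phi(\Delta)=\int_0^1\nabla e^{-t\Delta}t^{-1/2}dt$ comes from replacing $\int_1^\infty\nabla e^{-t\Delta}t^{-1/2}dt$ by $\nabla\Delta^{-1/2}$; here $\phi(\lambda)=\lambda^{1/2}\int_0^1 e^{-t\lambda}t^{-1/2}dt$ is bounded and holomorphic on a sector, so $\phi(\Delta)$ is bounded on $L^p$ by the functional calculus of $\Delta$ and the right-hand side is bounded on $L^p$, $p\in(2,p_0)$, by the hypothesis on $\nabla\Delta^{-1/2}$. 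Thus it suffices to prove that $\mathcal{F}$ is bounded on $L^p$ for $p\in(2,p_0)$ and that $I-\mathcal{F}$ is invertible there, for then $\nabla H^{-1/2}=(\nabla\Delta^{-1/2}-\nabla\Delta^{-1/2}\phi(\Delta))(I-\mathcal{F})^{-1}$ is bounded on $L^p$.

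\textbf{Estimate of the error, and main obstacle.} I would estimate $\mathcal{F}$ by decomposing the double integral dyadically and distributing norms as follows: $\nabla e^{-s\Delta}$ by Corollary \ref{grad_weight} (gaining the smoothing factor $\varphi_{\cdot,\cdot}$) and, near $s=0$, by the $L^{p'}$-boundedness of $\nabla\Delta^{-1/2}$, $p'\in(1,2]$, of Coulhon--Duong \cite{CDu}; multiplication by $a$ by Hölder in the weighted scale, $a:L^r_V\to L^s_V$ with $\tfrac1s=\tfrac1r+\tfrac1q$, gaining $\|a\|_{L^q_V}$, and on near-diagonal blocks $s\approx t$ a factor from $a\in L^\infty$; and $H^{1/2}e^{-uH}\operatorname{div}$ by writing it, using the self-adjointness of $H$ (here the symmetry of $A$ is essential), as the adjoint of $\nabla H^{-1/2}He^{-uH}$ composed with $e^{-uH/2}$ factors, so that the only Riesz transform of $H$ that appears acts on the dual exponent $p'\in(p_0',2)$, where it is bounded by the third hypothesis; the remaining $H$-semigroup factors are handled by Corollary \ref{heat_weight}. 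Since $a\in L^q_V$ for one $q$ and $a\in L^\infty$, one may take $q$ as large as needed, and one checks that the powers of $t$, $t-s$, $s$ produced by $t^{-1/2}$, by $\operatorname{div}$ and $H^{1/2}$ (worth a half-power each), by the functions $\varphi_{\cdot,\cdot}$ and by the $a$-gain combine to a convergent dyadic series. The invertibility of $I-\mathcal{F}$ on $L^p$ is then obtained by splitting $a=a_0+a_\infty$ with $a_0$ compactly supported and $\|a_\infty\|_{L^q_V\cap L^\infty}$ small (possible since $a\in L^q_V$ forces the tails to be small): $\mathcal{F}_{a_\infty}$ has small operator norm, so $I-\mathcal{F}_{a_\infty}$ is invertible by Neumann series, while $\mathcal{F}_{a_0}$, built from a compactly supported perturbation, is handled by a local argument together with the $\nabla(I+H)^{-1/2}$ hypothesis. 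The hard part is precisely this control of the error: tracking the weighted exponents through the double time integral so that every factor is estimated by one of the available bounds (the three hypotheses, the $L^r$, $r\in(1,2]$, boundedness of $\nabla\Delta^{-1/2}$, and Corollaries \ref{heat_weight}--\ref{grad_weight}), avoiding circularity, and checking convergence near the diagonal $s\approx t$, where the singularity $(t-s)^{-1}$ must be absorbed by the strong smoothing of $e^{-s\Delta}$ at large times. The endpoint $p_0$ is lost because the spectral-multiplier step and the interpolation built into Corollaries \ref{heat_weight}--\ref{grad_weight} are open-ended.
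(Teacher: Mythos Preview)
Your overall strategy---adapt Coulhon--Dungey's perturbation argument to the weighted scale $L^q_V$---is exactly right, and several of your ingredients (uniform ellipticity of $A$, Gaussian bounds for $e^{-tH}$, use of Corollaries \ref{heat_weight} and \ref{grad_weight}) are the ones the paper uses. But you have made the task much harder than necessary, and in doing so you have left the core step unproved.

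The paper's observation is that in the proof of \cite[Theorem~2.1]{CD}, the ultracontractivity estimate $\|e^{-t\Delta}\|_{1,\infty}\lesssim t^{-D/2}$ and the unweighted assumption $a\in L^q$ are used in exactly \emph{one} place: to establish the single resolvent bound
\[
\|a\,\nabla(I+t\Delta)^{-1}\|_{p\to p}\;\leq\;C\,t^{-\frac12-\varepsilon},\qquad t>0,\ p\in(2,p_0),
\]
for some $\varepsilon>0$. Once this estimate is in hand, the remainder of the Coulhon--Dungey proof runs verbatim. The paper then proves this inequality directly: write $(I+t\Delta)^{-1}=\int_0^\infty e^{-s}e^{-st\Delta}\,ds$, apply H\"older with the weight $V(\cdot,1)^{1/q}$ so that the factor $\|a\|_{L^q_V}$ appears, and estimate $\|V(\cdot,1)^{1/q}\nabla e^{-st\Delta}\|_{p\to r}$ (with $\tfrac1p=\tfrac1q+\tfrac1r$) via Proposition~\ref{gvEv}. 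Choosing $q$ large enough (legitimate since $a\in L^q_V\cap L^\infty$) forces $r\in(2,p_0)$, so the gradient bound applies, and the resulting powers of $st$ yield the claimed $t^{-1/2-\varepsilon}$ decay. That is the whole new content; everything else is literally ``follows the lines of \cite[Theorem~2.1]{CD}''.

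By contrast, you are attempting to reconstruct the entire Duhamel/low-frequency machinery of \cite{CD} from scratch, and you acknowledge that the ``hard part''---convergence of the dyadic decomposition of $\mathcal F$ and invertibility of $I-\mathcal F$---is not actually carried out. Your splitting $a=a_0+a_\infty$ to invert $I-\mathcal F$ is not the mechanism used in \cite{CD}, and the claim that the compactly supported piece ``is handled by a local argument'' is not justified. More importantly, none of this is needed: isolate the one estimate above, prove it with the weighted H\"older/Proposition~\ref{gvEv} argument, and then simply cite \cite{CD} for the rest.
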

T. Coulhon and N. Dungey show this result under the stronger assumptions that $a\in L^q\cap L^\infty$ and that for every $t\geq1$, $||e^{-t\Delta}||_{1,\infty}\leq Ct^{-D/2}$. Such an ultracontractivity estimate for $e^{-t\Delta}$ is known to be equivalent to a Nash inequality at infinity (\cite{CSC}), however in this paper, we want to work in greater generality and avoid this type of hypothesis. Therefore, we have to work with the weighted spaces $L^p_V$.

\begin{proof}

The only part of the proof of \cite[Theorem 2.1]{CD} where the ultracontractivity estimate $||e^{-t\Delta}||_{1,\infty}\leq Ct^{-D/2}$ and the hypothesis $a\in L^q$ are used, is to show that for $p\in (2,p_0)$, there is $\varepsilon>0$  such that

$$||a\nabla (I+t\Delta)^{-1}||_{p,p}\leq Ct^{-\frac{1}{2}-\varepsilon},\qquad\forall t>0.$$
Let us show how to prove a similar estimate in our context, which allows us to make Coulhon and Dungey's proof work. Write

$$(I+t\Delta)^{-1}=\int_0^\infty e^{-s}e^{-st\Delta}\,ds,$$
so that

$$||a\nabla (I+t\Delta)^{-1}||_{p,p}\leq \int_0^\infty e^{-s}||a\nabla e^{-st\Delta}||_{p,p}\,ds.$$
Using H\"{o}lder's inequality,

$$||a\nabla e^{-st\Delta}||_{p,p}\leq ||aV(\cdot,1)^{-1/q}||_q||V(\cdot,1)^{1/q}\nabla e^{-st\Delta}||_{p,r},$$
where $r$ is defined by $\frac{1}{p}=\frac{1}{q}+\frac{1}{r}$. Notice that 

$$||aV(\cdot,1)^{-1/q}||_q=||a||_{L^q_V}<\infty.$$
Since $a\in L^q_V\cap L^\infty$, by interpolation we can assume that $q$ is large enough so that $2<r<p_0$. Then, the Riesz transform $\nabla \Delta^{-1/2}$ is bounded on $L^r$, and by a classical argument involving analyticity of the heat semi-group $e^{-t\Delta}$ on $L^p$,

$$\sup_{t>0}\sqrt{t}||\nabla e^{-t\Delta}||_{r,r}<\infty.$$
By \eqref{exp_D} and Proposition \ref{gvEv},

$$\begin{array}{rcl}
||V(\cdot,1)^{1/q}\nabla e^{-st\Delta}||_{p,r}&=&||\left(\frac{V(\cdot,\sqrt{st})}{V(\cdot,1)}\right)^{-1/q}V(\cdot,\sqrt{st})^{1/q}\nabla e^{-st\Delta}||_{p,r}\\\\
&\leq& C(st)^{-\frac{\nu'}{2q}-\frac{1}{2}}+C(st)^{-\frac{\nu}{2q}-\frac{1}{2}}\\\\
&\leq& C(st)^{-\frac{1}{2}-\varepsilon_1}+C(st)^{-\frac{1}{2}-\varepsilon_2},
\end{array}$$
where $\varepsilon_1=\frac{\nu'}{2q}$, $\varepsilon_2=\frac{\nu}{2q}$. Therefore, if $q$ is big enough so that $\varepsilon_1<\frac{1}{2}$, $\varepsilon_2<\frac{1}{2}$,

$$||a\nabla (I+t\Delta)^{-1}||_{p,p}\leq C(t^{-\frac{1}{2}-\varepsilon},\qquad\forall t>0,$$
with $\varepsilon=\min(\varepsilon_1,\varepsilon_2)$. With this at hand, the proof of Proposition \ref{C-D} follows the lines of the proof of \cite[Theorem 2.1]{CD}.

\end{proof}
Let us conclude this section by presenting a sufficient condition for a potential to belong to the Kato class at infinity, which generalizes Example \ref{Sob_Kato} to manifolds that do not satisfy \eqref{S}:

\begin{Pro}\label{cond_Kato}

Let $M$ satisfying \eqref{D} and \eqref{UE}, and let $\mathcal{V}\in L_V^{\frac{\nu'}{2}-\varepsilon}\cap L_V^{\frac{\nu}{2}+\varepsilon}$, for some $\varepsilon>0$. Then $\mathcal{V}\in K^\infty(M)$, and there is $q\in [1,\infty)$ such that $\Delta^{-1}|\mathcal{V}|\in L^q_V$.

\end{Pro}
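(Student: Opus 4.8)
## Proof plan

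The plan is to imitate the proof of Example \ref{Sob_Kato}, replacing the global Sobolev inequality \eqref{S} and the mapping/Gagliardo--Nirenberg inequalities \eqref{mapping}, \eqref{GN} by their weighted analogues, which in turn follow from the weighted heat kernel bounds of Corollary \ref{heat_weight} via subordination. The key point is that under \eqref{D} and \eqref{UE}, Corollary \ref{heat_weight} gives $\|e^{-t\Delta}\|_{L^p_V\to L^q_V}\lesssim \varphi_{p,q}(t)$ for $1\le p\le q\le\infty$, and by writing $\Delta^{-1}=\int_0^\infty e^{-t\Delta}\dt$ one obtains, after checking convergence of the time integral at both ends, a bounded map $\Delta^{-1}:L^p_V\to L^q_V$ for suitable pairs $(p,q)$, together with a bound $\Delta^{-1}:L^p_V\to L^\infty$ when the parameters make the integral of $\varphi_{p,q}$ over $(0,\infty)$ converge for $q=\infty$.

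First I would record the precise exponent bookkeeping. Split $\Delta^{-1}|\mathcal{V}|=\int_0^1 e^{-t\Delta}|\mathcal{V}|\dt+\int_1^\infty e^{-t\Delta}|\mathcal{V}|\dt$. For the small-time part, $\varphi_{p,q}(t)=t^{-\frac{\nu}{2p}+\frac{\nu'}{2q}}$, which is integrable near $0$ provided $\frac{\nu}{2p}-\frac{\nu'}{2q}<1$; using $\mathcal{V}\in L_V^{\frac{\nu}{2}+\varepsilon}$, i.e. $p=\frac{\nu}{2}+\varepsilon$, one can reach $q=\infty$ since then $\frac{\nu}{2p}<1$. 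For the large-time part, $\varphi_{p,q}(t)=t^{-\frac{\nu'}{2p}+\frac{\nu}{2q}}$, integrable near $\infty$ provided $\frac{\nu'}{2p}-\frac{\nu}{2q}>1$; using $\mathcal{V}\in L_V^{\frac{\nu'}{2}-\varepsilon}$, i.e. $p=\frac{\nu'}{2}-\varepsilon$, one gets $\frac{\nu'}{2p}>1$, so again $q=\infty$ is attainable. (Here I use that, just as in the Sobolev case, having a bound in a range of $p$'s around $\frac{\nu}{2}$ and around $\frac{\nu'}{2}$ lets one choose a finite $q$ by interpolation of the two endpoint mapping properties.) This shows $\Delta^{-1}|\mathcal{V}|\in L^\infty(M)=L^\infty_V(M)$; moreover, interpolating with the obvious fact that $\Delta^{-1}|\mathcal{V}|\in L^1_{loc}$ one concludes $\Delta^{-1}|\mathcal{V}|\in L^q_V$ for some finite $q$ as well, giving the last assertion of the Proposition.

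Then I would conclude exactly as in Example \ref{Sob_Kato}: set $\mathcal{V}_k=\mathcal{V}\chi_k$ and $\mathcal{V}_k^\infty=\mathcal{V}-\mathcal{V}_k$, which has support in $\Omega_k^*$ and satisfies $\|\mathcal{V}_k^\infty\|_{L_V^{\nu'/2-\varepsilon}}+\|\mathcal{V}_k^\infty\|_{L_V^{\nu/2+\varepsilon}}\to 0$ as $k\to\infty$ by dominated convergence. Applying the weighted bound $\Delta^{-1}:L_V^{\nu'/2-\varepsilon}\cap L_V^{\nu/2+\varepsilon}\to L^\infty$ (whose operator norm is independent of $k$) to $\mathcal{V}_k^\infty$ yields
$$\sup_{x\in M}\int_{\Omega_k^*} G(x,y)|\mathcal{V}(y)|\dy=\|\Delta^{-1}|\mathcal{V}_k^\infty|\|_\infty\to 0,$$
which is precisely $\mathcal{V}\in K^\infty(M)$.

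The main obstacle I anticipate is the exponent/integrability bookkeeping at the two ends of the time integral: one must verify that the exponents $\frac{\nu'}{2}-\varepsilon$ and $\frac{\nu}{2}+\varepsilon$ are exactly what is needed to make $\int_0^1$ and $\int_1^\infty$ of the relevant $\varphi_{p,q}$ converge while still landing in $L^\infty$ (rather than merely some $L^q_V$), and to handle the case $\nu'\le 2$ (where $L_V^{\nu'/2-\varepsilon}$ must be interpreted, and one should instead interpolate to a finite target exponent and then bootstrap). A careful but routine case analysis of the sign of $-\frac{\nu'}{2p}+\frac{\nu}{2q}$ and $-\frac{\nu}{2p}+\frac{\nu'}{2q}$, together with the interpolation trick used to obtain a finite $q$ from two endpoint estimates, disposes of this; everything else is a direct transcription of the Sobolev-case argument.
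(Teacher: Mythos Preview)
Your overall strategy --- write $\Delta^{-1}=\int_0^\infty e^{-t\Delta}\dt$, split at $t=1$, and feed in the weighted heat kernel bounds of Corollary~\ref{heat_weight} with the two source exponents $\frac{\nu}{2}+\varepsilon$ (small time) and $\frac{\nu'}{2}-\varepsilon$ (large time) --- is exactly the paper's approach, and your derivation of the $L^\infty$ bound and of $\mathcal{V}\in K^\infty(M)$ via the tail $\mathcal{V}_k^\infty$ is correct and matches the paper.

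There is, however, a genuine gap in the last claim. You write that ``interpolating with the obvious fact that $\Delta^{-1}|\mathcal{V}|\in L^1_{loc}$ one concludes $\Delta^{-1}|\mathcal{V}|\in L^q_V$ for some finite $q$''. This does not work: $L^1_{loc}$ is not a global Banach space on a non-compact manifold, and there is no interpolation between $L^1_{loc}$ and $L^\infty$ that yields membership in any global $L^q_V$. Knowing only $\Delta^{-1}|\mathcal{V}|\in L^\infty\cap L^1_{loc}$ tells you nothing about its decay at infinity.

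The fix is the one the paper uses, and which you in fact already have set up: simply rerun the same subordination argument with a \emph{finite} target exponent $q$. By Corollary~\ref{heat_weight},
\[
\|\Delta^{-1}|\mathcal{V}|\|_{L^q_V}
\;\le\;
\|\mathcal{V}\|_{L_V^{\frac{\nu}{2}+\varepsilon}}\int_0^1 t^{-\frac{\nu}{\nu+2\varepsilon}+\frac{\nu'}{2q}}\dt
\;+\;
\|\mathcal{V}\|_{L_V^{\frac{\nu'}{2}-\varepsilon}}\int_1^\infty t^{-\frac{\nu'}{\nu'-2\varepsilon}+\frac{\nu}{2q}}\dt.
\]
The small-time integral already converges (its exponent is strictly above $-1$ for $q=\infty$, hence a fortiori for finite $q$). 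For the large-time integral, the exponent at $q=\infty$ is $-\frac{\nu'}{\nu'-2\varepsilon}<-1$, so by continuity it remains $<-1$ for all sufficiently large finite $q$. This gives $\Delta^{-1}|\mathcal{V}|\in L^q_V$ directly, with no interpolation needed. Replace your $L^1_{loc}$ sentence with this computation and the proof is complete.
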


\begin{proof}

Let us show that for some constant $C$ independent of $\mathcal{V}$,

\begin{equation}\label{inf_est}
||\Delta^{-1}|\mathcal{V}|||_\infty\leq C(||\mathcal{V}||_{ L_V^{\frac{\nu'}{2}-\varepsilon}}+||\mathcal{V}||_{L_V^{\frac{\nu}{2}+\varepsilon}}).
\end{equation}
Write

$$\Delta^{-1}=\int_0^\infty e^{-t\Delta}\,dt,$$
so that

$$||\Delta^{-1}|\mathcal{V}|||_\infty\leq \int_0^\infty ||e^{-t\Delta}|\mathcal{V}|||_{\infty}\,dt.$$
Using Corollary \ref{heat_weight} with $P=\Delta$, we get

$$\begin{array}{lll}
||\Delta^{-1}|\mathcal{V}|||_\infty&\leq& \left(\int_0^1 t^{-\frac{\nu}{\nu+2\varepsilon}}\,dt\right)||\mathcal{V}||_{L_V^{\frac{\nu}{2}+\varepsilon}}+ \left(\int_1^\infty t^{-\frac{\nu'}{\nu'-2\varepsilon}}\,dt\right)||\mathcal{V}||_{L_V^{\frac{\nu'}{2}-\varepsilon}}\\\\
&\leq&  C(||\mathcal{V}||_{L_V^{ \frac{\nu'}{2}-\varepsilon}}+||\mathcal{V}||_{L_V^{\frac{\nu}{2}+\varepsilon}}),
\end{array}$$
which shows \eqref{inf_est}. For $k\geq 0$, let $\mathcal{V}_k=\mathcal{V}\chi_k$. Then 

$$\lim_{k\to \infty}\left(||\mathcal{V}_k||_{L_V^{ \frac{\nu'}{2}-\varepsilon}}+||\mathcal{V}_k||_{L_V^{\frac{\nu}{2}+\varepsilon}}\right)=0,$$
which implies, by \eqref{inf_est} applied to $\mathcal{V}_k$,

$$\lim_{k\to \infty}\sup_{x\in M}\int_{\Omega_k^*}G(x,y)|\mathcal{V}(y)|\,dy=0,$$
i.e. $\mathcal{V}\in K^\infty(M)$. Let us now check that $\Delta^{-1}\mathcal{V}\in L^q_V$ for some $q<\infty$. We use again Corollary \ref{heat_weight}, to get

$$||\Delta^{-1}|\mathcal{V}|||_{L_V^q}\leq \left(\int_0^1 t^{-\frac{\nu}{\nu+2\varepsilon}+\frac{\nu}{2q}}\,dt\right)||\mathcal{V}||_{L_V^{\frac{\nu}{2}+\varepsilon}}+ \left(\int_1^\infty t^{-\frac{\nu'}{\nu'-2\varepsilon}+\frac{\nu'}{2q}}\,dt\right)||\mathcal{V}||_{L_V^{\frac{\nu'}{2}-\varepsilon}}.$$
Consequently, if $q$ is big enough so that

$$-\frac{\nu'}{\nu'-2\varepsilon}+\frac{\nu'}{2q}<-1,$$
then $||\Delta^{-1}|\mathcal{V}|||_{L_V^q}<\infty.$

\end{proof}

\section{Riesz transform with potential}

{\em From now on, $\mu$ is assumed to be the Riemannian measure on $M$.}\\

\noindent In this section, we will obtain boundedness and unboundedness results for the Riesz transform with potential $d(\Delta+\mathcal{V})^{-1/2}$. Let us start by introducing a definition: we say that $V$ satisfies condition $(L_p)$ if there is a constant $C$ such that for every $x\in M$,

\begin{equation}\label{L_p}\tag{$L_p$}
||\mathcal{V}||_{L^p(B(x,1))}\lesssim V(x,1)^{1/p'},
\end{equation}
where $\frac{1}{p}+\frac{1}{p'}=1$. Obviously, the validity of \eqref{L_p} implies that $V\in L_{loc}^p$. Also, if $M$ has Ricci curvature bounded from below and \eqref{NC} holds, then the condition \eqref{L_p} for $p=\infty$, is equivalent to $\mathcal{V}\in L^\infty$. Let us now state our main technical result, from which will be derived various consequences:

\begin{Thm}\label{main1}

Let $M$ be a $2$-hyperbolic manifold, satisfying \eqref{D} and \eqref{UE}. Assume that for some $p_0>2$, the Riesz transform $d\Delta^{-1/2}$ on $M$ is bounded on $L^p$, for all $p\in(1,p_0)$. Recall the exponents $\nu,\nu'$ from \eqref{exp_D}. Let $\mathcal{V}$ be a subcritical potential, and consider the following two assumptions:

\begin{enumerate}

\item[(i)] $p_0\leq \nu$, $\nu'>2$, the Ricci curvature on $M$ is bounded from below, and $\mathcal{V}\in L^{\frac{\nu'}{2}-\varepsilon}\left(M,\frac{d\mu(x)}{V(x,1)}\right)\cap L^{\frac{\nu}{2}+\varepsilon}\left(M,\frac{d\mu(x)}{V(x,1)}\right)$ satisfies \eqref{L_p} for some $p>N$, $N>2$ being the topological dimension of $M$.

\item[(ii)] $p_0>\nu$, and $\mathcal{V}\in L^{q_1}\left(M,\frac{d\mu(x)}{V(x,1)}\right)\cap L^{q_2}\left(M,\frac{d\mu(x)}{V(x,1)}\right)$, where 

$$q_1=\frac{\nu'p_0}{p_0+\nu}-\varepsilon,\,\,q_2=\max\left(\frac{p_0}{2},\frac{\nu p_0}{p_0+\nu'}+\varepsilon\right).$$

\end{enumerate}
Assume that either $\mathrm{(i)}$ or $\mathrm{(ii)}$ is satisfied. Let $h\sim 1$ be the solution of $(\Delta+\mathcal{V})u=0$, provided by Theorem \ref{pos_sol}. Then the operator

$$d(\Delta+\mathcal{V})^{-1/2}-(d\log h)(\Delta+\mathcal{V})^{-1/2}$$
is bounded on $L^p$, for every $p\in [2,p_0)$. Equivalently, for every $p\in [2,p_0)$, the following inequality holds:

\begin{equation}\label{imp_Riesz}
||du-(d\log h)u||_p\leq C||(\Delta+\mathcal{V})^{1/2}u||_p,\qquad\forall u\in C_0^\infty(M).
\end{equation}

\end{Thm}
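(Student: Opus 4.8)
The strategy is to transfer the problem to the $h$-transformed operator and reduce everything to known estimates for the Laplacian, plus a control of the error introduced by the weight $h$. Set $P = \Delta + \mathcal{V}$ and let $h \sim 1$ be the positive solution of $Pu = 0$ produced by Theorem \ref{pos_sol}. The $h$-transform $P_h = h^{-1}Ph$ is the weighted Laplacian $\Delta_{h^2\mu}$, and since $h \sim 1$ the measures $h^2\mu$ and $\mu$ are mutually absolutely continuous with bounded density, so the Riesz transform $dP_h^{-1/2}$ on $L^p(h^2\mu)$ is comparable to working on $L^p(\mu)$. A direct computation gives
\begin{equation*}
d\bigl(h v\bigr) = h\,dv + v\,dh = h\bigl(dv + (d\log h)\,v\bigr),
\end{equation*}
so that, writing $u = hv$, one has $du - (d\log h)u = du - (d\log h)hv = h\,dv$; hence the operator $d(\Delta+\mathcal{V})^{-1/2} - (d\log h)(\Delta+\mathcal{V})^{-1/2}$ is, after conjugation by $T_h$, exactly $(dP_h^{-1/2})$ composed with multiplication by bounded functions. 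So the inequality \eqref{imp_Riesz} is equivalent to the boundedness on $L^p$ of the Riesz transform $dP_h^{-1/2} = d\Delta_{h^2\mu}^{-1/2}$ for $p \in [2,p_0)$.

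The next step is to realize $\Delta_{h^2\mu}$ as a perturbation of $\Delta_\mu$ of the divergence-form type handled by Proposition \ref{C-D}. Indeed $\Delta_{h^2\mu} u = -h^{-2}\div(h^2\nabla u)$, and since $h \sim 1$ we may try to absorb the weight: the natural move is to conjugate once more, or rather to compare $\Delta_{h^2\mu}$ with an operator $H = -\div(A\nabla\cdot)$ where $A = h^2\,\mathrm{Id}$ (up to the harmless discrepancy between the reference measures, which is bounded). Then $a := A - \mathrm{Id} = (h^2-1)\,\mathrm{Id}$. To invoke Proposition \ref{C-D} we need $a \in L^q_V \cap L^\infty$ for suitable $q$; the $L^\infty$ bound is immediate from $h \sim 1$, and the $L^q_V$ bound is where the hypotheses (i) or (ii) on $\mathcal{V}$ enter: from the integral equation $h = 1 - \Delta^{-1}(\mathcal{V}h)$ in Theorem \ref{pos_sol} and $h \sim 1$ one gets $|h^2 - 1| \lesssim |h-1| \lesssim \Delta^{-1}|\mathcal{V}|$ pointwise, and Proposition \ref{cond_Kato} (under (i)) or a direct application of Corollary \ref{heat_weight} with the exponents $q_1, q_2$ (under (ii)) shows $\Delta^{-1}|\mathcal{V}| \in L^q_V$ for some finite $q$, which can moreover be taken as large as we like by interpolating against $L^\infty$. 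One also needs the boundedness of $\nabla\Delta^{-1/2}$ and $\nabla(I+H)^{-1/2}$ on $L^p$ for $p \in (2,p_0)$ (the first is a hypothesis; the second follows from the first together with the perturbative nature of $H$ — e.g. it is part of the bootstrap or handled separately), and the boundedness of $\nabla H^{-1/2}$ on $L^p$ for $p \in (p_0',2)$, which follows from the $L^2$-duality since the heat kernel of $\Delta_{h^2\mu}$ has Gaussian estimates (Theorem \ref{heat_kernel}) so by \cite{CDu} its Riesz transform is bounded on $L^s$ for $s \in (1,2]$. Proposition \ref{C-D} then yields boundedness of $\nabla H^{-1/2} = \nabla\Delta_{h^2\mu}^{-1/2}$ on $L^p$ for $p \in (2,p_0)$, which is what we want (the value $p=2$ being clear from self-adjointness).

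The main obstacle I expect is the careful bookkeeping around the \emph{two} reference measures and the \emph{two} gradients involved: the Riesz transform $dP_h^{-1/2}$ naturally lives on $L^p(h^2\mu)$, while Proposition \ref{C-D} is stated for the operator $-\div(A\nabla\cdot)$ with the Riemannian (i.e. unweighted) divergence and the $L^p(\mu)$ spaces. One must check that passing between these introduces only multiplication by functions comparable to $1$ — which is true because $h \sim 1$, but the identity $\Delta_{h^2\mu} = T_h^{-1}(\Delta+\mathcal{V})T_h$ uses the $\mu$-divergence in a way that must be reconciled with the $h^2\mu$-divergence appearing in \eqref{Ph}; this is bookkeeping rather than a genuine difficulty, but it is the place where errors creep in. A secondary point is verifying that the exponents $q_1, q_2$ in hypothesis (ii), resp. the pair $\tfrac{\nu'}{2}-\varepsilon, \tfrac{\nu}{2}+\varepsilon$ in (i), are precisely what is needed to land $\Delta^{-1}|\mathcal{V}|$ in some $L^q_V$ and, simultaneously, to guarantee (via Theorem \ref{heat_kernel} and the Kato-class membership from Proposition \ref{cond_Kato}) that the solution $h \sim 1$ exists in the first place and that the heat kernel of $\Delta+\mathcal{V}$ has Gaussian estimates — the roles of the lower exponent ("smallness at infinity") and the upper exponent (local regularity / integrability near the scale $1$) should be disentangled explicitly. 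Once the measure-theoretic reduction is in place, the rest is an application of the already-established machinery.
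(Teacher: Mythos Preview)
Your overall strategy is correct and matches the paper's: conjugate by $h$ to reduce to the Riesz transform of the weighted Laplacian $\Delta_{h^2\mu}$, then apply the Coulhon--Dungey perturbation result (Proposition~\ref{C-D}) with $A=h^2$. The measure-theoretic bookkeeping you flag as ``the main obstacle'' is in fact harmless, exactly for the reason you give ($h\sim 1$), and the paper dispatches it in a few lines.

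The real gap is the step you wave away: the hypothesis of Proposition~\ref{C-D} that $\nabla(I+H)^{-1/2}$ be bounded on $L^p$ for $p\in(2,p_0)$. You write that this ``follows from the first together with the perturbative nature of $H$ --- e.g.\ it is part of the bootstrap or handled separately.'' It is not a bootstrap, and it is where the specific assumptions (i) and (ii) are actually used. In the paper this is Lemma~\ref{local}, and its proof is the technical heart of the argument: one conjugates back, writes $D(\Delta+\mathcal{V}+1)^{-1/2}=(dh^{-1})(\Delta+\mathcal{V}+1)^{-1/2}+h^{-1}d(\Delta+\mathcal{V}+1)^{-1/2}$, handles the second term via an Assaad--Ouhabaz-type comparison $(\Delta+1)^{1/2}(\Delta+\mathcal{V}+1)^{-1/2}$, and handles the first term by proving integrability of $dh$. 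That last point is Lemma~\ref{h_prop}: under (i) one gets $dh\in L^\infty$ via the Cheng--Yau gradient estimate for the Green function (this is precisely why the Ricci lower bound and the condition $(L_p)$ with $p>N$ appear in (i)), while under (ii) one gets $dh\in L^r_V$ for $r$ close to $p_0$ via Corollary~\ref{grad_weight}, and the exponents $q_1,q_2$ in (ii) are chosen exactly so that the resulting time integrals converge. Your proposal never mentions $dh$, and treats the exponents in (i)/(ii) as serving only to put $\Delta^{-1}|\mathcal{V}|$ in some $L^q_V$; that is only part of their role.

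In short: the architecture is right, but the local Riesz transform for $\Delta_{h^2}$ is not free, and establishing it requires the gradient estimates on $h$ that you have not accounted for.
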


%and such that for some $\alpha>2$, the following lower bound for the volume of balls with large radius holds:

%\begin{equation}\label{vol}\tag{V}
%V(x,r)\geq Cr^\alpha,\qquad\forall x\in M,\,\forall r\geq1.
%\end{equation}
%Assume that on $M$, the Riesz transform $d\Delta^{-1/2}$ is bounded on $L^p$ for every $p\in (2,p_0)$. Let $V\in L^\infty_{uloc}(M)$ be a potential bounded from below with $\mathcal{V}_-\in K^\infty(M)$ and $\mathcal{V}_+$ $(H,1)-$bounded, such that $\Delta+\mathcal{V}$ is subcritical, and such that for some $q\in [1,\infty)$, $\Delta^{-1}|V|\in L^q$. Let $h\sim 1$ be the positive solution of $(\Delta+\mathcal{V})u=0$ provided by Theorem \ref{pos_sol}. Then the operator

%$$d(\Delta+\mathcal{V})^{-1/2}-(d\log h)(\Delta+\mathcal{V})^{-1/2}$$
%is bounded on $L^p$, for every $p\in (2,p_0)$. Equivalently, for every $p\in (2,p_0)$, the following inequality holds:

%\begin{equation}\label{imp_Riesz}
%||du-(d\log h)u||_p\leq C||(\Delta+\mathcal{V})^{1/2}u||_p,\qquad\forall u\in C_0^\infty(M).
%\end{equation}

%\end{Thm}
One of the main features of Theorem \ref{main1} is to provide an alternative inequality \eqref{imp_Riesz} in the case where the Riesz transform $d(\Delta+\mathcal{V})^{-1/2}$ is unbounded on $L^p$. In turn, as we shall see, the validity of the inequality \eqref{imp_Riesz} gives a necessary criteria (the $p-$hyperbolicity of $M$) for the boundedness of $d(\Delta+\mathcal{V})^{-1/2}$ on $L^p$, which will turn out to be sufficient under further assumptions on $\mathcal{V}$. 

\begin{Cor}\label{main2}

Under the assumptions of Theorem \ref{main1} and if $\mathcal{V}\not\equiv0$, a necessary condition for $d(\Delta+\mathcal{V})^{-1/2}$ to bounded on $L^p$, for some $p\in (2,p_0)$ is that  $M$ is $p$-hyperbolic.

\end{Cor}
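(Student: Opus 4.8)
The plan is to derive the necessary condition by combining the alternative inequality \eqref{imp_Riesz} of Theorem \ref{main1} with the hypothesis that $d(\Delta+\mathcal{V})^{-1/2}$ is bounded on $L^p$, and then exploit the fact that $h\sim 1$ together with the characterization of $p$-hyperbolicity via the validity of an $L^p$-Hardy-type inequality (or equivalently via the boundedness of $\Delta^{-1/2}:L^p\to L^p_{\loc}$, as in Proposition \ref{D_p_hyp}). First I would fix $p\in (2,p_0)$ and assume, for contradiction, that $M$ is $p$-parabolic. Since $d(\Delta+\mathcal{V})^{-1/2}$ is bounded on $L^p$ by hypothesis, and Theorem \ref{main1} gives that $d(\Delta+\mathcal{V})^{-1/2}-(d\log h)(\Delta+\mathcal{V})^{-1/2}$ is bounded on $L^p$, subtracting the two operators shows that the zero-order operator $(d\log h)(\Delta+\mathcal{V})^{-1/2}$ is bounded on $L^p$; equivalently,
$$
\|(d\log h)\,u\|_p\leq C\,\|(\Delta+\mathcal{V})^{1/2}u\|_p,\qquad\forall u\in C_0^\infty(M).
$$

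Next I would translate this back to the Laplacian. Using the $h$-transform with respect to the positive solution $h\sim 1$ of $(\Delta+\mathcal{V})h=0$, the operator $\Delta+\mathcal{V}$ is unitarily equivalent (on the appropriate weighted spaces, which are comparable to the unweighted ones because $h\sim 1$) to the weighted Laplacian $\Delta_{h^2\mu}=P_h$, and under $h\sim 1$ this in turn is comparable to $\Delta=\Delta_\mu$ at the level of the relevant $L^p$ norms; in particular $\|(\Delta+\mathcal{V})^{1/2}u\|_p\sim\|\Delta^{1/2}(h^{-1}u)\|_p$ up to lower order terms controlled by $d\log h\in L^\infty$ (note $d\log h=dh/h$ is bounded since $h\sim 1$ and, by Theorem \ref{pos_sol} and elliptic regularity, $\nabla h$ is locally bounded — one should make sure $\nabla\log h\in L^\infty$, which follows from $a=h^2-1$ lying in the good weighted class and the gradient heat-kernel bounds). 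The point is that the inequality above, combined with the boundedness of the Riesz transform $d\Delta^{-1/2}$ on $L^{p'}$ (which holds for $p'\in(1,2]$ by \cite{CDu} since $M$ satisfies \eqref{D} and \eqref{UE}), forces an $L^p$ inequality of the form $\|\,wu\,\|_p\leq C\|\nabla u\|_p$ for a nonzero weight $w$ related to $|d\log h|^p$. But $\mathcal{V}\not\equiv 0$ and $h\not\equiv$ const (since $(\Delta+\mathcal V)h=0$ with $\mathcal V\neq0$ and $\Delta h\neq -\mathcal V h$ would fail if $h$ were constant and $\mathcal V\neq0$), so $d\log h\not\equiv 0$, hence $w$ is a genuinely nonzero nonnegative function.

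Finally I would invoke the characterization of $p$-hyperbolicity: the existence of a nonzero nonnegative $\rho$ with $\int_M\rho|u|^p\,d\mu\leq\int_M|\nabla u|^p\,d\mu$ for all $u\in C_0^\infty(M)$ is equivalent to $p$-hyperbolicity of $M$ (this is \eqref{Hardy}, from \cite{PT2}). Taking $\rho$ a suitable constant multiple of $|d\log h|^p$ — or, more robustly, running the argument through Proposition \ref{D_p_hyp} by showing that the derived inequality yields boundedness of $\Delta^{-1/2}:L^p\to L^p_{\loc}$ on a region where $d\log h\neq 0$ and then using the known $L^{p'}$-boundedness of the Riesz transform — contradicts the assumed $p$-parabolicity. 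Hence $M$ must be $p$-hyperbolic.

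\textbf{Main obstacle.} The delicate point is the passage from the mixed inequality $\|(d\log h)u\|_p\leq C\|(\Delta+\mathcal{V})^{1/2}u\|_p$ to a clean Hardy inequality $\int_M\rho|u|^p\leq\int_M|\nabla u|^p$ with $\rho\gtrsim|d\log h|^p$: this requires controlling $\|(\Delta+\mathcal{V})^{1/2}u\|_p$ by $\|\nabla u\|_p$ (plus harmless lower-order terms), which is exactly where one needs the $h$-transform to compare $\Delta+\mathcal V$ with $\Delta_{h^2\mu}$, the estimate $\nabla\log h\in L^\infty$, and the $L^{p'}$-boundedness of $d\Delta^{-1/2}$ together with duality to get $\|\Delta^{1/2}v\|_p\leq C\|\nabla v\|_p$. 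Keeping track that $h\sim 1$ makes all the weighted $L^p$ spaces comparable to the unweighted ones, so these substitutions are legitimate, is the bookkeeping that must be done carefully; once that is in place, the conclusion follows immediately from \eqref{Hardy}.
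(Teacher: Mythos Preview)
Your overall strategy matches the paper's: subtract Theorem \ref{main1} from the assumed boundedness to obtain $\|(d\log h)u\|_p\leq C\|(\Delta+\mathcal V)^{1/2}u\|_p$, then bound the right-hand side by a gradient norm and conclude via the Hardy characterization \eqref{Hardy} with $\rho=|d\log h|^p$ (nonzero since $\mathcal V\not\equiv 0$ forces $h$ non-constant).

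The gap is in your ``translation back to the Laplacian''. You try to compare $\|(\Delta+\mathcal V)^{1/2}u\|_p$ with $\|\Delta^{1/2}(h^{-1}u)\|_p$ modulo lower-order terms, and to absorb those terms you invoke $\nabla\log h\in L^\infty$. This claim is neither justified nor, in general, true: under assumption (ii) of Theorem \ref{main1} one only has $dh\in L^r_V$ for $r$ close to $p_0$ (Lemma \ref{h_prop}), not $dh\in L^\infty$; and even under assumption (i), the bound $dh\in L^\infty$ relies on the Cheng--Yau gradient estimate and the Ricci lower bound, not just elliptic regularity. So your argument does not close in case (ii).

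The fix --- which is exactly what the paper does --- is to avoid the unweighted Laplacian altogether and work with $\Delta_{h^2}$. The $h$-transform gives the \emph{exact} identity $h^{-1}(\Delta+\mathcal V)^{1/2}h=\Delta_{h^2}^{1/2}$, so with $v=h^{-1}u$ and $h\sim 1$ one has $\|(\Delta+\mathcal V)^{1/2}u\|_p\sim\|\Delta_{h^2}^{1/2}v\|_p$ with no error terms. Since $h\sim 1$, $(M,h^2\mu)$ satisfies \eqref{D} and \eqref{RFK}, hence by \cite{CDu} the Riesz transform $d\Delta_{h^2}^{-1/2}$ is bounded on $L^{p'}$ for $p'\in(1,2]$ (this is Lemma \ref{1<p<2}); the duality argument of \cite[Proposition 2.1]{CDu2} then yields $\|\Delta_{h^2}^{1/2}v\|_p\leq C\|dv\|_p$. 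Chaining gives $\|(d\log h)v\|_p\leq C\|dv\|_p$ directly, and \eqref{Hardy} finishes. No information on $|\nabla h|$ is needed.
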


\begin{Rem}\label{CMO}
{\em

In \cite[Theorem 6.1]{CMO}, a related result to Corollary \ref{main2} is proved: more precisely, the authors prove that if $\mathcal{V}\geq0$ and there exists a positive, bounded function $h$ such that $(\Delta+\mathcal{V})h=0$, then a necessary condition for $d(\Delta+\mathcal{V})^{-1/2}$ to be bounded on $L^p$ is that $p\leq \nu$. Notice that unlike our result, the existence of $h>0$ bounded, solution of $(\Delta+\mathcal{V})h=0$ is {\em assumed}, and moreover the necessary condition is $p>\nu$, instead of $M$ being $p$-hyperbolic. Thus, even in the case of a smooth, compactly supported non-negative potential $\mathcal{V}$ in $\R^n$, it is not possible from their result to conclude that $d(\Delta+\mathcal{V})^{-1/2}$ cannot be bounded on $L^n$. Notice also that in some cases, our Corollary \ref{main2} yields a much sharper condition than $p>\nu$: indeed, there are examples for which $\kappa$, the parabolic dimension, is strictly less than $\nu$. For instance, consider the Taub-NUT metric $g$ on $\R^4$ (see \cite{Leb}): it is a complete metric with zero Ricci curvature such that

$$V(x,r)\sim r^3,\qquad,\forall x\in \R^4,\,\forall r\geq1.$$
In fact, at infinity the metric is asymptotic to the product metric on $\R^3\times S^1$. Thus, $(\R^4,g)$ has parabolic dimension $\kappa$ equals to $3$, and by Bakry's celebrated result \cite{Bak}, the Riesz transform on $(\R^4,g)$ is bounded on $L^p$ for all $p\in (1,\infty)$. Thus, our Corollary \ref{main2} applies, and gives the necessary condition $p<3$, for the boundedness of $d(\Delta+\mathcal{V})^{-1/2}$. On the other hand, looking at balls of small radius in \eqref{exp_D}, one has $4\leq \nu$, and in fact $4=\nu$ by Bishop-Gromov. Thus, the necessary condition provided by \cite[Theorem 6.1]{CMO} is only $p\leq 4$. 

}

\end{Rem}
Theorem \ref{main1} has the following consequence for manifolds satisfying the Sobolev inequality and having Euclidean volume growth:

\begin{Cor}\label{main3}

Let $M$ be a manifold satisfying the Sobolev inequality \eqref{S} for some $n>2$, and having Euclidean volume growth:

$$V(x,r)\sim r^n,$$
for all $x\in M$ and $r>0$. Let $\mathcal{V}\in L^{\frac{n}{2}\pm\varepsilon}$, $\varepsilon>0$ be subcritical. Assume that the Riesz transform $d\Delta^{-1/2}$ is bounded on $L^p$, $p\in (1,n+\varepsilon)$, $\varepsilon>0$. Then $d(\Delta+\mathcal{V})^{-1/2}$ is bounded on $L^p$ if and only if $p\in (1,n)$.

\end{Cor}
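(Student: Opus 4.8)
The plan is to prove the two directions separately, reducing each to results established above, after first recording the consequences of the hypotheses. From $V(x,r)\sim r^n$ one gets \eqref{D} with exponents $\nu=\nu'=n$ in \eqref{exp_D} and $V(x,1)\sim 1$, so that $L^p_V(M)=L^p(M)$ with equivalent norms; and by Theorem \ref{car_hyp} together with the necessary condition \eqref{vol_p} --- which here reads $\int_1^\infty t^{-n/p}\,dt<\infty$ --- the manifold $M$ is $p$-hyperbolic precisely for $p\in(1,n)$, in particular for $p=2$ as $n>2$. Completeness and $(S^n)$ give, by the equivalence with ultracontractivity (\cite{CSCV}), $\|e^{-t\Delta}\|_{1,\infty}\lesssim t^{-n/2}$ for all $t>0$, hence \eqref{DUE} and, via \eqref{D}, also \eqref{UE}. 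Since $\mathcal{V}\in L^{\frac n2\pm\varepsilon}$ (and, shrinking $\varepsilon$, we may take it as small as convenient, using $L^{\frac n2-\varepsilon}\cap L^{\frac n2+\varepsilon}\subseteq L^{\frac n2-\varepsilon'}\cap L^{\frac n2+\varepsilon'}$ for $0<\varepsilon'<\varepsilon$), Example \ref{Sob_Kato} gives $\mathcal{V}\in K^\infty(M)$, so $\mathcal{V}_-$ satisfies $(K^\infty,1)$ and $\mathcal{V}_+$ is $(H,\mathbf{1})$-bounded; as $\mathcal{V}$ is subcritical, Theorem \ref{pos_sol}, applied with $P=\Delta$ and the positive solution $\mathbf{1}$, produces $h\sim 1$ with $(\Delta+\mathcal{V})h=0$ and $h=1-\Delta^{-1}(\mathcal{V}h)$. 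Finally $\mathcal{V}_-\in L^{n/2}$, so under $(S^n)$ the subcriticality of $\mathcal{V}$ is equivalent to the strong subcriticality of $\mathcal{V}_-$; hence $\Delta+\mathcal{V}\geq c_0(\Delta+\mathcal{V}_+)\geq c_0\Delta$ for some $c_0>0$, which gives on the one hand $\|du\|_2\lesssim\|(\Delta+\mathcal{V})^{1/2}u\|_2$ (so $d(\Delta+\mathcal{V})^{-1/2}$ is bounded on $L^2$), and on the other hand $(S^n)$ for $\Delta+\mathcal{V}$, whence $(\Delta+\mathcal{V})^{-1/2}:L^p\to L^q$, $\tfrac1q=\tfrac1p-\tfrac1n$, for $1<p<n$. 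I will assume $\mathcal{V}\not\equiv 0$.

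For necessity, I would pick $p_0=n+\varepsilon_0$ with $\varepsilon_0>0$ small, and below the given threshold, so that $d\Delta^{-1/2}$ is bounded on $L^p$ for $p\in(1,p_0)$. With $\nu=\nu'=n$, a short computation shows the exponents $q_1=\tfrac{n p_0}{p_0+n}-\varepsilon_1$ and $q_2=\max\!\big(\tfrac{p_0}{2},\tfrac{n p_0}{p_0+n}+\varepsilon_1\big)$ of Theorem \ref{main1}(ii) both lie in $\big(\tfrac n2,\tfrac n2+\varepsilon\big)$ once $\varepsilon_0,\varepsilon_1$ are small, so $\mathcal{V}\in L^{\frac n2-\varepsilon}\cap L^{\frac n2+\varepsilon}\subseteq L^{q_1}\cap L^{q_2}=L^{q_1}_V\cap L^{q_2}_V$; thus the hypotheses of Theorem \ref{main1}(ii) hold. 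Then Corollary \ref{main2} applies and shows that, for $p\in(2,p_0)$, $d(\Delta+\mathcal{V})^{-1/2}$ can be bounded on $L^p$ only if $M$ is $p$-hyperbolic, i.e. only if $p<n$. Hence $d(\Delta+\mathcal{V})^{-1/2}$ is unbounded on $L^p$ for $p\in[n,p_0)$; and if it were bounded on some $L^{p_1}$ with $p_1\geq p_0>n$, interpolation with its $L^2$-boundedness would force boundedness on $L^n$, a contradiction. So $d(\Delta+\mathcal{V})^{-1/2}$ is unbounded on $L^p$ for every $p\geq n$.

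For sufficiency I split into $p\in[2,n)$ and $p\in(1,2]$. On $[2,n)$ I invoke Theorem \ref{main1} itself, whose hypotheses were just verified: it gives that $d(\Delta+\mathcal{V})^{-1/2}-(d\log h)(\Delta+\mathcal{V})^{-1/2}$ is bounded on $L^p$ for $p\in[2,p_0)\supseteq[2,n)$, so it remains to bound the correction $(d\log h)(\Delta+\mathcal{V})^{-1/2}$ on $L^p$ for $p\in(1,n)$. By Hölder and the mapping property above, $\|(d\log h)(\Delta+\mathcal{V})^{-1/2}u\|_p\leq\|d\log h\|_n\,\|(\Delta+\mathcal{V})^{-1/2}u\|_q\lesssim\|d\log h\|_n\,\|u\|_p$ with $\tfrac1q=\tfrac1p-\tfrac1n$, so everything reduces to $d\log h\in L^n$. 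Since $h\sim 1$ is bounded above and below, $|d\log h|\lesssim|dh|$, and differentiating $h=1-\Delta^{-1}(\mathcal{V}h)$ gives $dh=-(d\Delta^{-1/2})\big(\Delta^{-1/2}(\mathcal{V}h)\big)$; as $\mathcal{V}h\in L^{\frac n2-\varepsilon}\cap L^{\frac n2+\varepsilon}$ (using $h$ bounded), \eqref{mapping} places $\Delta^{-1/2}(\mathcal{V}h)$ in $L^{s_1}\cap L^{s_2}$ with $s_1<n<s_2$ for $\varepsilon$ small, and then boundedness of $d\Delta^{-1/2}$ on $L^{s_1}$ and $L^{s_2}$ yields $dh\in L^{s_1}\cap L^{s_2}\subseteq L^n$. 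On $(1,2]$ I would use the $h$-transform: $\Delta+\mathcal{V}$ is unitarily equivalent to the weighted Laplacian $\Delta_{h^2\mu}$, which satisfies \eqref{D} (as $h\sim 1$) and has Gaussian upper bounds (Theorem \ref{heat_kernel}), so by \cite{CDu} the Riesz transform $d\Delta_{h^2\mu}^{-1/2}$ is bounded on $L^p(h^2\mu)=L^p(\mu)$ for $p\in(1,2]$; writing $d\big[(\Delta+\mathcal{V})^{-1/2}u\big]=h\,d\big(\Delta_{h^2\mu}^{-1/2}(u/h)\big)+(d\log h)(\Delta+\mathcal{V})^{-1/2}u$ and using $h\sim 1$, the first term is controlled by $\|d\Delta_{h^2\mu}^{-1/2}\|_{p\to p}$ and the second exactly as before. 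Together these give boundedness of $d(\Delta+\mathcal{V})^{-1/2}$ on $L^p$ for all $p\in(1,n)$.

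I expect the main obstacles to be, first, the exponent bookkeeping in the necessity step --- checking that $\mathcal{V}\in L^{\frac n2\pm\varepsilon}$ indeed satisfies the integrability demanded by Theorem \ref{main1}(ii) once $p_0$ is pushed just above $n$ --- and, above all, the estimate $d\log h\in L^n$, which is the one genuinely new ingredient and is exactly what upgrades Theorem \ref{main1}'s conclusion ``$d(\Delta+\mathcal{V})^{-1/2}$ modulo the correction $(d\log h)(\Delta+\mathcal{V})^{-1/2}$ is bounded on $L^p$'' to honest $L^p$-boundedness of $d(\Delta+\mathcal{V})^{-1/2}$ on the whole interval $(1,n)$. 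Everything else is an assembly of results already proved above.
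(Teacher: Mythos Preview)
Your proof is correct and follows essentially the same route as the paper: reduce the boundedness question for $p\in[2,p_0)$ via Theorem \ref{main1} to controlling the correction term $(d\log h)(\Delta+\mathcal{V})^{-1/2}$, handle that term via H\"older and the Sobolev mapping property $(\Delta+\mathcal{V})^{-1/2}:L^p\to L^q$, and conclude with $dh\in L^n$. Your argument is in fact more complete than the paper's, which defers $dh\in L^n$ to Lemma \ref{h_prop}(4) without giving the proof, and which leaves the necessity direction and the range $p\in(1,2]$ implicit; you supply all of these explicitly, and your proof of $dh\in L^n$ via $dh=-(d\Delta^{-1/2})\bigl(\Delta^{-1/2}(\mathcal{V}h)\bigr)$ combined with \eqref{mapping} and the assumed $L^{s_1},L^{s_2}$ boundedness of $d\Delta^{-1/2}$ for $s_1<n<s_2$ is exactly the kind of direct argument one expects here. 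One small remark: to get the full mapping property $(\Delta+\mathcal{V})^{-1/2}:L^p\to L^q$ for all $1<p<n$ (not just $p=2$), the Sobolev inequality for $\Delta+\mathcal{V}$ alone is not quite enough---the paper invokes in addition the uniform $L^1$-boundedness of $e^{-t(\Delta+\mathcal{V})}$, which follows from the Gaussian upper bound for its heat kernel (Theorem \ref{heat_kernel}); you have all the ingredients for this, but you might want to make that step explicit.
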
 

\begin{Rem}
{\em
From Corollary \ref{main3}, one recovers the first half of a result by C. Guillarmou and A. Hassell \cite[Theorem 1.5]{GH}, which states that if $M$ is an asymptotically conic manifold, and $\mathcal{V}=O(r^{-3})$, then $d(\Delta+\mathcal{V})^{-1/2}$ is bounded on $L^p$ if and only if $p\in \left(1,n\right)$. In particular, we obtain an elementary (i.e. without using the $b-$calculus) proof of Guillarmou and Hassell's result. 
}
\end{Rem}
In \cite[Theorem 3.9]{AO}, a boundedness result for the Riesz transform is proved. Actually, looking closely at the proof and using our Proposition \ref{cond_Kato}, together with Theorems \ref{heat_kernel} and \ref{car_hyp}, one can improve it and show:

\begin{Thm}\label{AO}

Let $M$ satisfying \eqref{D}, \eqref{UE} and \eqref{P_loc}. Let $\kappa$ the parabolic dimension of $M$, and assume that $\kappa>2$. Assume also that the Riesz transform on $M$ is bounded on $L^p$, for all $p\in (1,p_0)$. Let $\mathcal{V}\in L^{\frac{\nu'}{2}-\varepsilon}\left(M,\frac{d\mu(x)}{V(x,1)}\right)\cap L^{\frac{\nu}{2}+\varepsilon}\left(M,\frac{d\mu(x)}{V(x,1)}\right)$ be subcritical such that for all $p<\kappa$, close enough to $\kappa$,

$$\int_1^\infty \left|\left|\frac{|\mathcal{V}|^{1/2}}{V(\cdot,t)^{1/p}}\right|\right|_p\,dt<\infty.$$
Then, $d(\Delta+\mathcal{V})^{-1/2}$ is bounded on $L^p$, for every $p\in (1,\min(\kappa,p_0))$.

\end{Thm}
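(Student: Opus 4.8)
The plan is to follow the strategy of the proof of \cite[Theorem 3.9]{AO}, replacing the ingredients that require a global Sobolev inequality by the weighted machinery developed in Section 6. First I would reduce the problem to the weighted Laplacian via the $h$-transform: since $\mathcal{V}$ lies in $L_V^{\nu'/2-\varepsilon}\cap L_V^{\nu/2+\varepsilon}$, Proposition \ref{cond_Kato} gives $\mathcal{V}\in K^\infty(M)$ and $\Delta^{-1}|\mathcal{V}|\in L^q_V$ for some finite $q$; combined with the subcriticality of $\mathcal{V}$, Theorem \ref{pos_sol} produces a positive solution $h\sim 1$ of $(\Delta+\mathcal{V})h=0$, and Theorem \ref{heat_kernel} gives \eqref{UEV} for $\Delta+\mathcal{V}$. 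Under the $h$-transform, $\Delta+\mathcal{V}$ becomes the weighted Laplacian $H=\Delta_{h^2\mu}$; because $h\sim 1$, the heat kernel of $H$ again satisfies \eqref{D} and \eqref{UE} (with respect to $\mu$, up to constants), and boundedness of $d(\Delta+\mathcal{V})^{-1/2}$ on $L^p$ is equivalent to boundedness of $dH^{-1/2}$ on $L^p$ (modulo the first-order term $(d\log h)(\cdot)$, which is harmless since $d\log h\in L^\infty$ once one knows $h\sim 1$ and elliptic regularity bounds $\nabla h$). So it suffices to prove $dH^{-1/2}$ is bounded on $L^p$ for $p\in(1,\min(\kappa,p_0))$.

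Next I would split the range. For $p\in(1,2]$, boundedness of $dH^{-1/2}$ follows from Coulhon--Duong \cite{CDu}, since $H$ has doubling and Gaussian estimates. For $p\in[2,\min(\kappa,p_0))$, I would use the perturbation result Proposition \ref{C-D} to transfer boundedness of $\nabla\Delta^{-1/2}$ to boundedness of $\nabla H^{-1/2}$: with $A=h^2\,\mathrm{Id}$ one has $a=A-\mathrm{Id}=(h^2-1)\,\mathrm{Id}$, and the point is to verify $a\in L_V^q\cap L^\infty$. Boundedness of $a$ is clear from $h\sim 1$; membership in $L_V^q$ is where the integral hypothesis on $\mathcal{V}$ enters — from the integral representation $h-1=-(\Delta+\mathcal{V})^{-1}(\mathcal{V}h)$ in Theorem \ref{pos_sol}, together with $h\sim1$ and the mapping properties of $\Delta^{-1}$ on the weighted spaces (Corollary \ref{heat_weight}), one controls $\|h-1\|_{L^q_V}$ via $\|\mathcal{V}\|_{L_V^{\nu'/2-\varepsilon}\cap L_V^{\nu/2+\varepsilon}}$. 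One also needs $\nabla(I+H)^{-1/2}$ bounded on $L^p$ for $p\in(2,p_0)$, which follows since $(I+H)^{-1/2}$ has a better behaved kernel than $H^{-1/2}$ (exponential decay of $e^{-t(I+H)}$ for large $t$), reducing matters to the already established $p\le 2$ bound plus analyticity. Proposition \ref{C-D} then upgrades $\nabla H^{-1/2}$ from the range $(p_0',2)$ to $(2,p_0)$.

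Finally I would extract the exponent $\kappa$. The hypothesis that for all $p<\kappa$ close to $\kappa$ one has $\int_1^\infty \||\mathcal{V}|^{1/2}V(\cdot,t)^{-1/p}\|_p\,dt<\infty$ is designed so that the relevant weighted resolvent/Riesz estimates for $\Delta+\mathcal{V}$ close up precisely for $p<\kappa$; concretely, by Theorem \ref{car_hyp} (applied with $p_0$ there equal to $\kappa$, using \eqref{D}, \eqref{UE}, \eqref{P_loc}) the volume lower bound $V(x_0,t)\gtrsim t^p$ holds for all $p<\kappa$, which is exactly what is needed to run the endpoint argument — and the integral condition on $\mathcal{V}$ ensures the perturbation term generated by $\mathcal{V}$ does not destroy this. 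Combining with $p_0$ from the unperturbed Riesz transform hypothesis, one gets the range $(1,\min(\kappa,p_0))$. The main obstacle I expect is the verification, in the $p\in[2,\min(\kappa,p_0))$ regime, that the perturbative term $a\nabla(I+tH)^{-1}$ has the decay $\|a\nabla(I+tH)^{-1}\|_{p,p}\lesssim t^{-1/2-\delta}$ required to make Coulhon--Dungey's iteration converge; this is precisely the estimate whose proof I sketched above using Hölder in the weighted spaces and Proposition \ref{gvEv}, and getting the exponents to line up (so that $\delta>0$ while staying below $\kappa$) is the delicate bookkeeping at the heart of the argument.
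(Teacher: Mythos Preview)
Your plan conflates the strategy for Theorem~\ref{main1} with what is needed for Theorem~\ref{AO}, and the gap is precisely at the step you call ``harmless''. After the $h$-transform you correctly obtain
\[
d(\Delta+\mathcal{V})^{-1/2} \;=\; h\cdot\bigl(h^{-1}d\Delta_{h^2}^{-1/2}h^{-1}\bigr) \;+\; (d\log h)\,(\Delta+\mathcal{V})^{-1/2},
\]
but the second term is \emph{not} harmless: even if $d\log h\in L^\infty$ (which, incidentally, is not proved in the paper without a Ricci lower bound, cf.\ Lemma~\ref{h_prop}), you would still need $(\Delta+\mathcal{V})^{-1/2}$ bounded on $L^p$, and this is simply false in general. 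Indeed, your Coulhon--Dungey argument for $d\Delta_{h^2}^{-1/2}$ works for \emph{all} $p\in(2,p_0)$ with no reference to $\kappa$; if the $(d\log h)$ term were really harmless, you would conclude boundedness of $d(\Delta+\mathcal{V})^{-1/2}$ on the full range $(1,p_0)$, contradicting Corollary~\ref{main2}. The term $(d\log h)(\Delta+\mathcal{V})^{-1/2}$ is exactly the obstruction to boundedness for $p\geq\kappa$, not a remainder to be discarded. This is why the paper packages the $h$-transform route as Theorem~\ref{main1}, whose conclusion is only that the \emph{difference} $d(\Delta+\mathcal{V})^{-1/2}-(d\log h)(\Delta+\mathcal{V})^{-1/2}$ is bounded.

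The paper's route to Theorem~\ref{AO} is different and does not go through the $h$-transform at all: it follows \cite[Theorem~3.9]{AO} directly, writing
\[
d(\Delta+\mathcal{V})^{-1/2}=\bigl(d\Delta^{-1/2}\bigr)\bigl(\Delta^{1/2}(\Delta+\mathcal{V})^{-1/2}\bigr),
\]
and proving that $\Delta^{1/2}(\Delta+\mathcal{V})^{-1/2}$ is bounded on $L^p$ for $p<\kappa$. This is where the integral hypothesis $\int_1^\infty\||\mathcal{V}|^{1/2}V(\cdot,t)^{-1/p}\|_p\,dt<\infty$ is actually used (it controls the difference $\Delta^{1/2}(\Delta+\mathcal{V})^{-1/2}-I$ via Duhamel), and the new ingredients supplied by this paper are Proposition~\ref{cond_Kato} (to place $\mathcal{V}$ in $K^\infty(M)$ without a Sobolev inequality) and Theorem~\ref{heat_kernel} (to get Gaussian bounds for $e^{-t(\Delta+\mathcal{V})}$). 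Your attempt to ``extract $\kappa$'' from the Coulhon--Dungey iteration cannot work: that iteration produces the exponent $p_0$, and the restriction to $p<\kappa$ must come from the integral condition on $\mathcal{V}$ through the Assaad--Ouhabaz mechanism.
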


\begin{Rem}
{\em 

\begin{enumerate}

\item Actually, as follows from Theorem \ref{car_hyp}, $p\leq \kappa$ is {\em necessary} for having

$$\int_1^\infty \left|\left|\frac{|\mathcal{V}|^{1/2}}{V(\cdot,t)^{1/p}}\right|\right|_p\,dt<\infty.$$

\item Assume that $M$ satisfies \eqref{P_loc}. If $\mathcal{V}\in L^\infty$ is subcritical with compact support, by Theorem \ref{car_hyp} one has, for all $p\in (1,\kappa)$,

$$\int_1^\infty \left|\left|\frac{|\mathcal{V}|^{1/2}}{V(\cdot,t)^{1/p}}\right|\right|_p\,dt<\infty.$$
Therefore, $d(\Delta+\mathcal{V})^{-1/2}$ is bounded on $L^p$ for all $p\in(1,\min(\kappa,p_0))$. This shows that Corollary \ref{main2} is {\em optimal}.
\end{enumerate}
}
\end{Rem}
One can in fact complement Theorem \ref{AO}, and extend Corollary \ref{main3} to more general manifolds; in the next theorem, which is one of the main results of this article, we prove a sharp boundedness result for the Riesz transform with potential:

\begin{Thm}\label{main4}

Let $M$ satisfying \eqref{D}, \eqref{UE} and \eqref{P_loc}. Recall the exponents $\nu,\nu'$ from \eqref{exp_D}. Let $\kappa$ be the parabolic dimension of $M$, assume that $\kappa>2$ and that $M$ is $\kappa$-parabolic. Assume also that for every $p<\kappa$, close enough to $\kappa$, $M$ is $p$-regular. Assume that the Riesz transform on $M$ is bounded on $L^{p_0}$ for some $p_0>\nu$. Let $\mathcal{V}\in L^{\frac{\nu'}{2}-\varepsilon}\left(M,\frac{d\mu(x)}{V(x,1)}\right)\cap L^{\frac{\nu}{2}+\varepsilon}\left(M,\frac{d\mu(x)}{V(x,1)}\right) $ be subcritical. Then $d(\Delta+\mathcal{V})^{-1/2}$ is bounded on $L^p$ if and only if $p\in (1,\kappa)$. 

\end{Thm}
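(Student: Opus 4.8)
The plan is to combine the unboundedness direction (which follows from Corollary \ref{main2}) with the boundedness direction (which follows from Theorem \ref{AO}), after checking that the hypotheses of these results are met. First, observe that $\kappa > 2$ and $M$ being $p$-regular for $p < \kappa$ close to $\kappa$ implies in particular that $M$ is $p$-hyperbolic for all $p \in (1,\kappa)$; combined with Theorem \ref{car_hyp} this gives the volume lower bound $V(x_0,t) \gtrsim t^p$ for $t \geq 1$ and all such $p$, and in particular $M$ is $2$-hyperbolic (non-parabolic). Since $d\Delta^{-1/2}$ is bounded on $L^{p_0}$ with $p_0 > \nu > 2$, and on $L^s$ for $s \in (1,2]$ by \cite{CDu} (as $M$ satisfies \eqref{D} and \eqref{UE}), interpolation gives boundedness on $L^p$ for all $p \in (1,p_0)$, so the standing hypotheses of Theorem \ref{main1} are in force. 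By Theorem \ref{heat_kernel}, since $\mathcal{V}$ lies in the weighted spaces $L_V^{\nu'/2-\varepsilon}\cap L_V^{\nu/2+\varepsilon}$, Proposition \ref{cond_Kato} shows $\mathcal{V}\in K^\infty(M)$ (hence $\mathcal{V}_-$ satisfies $(K^\infty,1)$ and $\mathcal{V}_+$ is $(H,\mathbf 1)$-bounded), so $\Delta+\mathcal{V}$ has a Gaussian heat kernel upper bound, and Theorem \ref{pos_sol} furnishes $h\sim 1$ with $(\Delta+\mathcal{V})h=0$.

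For the \emph{necessity} of $p<\kappa$: if $d(\Delta+\mathcal{V})^{-1/2}$ were bounded on $L^p$ for some $p\in[\kappa,p_0)$ with $\mathcal{V}\not\equiv 0$, then by Corollary \ref{main2}, $M$ would be $p$-hyperbolic. But $M$ is $\kappa$-parabolic by hypothesis, and by the observation from \cite{CHSC} (quoted before Lemma \ref{para_exp}), under \eqref{P_loc}, $\kappa$-parabolicity implies $q$-parabolicity for every $q>\kappa$; hence $M$ is $p$-parabolic for every $p\geq\kappa$, a contradiction. (If $\mathcal{V}\equiv 0$ the statement reduces to the known characterization for $d\Delta^{-1/2}$.) For $p\in(1,2]$ boundedness is automatic from \cite{CDu}, so only $p\in(2,\kappa)$ remains.

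For the \emph{sufficiency} of $p<\kappa$: the goal is to apply Theorem \ref{AO}, whose only nontrivial remaining hypothesis is the integral condition
$$\int_1^\infty \left\|\frac{|\mathcal{V}|^{1/2}}{V(\cdot,t)^{1/p}}\right\|_p\,dt<\infty$$
for $p<\kappa$ close to $\kappa$. Here the $p$-regularity assumption is used via Theorem \ref{capacity}: $p$-regularity of $M$ for $p$ near $\kappa$ is equivalent to the relative volume lower bound $V(x,t)/V(x,r)\gtrsim (t/r)^p$ for $t\geq r$, which in turn (by the argument of Theorem \ref{capacity}, condition (iii)) yields $\int_r^\infty (V(x,r)/V(x,t))^{1/p}\,dt\lesssim r$ uniformly in $x$. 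The plan is to estimate, using H\"older with the weight $V(\cdot,1)$ and the membership $\mathcal{V}\in L_V^{\nu'/2-\varepsilon}\cap L_V^{\nu/2+\varepsilon}$ (so $|\mathcal{V}|^{1/2}\in L_V^{2\sigma}$ for a suitable $\sigma$), that for each fixed $t$,
$$\left\|\frac{|\mathcal{V}|^{1/2}}{V(\cdot,t)^{1/p}}\right\|_p \lesssim \varphi(t)\,\big\||\mathcal{V}|^{1/2}\big\|_{L_V^{2\sigma}},$$
where $\varphi(t)$ decays like a negative power of $t$ governed by $\nu'$ (for $t\geq 1$) coming from \eqref{exp_D}, and then to check that this power is $<-1$ when $p$ is close enough to $\kappa\geq\nu'$ (using Lemma \ref{para_exp}, $\nu'\leq\kappa$) and $\varepsilon$ is as given. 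Since $p_0>\nu\geq\kappa\geq p$ for the relevant $p$, we have $\min(\kappa,p_0)=\kappa$, so Theorem \ref{AO} gives boundedness of $d(\Delta+\mathcal{V})^{-1/2}$ on $L^p$ for all $p\in(1,\kappa)$, completing the proof.

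The main obstacle I anticipate is the bookkeeping in this last step: one must carefully choose the auxiliary exponents (the H\"older exponent $\sigma$, how close $p$ must be to $\kappa$, how the two integrability exponents $\nu'/2-\varepsilon$ and $\nu/2+\varepsilon$ enter for small versus large $t$) so that the $t$-integral genuinely converges, and this requires using $p$-regularity in the sharp form provided by Theorem \ref{capacity} rather than the mere volume growth bound from Theorem \ref{car_hyp}. The interplay between the \emph{relative} volume lower bound and the \emph{absolute} growth, together with the doubling exponents $\nu,\nu'$, is where all the sharpness of the statement is concentrated.
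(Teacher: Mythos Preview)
Your proposal is correct and follows essentially the same route as the paper: unboundedness for $p\geq\kappa$ via Corollary \ref{main2}, boundedness for $p<\kappa$ via Theorem \ref{AO}, with the integral condition checked using the reverse volume estimate coming from $p$-regularity through Theorem \ref{capacity}. The bookkeeping you flag as the main obstacle is in fact simpler than you anticipate: since $\nu'\leq\kappa\leq\nu$ (Lemma \ref{para_exp}), interpolation gives $\mathcal{V}\in L^{p/2}_V$ for $p$ close to $\kappa$, and the reverse volume bound $V(x,t)/V(x,1)\gtrsim t^{p'}$ (taken with any $p'\in(p,\kappa)$) yields directly
\[
\left\|\frac{|\mathcal{V}|^{1/2}}{V(\cdot,t)^{1/p}}\right\|_p^p=\int_M\frac{|\mathcal{V}|^{p/2}}{V(\cdot,t)}\,d\mu\lesssim t^{-p'}\|\mathcal{V}\|_{L^{p/2}_V}^{p/2},
\]
so the $t$-integrand is $\lesssim t^{-p'/p}$ with $p'/p>1$; no auxiliary H\"older exponent $\sigma$ is needed.
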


\begin{Rem}
{\em

The fact that $p<\kappa$ is sufficient for the boundedness on $L^p$ of $d(\Delta+\mathcal{V})^{-1/2}$ follows from Theorem \ref{AO}, together with our Theorems \ref{heat_kernel} and Theorems \ref{car_hyp}. Therefore, in a sense it is a consequence of the results of \cite{AO}. The converse, however, is entirely new.
}

\end{Rem}

%Specializing even more the results of Theorem \ref{main1}, one obtains the following rather special  -- but useful -- result:

%\begin{Cor}\label{main3}

%Assume that $M$ is a manifold satisfying the Poincare inequalities and the Sobolev inequality of dimension $n>2$, having Ricci curvature bounded from below and such that the volume of balls of large radius is Euclidean of dimension $n$:

%\begin{equation}\label{VEuc}
%V(x,r)\simeq r^n,\qquad \forall r\geq1.
%\end{equation}
%Assume also that the Riesz transform on $M$ is bounded on $L^{n+\varepsilon}$. Let $V$ be a subcritical potential in $L^{\frac{n}{2}\pm\varepsilon}$. Then the Riesz transform with potential $d(\Delta+\mathcal{V})^{-1/2}$ is bounded on $L^p$ iff $p\in (1,n)$.

%\end{Cor}
%The boundedness of $d(\Delta+\mathcal{V})^{-1/2}$ on $L^p$ for $p\in (1,n)$ under the assumptions of \ref{main3} follows also from the results of \cite{AO}. Our proof has the advantage of being a straightforward consequence of Theorem \ref{main1}. Furthermore, we will see in Section 4 that one can use the results of \cite{AO} in order to obtain a similar caractarization of the range of boundedness of $d(\Delta+\mathcal{V})^{-1/2}$ for some potentials, on manifolds having uniform volume growth. 
The rest of this subsection is devoted to the proof of Theorem \ref{main1}, Corollaries \ref{main2} and \ref{main3}, and Theorem \ref{main4}. One of the main technical ingredients in the proof of Theorem \ref{main1} is a perturbation result by Coulhon and Dungey (Theorem 4.1 in \cite{CD}).  

\begin{proof}

Denote $P=\Delta+\mathcal{V}$. Let $D$ be the first-order differential operator:

$$Du=d(h^{-1}u),$$
where $h\sim 1$ is the positive solution of $Pu=0$ given by Theorem \ref{pos_sol}. Consider the operator $T_h$ which is multiplication by $h$, and the $h$-transform $P_h=T_h^{-1}PT_h$. By \eqref{Ph}, $P_h$ is the weighted Laplacian $\Delta_{h^2}$, self-adjoint on $L^2(M,h^2\mu)\simeq L^2(M,d\mu)$. It is clear by spectral theory that

$$P_h^{-1/2}=T_h^{-1}P^{-1/2}T_h.$$
Notice also that

$$T_h^{-1}DT_h=h^{-1}d.$$
Consequently,

$$T_h^{-1}DP^{-1/2}T_h=h^{-1}d\Delta_{h^2}^{-1/2}.$$
Notice that $T_h$ is an isometry from $L^p(M,d\mu)$ to $L^p(M,h^pd\mu)$. Given that $h\sim 1$, there is a natural identification $L^p(M,h^p dx)\simeq L^p(M,dx)$, and so $DP^{-1/2}$ is bounded on $L^p(M,dx)$ if and only if $d\Delta_{h^2}^{-1/2}$ is bounded on $L^p(M,dx)$. We claim that 

\begin{Lem}\label{dL}

The operator $d\Delta_{h^2}^{-1/2}$ is bounded on $L^p$, for every $p\in [2,p_0)$.

\end{Lem}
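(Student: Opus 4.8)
\noindent The plan is to establish that $d\Delta_{h^2}^{-1/2}$ is bounded on $L^p$ for $p\in[2,p_0)$ in three stages: the endpoint $p=2$ by the spectral theorem, the range $p\in(1,2]$ (needed as an input) by the Coulhon--Duong theorem, and the decisive range $p\in(2,p_0)$ by a Coulhon--Dungey perturbation argument. Throughout, recall that $h\sim1$ solves $(\Delta+\mathcal{V})h=0$ and that, under the hypotheses of Theorem \ref{main1}, $\mathcal{V}_-$ satisfies $(K^\infty,1)$ while $\mathcal{V}_+$ is $(H,\mathbf{1})$-bounded, so that (as $M$ is $2$-hyperbolic and $\mathcal{V}$ subcritical) Theorem \ref{heat_kernel} gives \eqref{UEV}; since $h\sim1$, the heat kernel $p^h_t(x,y)=p^{\mathcal{V}}_t(x,y)/\bigl(h(x)h(y)\bigr)$ of $\Delta_{h^2}$ then satisfies \eqref{UE} for the doubling measure $h^2\mu\sim\mu$. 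For $p=2$: since $||dv||_{L^2(h^2\mu)}^2=\langle\Delta_{h^2}v,v\rangle$, we get $||d\Delta_{h^2}^{-1/2}u||_{L^2(h^2\mu)}=||u||_{L^2(h^2\mu)}$, and $L^2(h^2\mu)=L^2(\mu)$ with equivalent norms. For $p\in(1,2]$: by \cite{CDu} applied to $\Delta_{h^2}$, the operator $d\Delta_{h^2}^{-1/2}$ is bounded on $L^p$; in particular it is bounded on $L^{p'}$ for all $p'\in(p_0',2)$.

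For $p\in(2,p_0)$, I would invoke the perturbation theorem of Coulhon and Dungey (\cite[Theorem 4.1]{CD}; compare Proposition \ref{C-D}). The idea is that $\Delta_{h^2}$---equivalently, through the $h$-transform, the Schr\"{o}dinger operator $\Delta+\mathcal{V}$---differs from $\Delta$ by a perturbation that is ``small at infinity'' in the sense measured by the assumed membership of $\mathcal{V}$ in weighted Lebesgue spaces. Quantitatively, up to the comparison $h^2\mu\sim\mu$, the operator $\Delta_{h^2}$ is the divergence-form operator $-\div\bigl((\mathrm{Id}+a)\nabla\,\cdot\,\bigr)$ with $a=(h^2-1)\,\mathrm{Id}$, and the estimate to establish is $a\in L^q_V\cap L^\infty$ for a suitable finite $q$. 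That $a\in L^\infty$ is clear since $h\sim1$; for the weighted bound one uses the integral identity $h=\mathbf{1}-\Delta^{-1}(\mathcal{V}h)$ of Theorem \ref{pos_sol} and the fact that $\mathcal{V}h$ lies in the same weighted spaces as $\mathcal{V}$: the weighted mapping properties of $\Delta^{-1}$ from Section 6 (Proposition \ref{cond_Kato} and its proof) then give a finite $q$ with $\Delta^{-1}|\mathcal{V}h|\in L^q_V$, whence $|h-1|\le\Delta^{-1}|\mathcal{V}h|\in L^q_V$, and, writing $a=(h-1)(h+1)\,\mathrm{Id}$ with $h+1\in L^\infty$, also $a\in L^q_V$. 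The other hypotheses of the perturbation theorem are then checked directly: $\nabla\Delta^{-1/2}$ is bounded on $L^p$ for $p\in(2,p_0)$ by assumption in Theorem \ref{main1}; $d\Delta_{h^2}^{-1/2}$ is bounded on $L^{p'}$ for $p'\in(p_0',2)$ by the first paragraph; and $\nabla(I+\Delta_{h^2})^{-1/2}$ is bounded on $L^p$ for $p\in(2,p_0)$ because the coefficient $h^2$ is uniformly comparable to $1$ and locally regular, making this a purely local matter. Granting these, the theorem yields $d\Delta_{h^2}^{-1/2}$ bounded on $L^p$ for every $p\in(2,p_0)$, and with $p=2$ this proves the lemma.

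The genuinely substantive step is the membership $a=h^2-1\in L^q_V$, which rests on the weighted estimates for $\Delta^{-1}$ from Section 6 applied to the equation satisfied by $h$, and it is here that the precise integrability hypothesis on $\mathcal{V}$ is used; in the regime $p_0>\nu$ of Theorem \ref{main1} one must in addition arrange, by means of the tuned exponents $q_1,q_2$ and the weighted heat-kernel estimates of Sections 5 and 6, that the resulting $q$ is large enough to reach the full range $(2,p_0)$ in the perturbation theorem. The remaining difficulty is bookkeeping: the Coulhon--Dungey statement is formulated for a divergence-form perturbation of the Laplacian of a fixed measured manifold, and one must verify that it still applies to the weighted Laplacian $\Delta_{h^2}$ via the comparison $h^2\mu\sim\mu$ (or, equivalently, recast it as a statement about the potential perturbation $\mathcal{V}$ of $\Delta+\mathcal{V}$).
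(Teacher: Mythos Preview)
Your overall architecture is exactly the paper's: $p=2$ by the Green formula, $p\in(1,2]$ by Coulhon--Duong (this is Lemma~\ref{1<p<2}), $p\in(2,p_0)$ by Proposition~\ref{C-D} with $a=(h^2-1)\mathrm{Id}$, and your verification of $a\in L^q_V\cap L^\infty$ via the integral equation $h=\mathbf{1}-\Delta^{-1}(\mathcal{V}h)$ and Proposition~\ref{cond_Kato} is precisely Lemma~\ref{h_prop}(1). One minor remark: once $a\in L^q_V\cap L^\infty$ for \emph{some} finite $q$, interpolation gives $a\in L^r_V$ for all $r\ge q$, so there is no need to ``arrange that $q$ is large enough'' in case~(ii); the proof of Proposition~\ref{C-D} already handles this.

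The genuine gap is your treatment of the local Riesz transform $\nabla(I+\Delta_{h^2})^{-1/2}$. Saying that $h^2\sim1$ and $h$ is ``locally regular'' makes this ``a purely local matter'' is not a proof: on a manifold satisfying only \eqref{D} and \eqref{UE}, boundedness of a local Riesz transform on $L^p$ for $p>2$ is \emph{not} automatic from uniform ellipticity of the coefficients, and there is no bounded-geometry hypothesis in case~(ii). In the paper this step is Lemma~\ref{local}, and it is where assumptions~(i) and~(ii) of Theorem~\ref{main1} are actually used in a serious way. The argument conjugates by $h$ to reduce to $D(\Delta+\mathcal{V}+1)^{-1/2}$, splits into $d(\Delta+\mathcal{V}+1)^{-1/2}$ and $(dh)(\Delta+\mathcal{V}+1)^{-1/2}$, handles the first piece via an Assaad--Ouhabaz type argument (condition~\eqref{cond_AO}), and handles the second via control of $dh$: in case~(i), $dh\in L^\infty$ by Cheng--Yau gradient estimates (this is why Ricci bounded below and $\mathcal{V}$ satisfying \eqref{L_p} for some $p>N$ are assumed); in case~(ii), $dh\in L^r_V$ for $r$ near $p_0$ via the weighted gradient estimates of Corollary~\ref{grad_weight} (this is why the exponents $q_1,q_2$ are tuned as they are). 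These are Lemma~\ref{h_prop}(2),(3), and they are not recoverable from ``$h\sim1$'' alone.
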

The proof of this claim relies on the above-mentioned perturbation result of Coulhon and Dungey \cite[Theorem 4.1]{CD}, and is postponed. Assuming the result of Lemma \ref{dL} for the moment, we obtain that $D(\Delta+\mathcal{V})^{-1/2}$ is bounded on $L^p$, $p\in (2,p_0)$. But

$$D(\Delta+\mathcal{V})^{-1/2}=d(h^{-1})(\Delta+\mathcal{V})^{-1/2}+h^{-1}d(\Delta+\mathcal{V})^{-1/2},$$
hence the operator 

$$d(\Delta+\mathcal{V})^{-1/2}-(d\log h)(\Delta+\mathcal{V})^{-1/2}$$
is bounded on $L^p$, $p\in (2,p_0)$. It remains to prove the inequality \eqref{imp_Riesz}. Let $v\in C_0^\infty(M)$. By the above, one has for every $u\in L^p$,

\begin{equation}\label{bound}
||d(\Delta+\mathcal{V})^{-1/2}u-(d\log h)(\Delta+\mathcal{V})^{-1/2}u||_p\leq C||u||_p.
\end{equation}
We want to apply inequality \eqref{bound} with the choice $u=(\Delta+\mathcal{V})^{1/2}v$. In order for this to be licit, one has to prove that $(\Delta+\mathcal{V})^{1/2}C_0^\infty\subset L^p$. Conjugating by $h$, one sees that this is equivalent to $\Delta_{h^2}^{1/2}C_0^\infty\subset L^p$. This has been proved in \cite[Lemma 2.2]{D2}. Hence, plugging in \eqref{bound} the function $u=(\Delta+\mathcal{V})^{1/2}v$, one finds \eqref{imp_Riesz}. This concludes the proof of Theorem \ref{main1}.

\end{proof}

\noindent \textit{Proof of Corollary \ref{main2}:} Let us prove the first part. Assume that $d(\Delta+\mathcal{V})^{-1/2}$ is bounded on $L^p$ for some $p\in(2,p_0)$. Let $D=d\circ h^{-1}$. We claim that for every $q\in [2,\infty)$,

\begin{equation}\label{rever}
||(\Delta+\mathcal{V})^{1/2}u||_q\leq C||Du||_q,\qquad\forall u\in C_0^\infty(M).
\end{equation}
To show this, let us start with the inequality

\begin{equation}\label{weight_rev}
||\Delta_{h^2}^{1/2}v||_q\leq C||dv||_q,\qquad\forall v\in C_0^\infty(M).
\end{equation}
Inequality \eqref{weight_rev} for every $q\in [2,\infty)$ follows from $h\sim 1$ and the fact that $d\Delta^{-1/2}_{h^2}$ is bounded on $L^q$, $q\in (1,2]$ (this will be proved in Lemma \ref{1<p<2}), together with a classical duality argument (see \cite[Proposition 2.1]{CDu2}). Now, by \eqref{Ph},

$$h^{-1}(\Delta+\mathcal{V})^{1/2}h=\Delta_{h^2}^{1/2},$$
and we use inequality \eqref{weight_rev} with $u=hv$, to obtain that for every $u\in C_0^\infty(M)$,

$$||h^{-1}(\Delta+\mathcal{V})^{1/2}u||_q\leq C||Du||_q.$$
Since $h\sim1$, we get \eqref{rever}. By Theorem \ref{main1}, the operator $(dh)(\Delta+\mathcal{V})^{-1/2}$ has to be bounded on $L^p$, therefore

$$||(dh)u||_p\leq C||(\Delta+\mathcal{V})^{1/2}u||_p,\qquad\forall u\in C_0^\infty(M),$$
which, together with \eqref{rever} for $q=p$, implies that

$$||(dh)u||_p\leq C||Du||_p,\qquad\forall u\in C_0^\infty(M).$$
If we let $v=h^{-1}u$, we obtain that

$$||(d\log h)v||_p\leq C||dv||_p,\qquad\forall u\in C_0^\infty(M).$$
Since $V\not\equiv0$, $h$ is not constant, and thus $d\log h\not\equiv 0$. Thus, \eqref{Hardy} holds with a non-zero $\rho=|\nabla \log h|^p$, and it follows that $M$ is $p-$hyperbolic.\cqfd

\noindent \textit{Proof of Corollary \ref{main3}:} if \eqref{S} holds, then for $\mathcal{V}_-\in L^{\frac{n}{2}}$, strong subcriticality is equivalent to subcriticality of $\Delta+\mathcal{V}$ (see the Preliminaries). Thus, there exists $\varepsilon>0$ such that 

$$\varepsilon\int_M|\nabla u|^2\leq \int_M|\nabla u|^2+\mathcal{V}u^2,\qquad \forall u\in C_0^\infty(M).$$
Hence, $\Delta+\mathcal{V}$ satisfies the Sobolev inequality

$$||u||_{\frac{2n}{n-2}}\leq C\int_M|\nabla u|^2+\mathcal{V}u^2,\qquad \forall u\in C_0^\infty(M).$$
By Example \ref{Sob_Kato}, a potential $\mathcal{V}$ belonging to $L^{\frac{n}{2}\pm\varepsilon}$ is in $K^\infty(M)$, therefore by Theorem \ref{heat_kernel}, $e^{-t(\Delta+\mathcal{V})}$ has Gaussian estimates, and thus is bounded uniformly on $L^1$. According to \cite[Theorem 2.7]{CSCV}, the Sobolev inequality for $\Delta+\mathcal{V}$ and the fact that $e^{-t(\Delta+\mathcal{V})}$ is bounded uniformly on $L^1$ implies that for every $p\in (1,n)$, $(\Delta+\mathcal{V})^{-1/2}$ is bounded from $L^p$ to $L^q$, $\frac{1}{q}=\frac{1}{p}-\frac{1}{n}$. From the result of Theorem \ref{main1}, $d(\Delta+\mathcal{V})^{-1/2}$ is bounded on $L^p$ if and only if $(d h)(\Delta+\mathcal{V})^{-1/2}$ is bounded on $L^p$. Thus, we see that in order to conclude the proof, it is enough to show that $d h\in L^n$. This will be proved in Lemma \ref{h_prop}.

\cqfd

\noindent \textit{Proof of Theorem \ref{main4}:} by Corollary \ref{main2}, $d(\Delta+\mathcal{V})^{-1/2}$ is unbounded on $L^p$ if $M$ is $p$-parabolic. By Theorem \ref{capacity}, the hypothesis on capacities implies that for every $p<\kappa$, the following reverse volume estimate holds:

\begin{equation}\label{reverse}
r^p\lesssim \frac{V(x,r)}{V(x,1)},\qquad \forall x\in M,\,\forall r>0.
\end{equation}
Since $\mathcal{V}\in L^{\frac{\nu'}{2}-\varepsilon}\left(M,\frac{d\mu(x)}{V(x,1)}\right)\cap L^{\frac{\nu}{2}+\varepsilon}\left(M,\frac{d\mu(x)}{V(x,1)}\right)$, by interpolation and Lemma \ref{para_exp}, one obtains that $\mathcal{V}\in L^{\frac{p}{2}}\left(M,\frac{d\mu(x)}{V(x,1)}\right)$, for every $p<\kappa$, close enough to $\kappa$. Therefore, one sees that for $p<\kappa$, close enough to $\kappa$,

$$\int_1^\infty  \left|\left|\frac{|\mathcal{V}|^{1/2}}{V(\cdot,t)^{1/p}}\right|\right|_p\,dt<\infty.$$
Also, by Proposition \ref{cond_Kato}, $\mathcal{V}\in K^\infty(M)$. Thus, by Theorem \ref{AO}, the Riesz transform $d(\Delta+\mathcal{V})^{-1/2}$ is bounded on $L^p$ for all $1<p<\min(\kappa,p_0)$. This concludes the proof of Theorem \ref{main4}.

\cqfd
To prove Lemma \ref{dL}, we will need the following result concerning the positive function $h$, solution of $(\Delta+\mathcal{V})u=0$, whose existence is provided by Theorem \ref{pos_sol}.

\begin{Lem}\label{h_prop}

The function $h$ satisfies the following properties:

\begin{enumerate}

\item There exists $q\in [1,\infty)$ such that $h-1\in L^q\left(M,\frac{d\mu(x)}{V(x,1)}\right)$.

\item Under assumption (i),  $dh\in L^\infty$.

\item Under assumption (ii), $dh\in L_V^r$, for $r< p_0$, close enough to $p_0$.

\item Under the assumptions of Corollary \ref{main3}, $dh\in L^n$.

\end{enumerate}

\end{Lem}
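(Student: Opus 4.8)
The starting point is the integral representation of $h$ furnished by Theorem \ref{pos_sol}: since $h\sim 1$ solves $(\Delta+\mathcal{V})u=0$, we have
$$
h(x)=1-\int_M G(x,y)\mathcal{V}(y)h(y)\,dy = 1 - \Delta^{-1}(\mathcal{V}h)(x),
$$
so that $h-1 = -\Delta^{-1}(\mathcal V h)$ and, since $h\sim 1$, it suffices to control $\Delta^{-1}(|\mathcal{V}|)$ and $d\Delta^{-1}(|\mathcal{V}|)$ in the relevant norms, where $|\mathcal V|\le C(\mathcal V_++\mathcal V_-)$ inherits the integrability hypotheses of Theorem \ref{main1}. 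Part (1) is then immediate from Proposition \ref{cond_Kato}: the hypothesis $\mathcal{V}\in L_V^{\frac{\nu'}{2}-\varepsilon}\cap L_V^{\frac{\nu}{2}+\varepsilon}$ (valid under both (i) and (ii), after interpolation since $q_1$ and $q_2$ lie between $\frac{\nu'}{2}$ and $\frac{\nu}{2}$ for $\varepsilon$ small — this needs a line of bookkeeping using $\nu'\le\nu$) gives $\Delta^{-1}|\mathcal V|\in L^q_V$ for some $q<\infty$, hence $h-1\in L^q_V$.

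For parts (2)--(3), write $dh = -d\Delta^{-1}(\mathcal V h) = -(d\Delta^{-1/2})\Delta^{-1/2}(\mathcal V h)$ and split $\Delta^{-1/2}=R+S=\int_0^1+\int_1^\infty e^{-t\Delta}\frac{dt}{\sqrt t}$ as in Lemma \ref{Delta_hyp}. For the short-time piece $R$ one uses that $e^{-t\Delta}$ is uniformly bounded (analytic) on $L^r$ together with the condition $(L_p)$ on $\mathcal{V}$ (and the boundedness of the Riesz transform on $L^r$, $r<p_0$) to get a bounded contribution; here the role of $(L_p)$ with $p>N$ under (i) is precisely to make the local part of $\mathcal V h$ lie in $L^\infty$ after one application of $\Delta^{-1}$, via the Gagliardo--Nirenberg/mapping estimates of the Preliminaries, exactly as in Example \ref{Sob_Kato}. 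For the long-time piece $S$, one runs the weighted heat-kernel bounds: by Corollary \ref{grad_weight}, $\sqrt t\,\|\nabla e^{-t\Delta}\|_{L^p_V,L^r_V}\lesssim \varphi_{p,r}(t)$, so
$$
\|d\Delta^{-1}(\mathcal V h)\|_{L^r_V}\lesssim \int_1^\infty \varphi_{p,r}(t)\,\frac{dt}{\sqrt t}\,\|\mathcal V h\|_{L^p_V}+(\text{short-time}),
$$
and one checks the exponent $-\frac{\nu'}{2p}+\frac{\nu}{2r}-\frac12<-1$ at $t\ge 1$ is achieved for the stated ranges of exponents (this is where the precise form of $q_1,q_2$ in hypothesis (ii), and the relation $\nu'\le\kappa\le\nu$ from Lemma \ref{para_exp}, enter). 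Under (i), where $p_0\le\nu$ and we want $dh\in L^\infty=L^\infty_V$, one takes $r=\infty$ in the same computation, using $\|\mathcal V h\|_{L^{\nu/2+\varepsilon}_V\cap L^{\nu'/2-\varepsilon}_V}<\infty$, so that both the $t\le1$ and $t\ge1$ integrals of $\varphi_{p,\infty}(t)t^{-1/2}$ converge; the Ricci lower bound and $(L_p)$, $p>N$, handle the genuinely local estimate near the diagonal for small $t$.

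For part (4), under the hypotheses of Corollary \ref{main3} the manifold satisfies the global Sobolev inequality $\eqref{S}$ and has $V(x,r)\sim r^n$, so $L^p_V=L^p$ and the argument simplifies: $\mathcal V\in L^{n/2\pm\varepsilon}$, hence by $\eqref{mapping}$ and interpolation $\Delta^{-1}(\mathcal V h)\in L^{q_0}$ for a suitable $q_0$ and, applying $\eqref{mapping}$ to $d\Delta^{-1/2}\cdot\Delta^{-1/2}$ together with the boundedness of $d\Delta^{-1/2}$ on $L^p$ for $p<n+\varepsilon$, one lands $dh$ in $L^n$ exactly (the gain of one derivative moves $n/2$ to $n$). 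The main obstacle I anticipate is purely bookkeeping: verifying that the numerology of the exponents $q_1,q_2$ in hypothesis (ii) of Theorem \ref{main1} is exactly what is needed to make the weighted integral $\int_1^\infty \varphi_{q,r}(t)\,t^{-1/2}\,dt$ converge for some $r<p_0$ close to $p_0$ — i.e. that the definitions of $q_1,q_2$ were reverse-engineered from precisely this convergence requirement — and handling the borderline/local estimates (the short-time piece and the $(L_p)$ condition) with enough care that one truly gets $dh\in L^\infty$ rather than merely $dh\in L^r_V$ for finite $r$ under assumption (i).
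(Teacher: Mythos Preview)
Your treatment of parts (1), (3) and (4) is essentially the paper's argument: the integral equation $1-h=\Delta^{-1}(\mathcal V h)$ together with Proposition~\ref{cond_Kato} for (1), the weighted gradient bound of Corollary~\ref{grad_weight} split into short- and long-time for (3) (and you are right that the exponents $q_1,q_2$ in hypothesis (ii) are reverse-engineered exactly from the convergence conditions $s_2<\tfrac{\nu'p_0}{p_0+\nu}$, $s_1>\tfrac{\nu p_0}{p_0+\nu'}$), and the mapping properties \eqref{mapping} under \eqref{S} for (4).

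Part (2), however, has a genuine gap. Your plan is to run the same heat-kernel machinery with target $r=\infty$, but this fails for two reasons. First, Corollary~\ref{grad_weight} (and the underlying Proposition~\ref{gvEv}) is stated only for finite $q$, since its hypothesis is $\sup_t\sqrt{t}\,\|\nabla e^{-t\Delta}\|_{q,q}<\infty$ for some $q\in(1,\infty)$; there is no $L^\infty$ endpoint available. Second, even formally, the short-time integral $\int_0^1\varphi_{s_1,\infty}(t)\,t^{-1/2}\,dt$ converges only if $s_1>\nu$, whereas under (i) you only have $\mathcal V\in L_V^{\nu/2+\varepsilon}$. Your fallback, invoking Gagliardo--Nirenberg ``as in Example~\ref{Sob_Kato}'', does not help either: those estimates rest on the global Sobolev inequality \eqref{S}, which is \emph{not} assumed under (i).

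The paper obtains $dh\in L^\infty$ by a completely different, pointwise route that actually uses the two extra hypotheses in (i) that your sketch leaves idle. The Ricci lower bound gives the Cheng--Yau gradient estimate for the Green function: $|\nabla_x\log G(x,y)|\le C$ for $d(x,y)\ge 1$ and $|\nabla_x\log G(x,y)|\le C\,d(x,y)^{-1}$ for $d(x,y)<1$. Combining this with the bound $G(x,y)\lesssim d(x,y)^{2-N}/V(x,1)$ near the diagonal (from \eqref{UE}, \eqref{exp_D} with $\nu'>2$, and Bishop--Gromov), one gets $\|\nabla_x G(x,\cdot)\|_{L^{q}(B(x,1))}\le C_q\,V(x,1)^{-1/q}$ for every $q<\tfrac{N}{N-1}$. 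The condition $(L_p)$ with $p>N$ means $p'<\tfrac{N}{N-1}$, so H\"older gives a uniform bound on $\int_{B(x,1)}|\nabla_x G(x,y)|\,|\mathcal V(y)|\,dy$; the far-away contribution $\int_{M\setminus B(x,1)}$ is controlled by $|\nabla G|\le CG$ and $\mathcal V\in K^\infty(M)$. This is precisely why assumption (i) requires Ricci bounded below \emph{and} $(L_p)$ with $p>N$, not merely the weighted integrability of $\mathcal V$.
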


\begin{proof}

By Theorem \ref{pos_sol}, $h$ satisfies the following equation:

$$1=h+\Delta^{-1}\mathcal{V}h.$$
Therefore, $|1-h|\leq\Delta^{-1}|\mathcal{V}|h\lesssim \Delta^{-1}|\mathcal{V}|$. By Proposition \ref{cond_Kato}, there exists $q\in [1,\infty)$ such that $\Delta^{-1}|\mathcal{V}|\in L^q_V$. Consequently, $1-h\in L^q_V$. Again according to Proposition \ref{cond_Kato}, $\mathcal{V}$ is in $K^\infty(M)$, and by definition of $K^\infty(M)$, $\Delta^{-1}|\mathcal{V}|\in L^\infty$. 

Let us now assume (i). By the gradient estimate of Cheng-Yau (see e.g. \cite[Theorem 6.1]{Li}), as a consequence of the bound from below of the Ricci curvature, there is a constant $C$ such that for every $x\in M$,

\begin{equation}\label{H1}
|\nabla \log G(x,y)|\leq C,\qquad \forall y\in M\setminus B(x,1),
\end{equation}
and,

\begin{equation}\label{H2}
|\nabla \log G(x,y)|\leq Cd(x,y)^{-1},\qquad\forall y\in B(x,1)\setminus\{x\}.
\end{equation}
As a consequence of \eqref{D} and \eqref{UE}, there holds:

$$G(x,y)\leq \int_{d(x,y)}^\infty \frac{rdr}{V(x,r)}$$
(see e.g. \cite[Exercise 15.8]{Grig}). Using \eqref{exp_D} and the fact that $\nu'>2$, we see that there is constant $C$ such that for all $x\in M$,

$$\int_1^\infty \frac{rdr}{V(x,r)}\leq \frac{C}{V(x,1)}.$$
Also, by Bishop-Gromov and our assumption on Ricci, for every $x\in M$ and $r\leq 1$,

\begin{equation}\label{BG}
\frac{V(x,r)}{V(x,1)}\geq Cr^N,\,\,V(x,r)\leq Cr^N,
\end{equation}
where $N$ is the dimension of $M$. Therefore, for all $(x,y)\in M^2$ such that $d(x,y)\leq 1$,

$$G(x,y)\leq C\frac{d(x,y)^{2-N}}{V(x,1)}+\frac{C}{V(x,1)}\leq C\frac{d(x,y)^{2-N}}{V(x,1)}.$$
This implies, using \eqref{H2} and \eqref{BG}, that for all $q<\frac{N}{N-1}$, there exists some constant $C_q$ so that for every $x\in M$,

\begin{equation}\label{grad_q}
||\nabla G(x,\cdot)||_{L^q(B(x,1))}\leq C_qV(x_0,1)^{-\frac{1}{q}}.
\end{equation}
Using the hypothesis that $\mathcal{V}$ satisfies \eqref{L_p} and \eqref{grad_q} for $q=p'<\frac{N}{N-1}$, we get that there is a constant $C$ such that for every $x\in M$,

\begin{equation}\label{dG_local}
\int_{B(x,1)}|\nabla G(x,y)|\,|\mathcal{V}(y)|dy\leq C.
\end{equation}
Also, by \eqref{H1} and the fact that $\mathcal{V}$ is in $K^\infty(M)$, 

\begin{equation}\label{dG_glob}
\sup_{x\in M}\int_{M\setminus B(x,1)}|\nabla G(x,y)|\,|\mathcal{V}(y)|dy<\infty.
\end{equation}
Combining \eqref{dG_local} and \eqref{dG_glob}, one obtains that $dh\leq d\Delta^{-1}\mathcal{V}h\in L^\infty$. 

Let us now assume (ii). We write

$$||dh||_{L_V^r}\leq \int_0^1 ||\nabla e^{-t\Delta}||_{L_V^{s_1},L_V^r}||\mathcal{V}||_{L_V^{s_1}}\,dt+\int_1^\infty ||\nabla e^{-t\Delta}||_{L_V^{s_2},L_V^r}||\mathcal{V}||_{L_V^{s_2}}\,dt.$$
For $r< p_0$, using Corollary \ref{grad_weight}, one finds

$$||dh||_{L_V^r}\leq ||\mathcal{V}||_{L_V^{s_1}}\int_0^1 \varphi_{s_1,r}(t)\,\frac{dt}{\sqrt{t}}+||\mathcal{V}||_{L_V^{s_2}}\int_1^\infty  \varphi_{s_2,r}(t)\,\frac{dt}{\sqrt{t}}.$$
By definition of $\varphi_{p,q}$, the two integrals converge if and only if

$$-\frac{\nu}{2s_1}+\frac{\nu'}{2r}>-\frac{1}{2},\,-\frac{\nu'}{2s_2}+\frac{\nu}{2r}<-\frac{1}{2}.$$
Since $r$ is arbitrarily close to $p_0$, it is enough to have these two inequalities satisfied for $r=p_0$, i.e.

$$-\frac{\nu}{s_1}+\frac{\nu'}{p_0}>-1,\,-\frac{\nu'}{s_2}+\frac{\nu}{p_0}<-1.$$
This is equivalent to

$$s_2<\frac{\nu' p_0}{p_0+\nu},\,s_1>\frac{\nu p_0}{p_0+\nu'}.$$
We now choose $s_2=\frac{\nu' p_0}{p_0+\nu}-\varepsilon$, and $s_1=\frac{\nu p_0}{p_0+\nu'}+\varepsilon$. By hypothesis, $\mathcal{V}\in L_V^{s_1}\cap L_V^{s_2}$, and the result is proved.

\end{proof}

\noindent \textit{Proof of Lemma \ref{dL}:} for $p=2$, by a direct consequence of the Green formula, $d\Delta_{h^2}^{-1/2}$ is an isometry on $L^2(M,h^2\,d\mu)\simeq L^2(M,\,d\mu)$. For $p>2$, we want to apply Proposition \ref{C-D}, with $A=h^2$. Indeed, assuming that the hypotheses of Proposition \ref{C-D} are satisfied for the choice $A=h^2$, we obtain that $d\mathcal{L}^{-1/2}=d\Delta_{h^2}^{-1/2}$ is bounded on $L^p$ for every $p\in (2,p_0)$, hence the result of Lemma \ref{dL}. It thus remains to check that the hypotheses of Proposition \ref{C-D} are fulfilled. We check this in the following sequence of lemmas.

%the following perturbation result of T. Coulhon and N. Dungey:

%\begin{Thm}\cite[Theorem 4.1]{CD}\label{CD}

%Assume that $\mu$ is a positive function such that $\mu\sim 1$ and $1-\mu\in L^q$ for some $q\in [1,\infty)$, that $d\Delta^{-1/2}$ and $d(\Delta_\mu+1)^{-1/2}$ are bounded on $L^p$ for $p\in (2,p_0)$. Assume also that for some $\alpha>0$, 

%$$||e^{-t\Delta}||_{1,\infty}\leq Ct^{-\alpha}\qquad \forall t>1,$$
%and that $d\Delta_\mu^{-1/2}$ is bounded on $L^p$ for every $p\in (p_0',2)$. Then $d\Delta_\mu^{-1/2}$ is bounded on $L^p$ for every $p\in (2,p_0)$.

%\end{Thm}

\begin{Lem}\label{1<p<2}
Under the assumptions of Theorem \ref{main1}, the Riesz transform associated to the weighted Laplacian $d\Delta_{h^2}^{-1/2}$ is bounded on $L^p$ for every $1<p\leq 2$.

\end{Lem}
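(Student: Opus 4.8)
The proof will be short: the plan is to reduce the statement, via the $h$-transform, to the classical Coulhon--Duong theorem \cite{CDu} on $L^p$-boundedness of Riesz transforms in the range $1<p\leq 2$, applied to the weighted Laplacian $\Delta_{h^2}=P_h$ on the weighted manifold $(M,h^2\mu)$. The crucial elementary observation is that $h\sim 1$, so that the measure $h^2\,d\mu$ is comparable to $d\mu$; consequently $(M,h^2\mu)$ inherits the doubling property \eqref{D} from $(M,\mu)$ with comparable volumes of balls, and for every $p\in[1,\infty]$ the spaces $L^p(M,h^2\,d\mu)$ and $L^p(M,d\mu)$ coincide with equivalent norms. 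Hence it is enough to prove that $d\Delta_{h^2}^{-1/2}$ is bounded on $L^p(M,h^2\,d\mu)$ for $1<p\leq 2$.

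First I would record that the heat semigroup of $P_h=\Delta_{h^2}$ satisfies the Gaussian upper estimate \eqref{UE} relative to $(M,h^2\mu)$. This is in fact already contained in the proof of Theorem \ref{heat_kernel}: under the assumptions of Theorem \ref{main1} the potential $\mathcal{V}$ is subcritical and, by Proposition \ref{cond_Kato}, lies in $K^\infty(M)$, so the hypotheses of Theorem \ref{heat_kernel} hold; the relative Faber--Krahn inequality \eqref{RFK} for $(M,\mu)$ together with $h\sim 1$ gives \eqref{RFK} for $(M,h^2\mu)$, and \cite{G4} then yields \eqref{UE} for $e^{-tP_h}$. (Equivalently, one uses $p_t^h(x,y)=p_t^{\mathcal V}(x,y)/(h(x)h(y))$ and the Gaussian bound on $p_t^{\mathcal V}$ provided by Theorem \ref{heat_kernel}.)

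With \eqref{D} and \eqref{UE} in hand for $\Delta_{h^2}$, the Coulhon--Duong theorem \cite{CDu} applies and gives that the sublinear operator $f\mapsto\lvert d\Delta_{h^2}^{-1/2}f\rvert$ is of weak type $(1,1)$ on $(M,h^2\mu)$. At the endpoint $p=2$, the Green formula for the weighted Laplacian shows that $d\Delta_{h^2}^{-1/2}$ is an isometry on $L^2(M,h^2\,d\mu)$: since $\lVert du\rVert_{L^2(h^2\mu)}^2=\langle \Delta_{h^2}u,u\rangle_{h^2\mu}$, taking $u=\Delta_{h^2}^{-1/2}f$ gives $\lVert d\Delta_{h^2}^{-1/2}f\rVert_{L^2(h^2\mu)}=\lVert f\rVert_{L^2(h^2\mu)}$. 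Marcinkiewicz interpolation between the weak $(1,1)$ estimate and the $L^2$ estimate then yields boundedness on $L^p(M,h^2\,d\mu)$ for all $1<p\leq 2$, which by the first paragraph is exactly the claimed boundedness on $L^p(M,d\mu)$.

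I do not expect a genuine obstacle here: the only input that is not completely formal is the Gaussian upper bound for $e^{-t\Delta_{h^2}}$, and that has already been produced in the proof of Theorem \ref{heat_kernel}. If one preferred a self-contained argument avoiding the citation of \cite{CDu}, the main effort would shift to reproving the standard weak $(1,1)$ bound for the Riesz transform under \eqref{D} and \eqref{UE} --- via a Calder\'on--Zygmund decomposition adapted to the doubling measure $h^2\mu$ and the $L^2$ off-diagonal (Davies--Gaffney) decay of $d\,e^{-t\Delta_{h^2}}$ --- but this is well known and need not be repeated.
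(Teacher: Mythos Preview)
Your proposal is correct and follows essentially the same approach as the paper: use $h\sim 1$ to transfer \eqref{D} and (via \eqref{RFK} and \cite{G4}) \eqref{UE} to the weighted space $(M,h^2\mu)$, then invoke Coulhon--Duong \cite{CDu}; the $L^2$ endpoint is handled by the Green formula. The only cosmetic difference is that you spell out the weak $(1,1)$ plus Marcinkiewicz interpolation, whereas the paper simply cites \cite[Theorem 1.1]{CDu}, which already delivers the $L^p$ boundedness for $1<p\leq 2$ directly.
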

\begin{proof}
The boundedness on $L^2(M,\,d\mu)\simeq L^2(M, \,h^2d\mu)$ follows from the fact that by the Green formula, $d \Delta_{h^2}^{-1/2}$ is an isometry on $L^2(\Omega,\, h^2d\mu)$. Since $h\sim 1$, the relative Faber-Krahn inequality \eqref{RFK} is satisfied for the weighted Laplacian $\Delta_{h^2}$, and consequently the heat kernel $e^{-t\Delta_{h^2}}$ has Gaussian upper-estimates. Also, the measure $h^2d\mu$ is doubling since $d\mu$ is. By \cite[Theorem 1.1] {CDu}, the Riesz transform $d \Delta_{h^2}^{-1/2}$ is bounded on $L^p$ for every $1<p\leq 2$.

\end{proof}

\begin{Lem}\label{local}

Under assumptions {\em (i)} or {\em (ii)}, the local Riesz transform $d(\Delta_{h^2}+1)^{-1/2}$ is bounded on $L^p$, for all $p\in (2,p_0)$.

\end{Lem}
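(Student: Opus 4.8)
**Proof plan for Lemma \ref{local} (boundedness of the local Riesz transform $d(\Delta_{h^2}+1)^{-1/2}$ on $L^p$, $p \in (2,p_0)$).**

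The plan is to compare the local Riesz transform of the weighted Laplacian $\Delta_{h^2}$ with that of the unperturbed Laplacian $\Delta$, whose local Riesz transform $d(\Delta+1)^{-1/2}$ is bounded on $L^p$ for all $p \in (1,p_0)$ (this follows from the boundedness of $d\Delta^{-1/2}$ on $L^p$ together with the boundedness of $(\Delta+1)^{-1/2}\Delta^{1/2}$, or equivalently from analyticity of the semigroup and the subordination formula; it is standard under \eqref{D} and \eqref{UE}). First I would write $\Delta_{h^2} = h^{-2}\operatorname{div}(h^2 \nabla \cdot) = \Delta - 2\nabla\log h\cdot\nabla$, so that $\Delta_{h^2} = \Delta + b\cdot\nabla$ with the drift $b = -2\nabla\log h = -2\, dh/h$; since $h\sim 1$, we have $|b| \lesssim |dh|$, which by Lemma \ref{h_prop} lies in $L^\infty$ under assumption (i) and in $L^r_V$ for $r<p_0$ close to $p_0$ under assumption (ii).

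The key step is a resolvent perturbation identity: setting $R_h = (\Delta_{h^2}+1)^{-1/2}$ and $R = (\Delta+1)^{-1/2}$, one uses the standard formula
$$
d(\Delta_{h^2}+1)^{-1/2} = d(\Delta+1)^{-1/2} - d(\Delta+1)^{-1/2}\big[(\Delta_{h^2}+1)^{1/2} - (\Delta+1)^{1/2}\big](\Delta_{h^2}+1)^{-1/2},
$$
or, more conveniently, one works with the first-order difference at the level of $(\Delta_{h^2}+1)^{-1}$ via
$$
(\Delta_{h^2}+1)^{-1} = (\Delta+1)^{-1} - (\Delta+1)^{-1}(b\cdot\nabla)(\Delta_{h^2}+1)^{-1},
$$
and then passes to the square root using the subordination formula $(\Delta_{h^2}+1)^{-1/2} = c\int_0^\infty (\Delta_{h^2}+1+s)^{-1}\,s^{-1/2}\,ds$. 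The estimate needed is of the form $\|b\cdot\nabla (\Delta_{h^2}+1+s)^{-1}\|_{p,p} \lesssim (1+s)^{-1/2-\varepsilon}$ for some $\varepsilon>0$ and all $p\in(2,p_0)$ — this is exactly the type of estimate proved in Proposition \ref{C-D}'s proof. Under assumption (i) where $b\in L^\infty$, this is immediate from the boundedness of $\nabla(\Delta_{h^2}+1)^{-1}$ (which we would bootstrap from Lemma \ref{1<p<2}, interpolation, and the Gaussian bounds for $e^{-t\Delta_{h^2}}$). Under assumption (ii), I would mimic the weighted-space argument from the proof of Proposition \ref{C-D}: write $(\Delta+1)^{-1} = \int_0^\infty e^{-s}e^{-s\Delta}\,ds$, apply Hölder with the splitting $\frac1p = \frac1r + \frac{1}{r'}$, use $\|bV(\cdot,1)^{-1/r}\|_{r} = \|b\|_{L^r_V}<\infty$, and invoke Proposition \ref{gvEv} / Corollary \ref{grad_weight} to control $\|V(\cdot,1)^{1/r}\nabla e^{-t\Delta}\|_{p,p}$ by a time-integrable power of $t$ (here the constraint $r<p_0$, so that the Riesz transform is bounded on $L^r$ and hence $\sqrt{t}\|\nabla e^{-t\Delta}\|_{r,r}$ is bounded, is what is used). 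Summing the Neumann series / iterating the resolvent identity then gives the boundedness of $d(\Delta_{h^2}+1)^{-1/2}$ on $L^p$ for $p\in(2,p_0)$.

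The main obstacle I anticipate is handling the local (i.e. $+1$) Riesz transform cleanly: near $s=0$ (or small times) the factor $(1+s)^{-1/2}$ provides no decay, so the gain $\varepsilon>0$ must come entirely from the volume weight — that is, from $\varepsilon_1 = \nu'/(2q)$ or $\varepsilon_2 = \nu/(2q)$ in the notation of Proposition \ref{C-D}, which requires choosing the integrability exponent of $b$ (equivalently of $|dh|$) large enough. This forces a careful tracking of which $L^r_V$ memberships of $dh$ are available from Lemma \ref{h_prop}(3) and an interpolation to push $r$ close to $p_0$; the bookkeeping is the delicate part, but it is the same bookkeeping already carried out in the proof of Proposition \ref{C-D}, so the argument is a direct adaptation rather than a genuinely new difficulty.
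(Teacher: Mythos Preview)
Your plan has a circularity problem. Viewing $\Delta_{h^2}=\Delta+b\cdot\nabla$ as a drift perturbation and comparing the local resolvents, every version of the resolvent/Duhamel identity leaves a factor $\nabla(\Delta_{h^2}+1+s)^{-1}$ on the right, acting on $L^p$ with $p>2$ --- which is precisely the object you are trying to bound. Iterating to a Neumann series $\sum_k[-(b\cdot\nabla)(\Delta+1+s)^{-1}]^k$ does not help: one would need $\|(b\cdot\nabla)(\Delta+1+s)^{-1}\|_{p,p}<1$ for \emph{all} $s\ge 0$, but under (i) this norm is only $\lesssim\|b\|_\infty(1+s)^{-1/2}$, and under (ii) the weighted estimate from Proposition~\ref{C-D}'s proof has the same structure; neither is small for bounded $s$. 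Your ``bootstrap from Lemma~\ref{1<p<2}, interpolation, and Gaussian bounds'' cannot supply the missing endpoint: Gaussian upper bounds alone never yield $\sup_{t>0}\sqrt{t}\|\nabla e^{-t\Delta_{h^2}}\|_{p,p}<\infty$ for $p>2$ (this is the entire difficulty of Riesz transforms above $2$), and there is nothing above $p=2$ to interpolate against. Note also that Proposition~\ref{C-D} \emph{assumes} $\nabla(I+H)^{-1/2}$ bounded on $L^p$, $p\in(2,p_0)$; Lemma~\ref{local} is exactly what supplies that hypothesis, so appealing to the Coulhon--Dungey mechanism here is backwards.

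The paper avoids this by conjugating with $h$ rather than perturbing by a drift: since $h^{-1}(\Delta+\mathcal V+1)h=\Delta_{h^2}+1$ and $h\sim 1$, boundedness of $d(\Delta_{h^2}+1)^{-1/2}$ on $L^p$ is equivalent to that of $D(\Delta+\mathcal V+1)^{-1/2}$ with $D=d\circ h^{-1}$, and the product rule splits this into $h^{-1}\,d(\Delta+\mathcal V+1)^{-1/2}$ and $(dh)(\Delta+\mathcal V+1)^{-1/2}$. The first piece factors as $(d\Delta^{-1/2})(\Delta^{1/2}(\Delta+1)^{-1/2})((\Delta+1)^{1/2}(\Delta+\mathcal V+1)^{-1/2})$, and the last factor is bounded on $L^p$ by an Assaad--Ouhabaz type argument that uses only the Gaussian bounds for $e^{-t\Delta}$ and $e^{-t(\Delta+\mathcal V)}$ (Theorem~\ref{heat_kernel}) together with the integrability of $|\mathcal V|^{1/2}$; because $\mathcal V$ is a \emph{zeroth-order} perturbation, no gradient estimate on the perturbed semigroup enters at any point. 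The second piece is controlled by $dh\in L^\infty$ (case~(i)) or $dh\in L^r_V$ (case~(ii)) from Lemma~\ref{h_prop}, combined with the $L^p\to L^q$ mapping of $e^{-t(\Delta+\mathcal V)}$ from Corollary~\ref{heat_weight}. The idea you are missing is precisely this $h$-transform step, which trades the first-order drift $b\cdot\nabla$ for the zeroth-order potential $\mathcal V$ and thereby breaks the circularity.
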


\begin{proof}
Notice that $d(\Delta_{h^2}+1)^{-1/2}$ is bounded on $L^2$. By interpolation, it is thus enough to prove the boundedness of $d(\Delta_{h^2}+1)^{-1/2}$ on $L^p$ for $p$ close enough to $p_0$. Thus, for the rest of the proof, we assume that $p<p_0$ is close enough to $p_0$. Conjugating by $h$ and using $h\sim 1$, we easily see that the boundedness of $d(\Delta_{h^2}+1)^{-1/2}$ on $L^p$ is equivalent to 

$$D(\Delta+\mathcal{V}+1)^{-1/2} : L^p\rightarrow L^p,$$
where we recall that $D$ is the differential operator of degree one defined by

$$D=d\circ h^{-1}.$$
Also,

\begin{equation}\label{decomp}
D(\Delta+\mathcal{V}+1)^{-1/2}=d(h^{-1})(\Delta+\mathcal{V}+1)^{-1/2}+h^{-1}d(\Delta+\mathcal{V}+1)^{-1/2}.
\end{equation}
It is thus enough to prove that $(dh)(\Delta+\mathcal{V}+1)^{-1/2}$ and $d(\Delta+\mathcal{V}+1)^{-1/2}$ are bounded on $L^p$. We start with $d(\Delta+\mathcal{V}+1)^{-1/2}$. By a straightforward adaptation of the proof of \cite[Theorem 3.9]{AO}, if $\mathcal{V}$ satisfies

\begin{equation}\label{cond_AO}
\int_0^1\left|\left|\frac{|\mathcal{V}|^{1/2}}{V(\cdot,\sqrt{t})^{\frac{1}{r_1}}}\right|\right|_{r_1}\,\frac{dt}{\sqrt{t}}+\int_1^\infty\left|\left|\frac{|\mathcal{V}|^{1/2}}{V(\cdot,\sqrt{t})^{\frac{1}{r_2}}}\right|\right|_{r_2}e^{-t}\,\frac{dt}{\sqrt{t}}<\infty,
\end{equation}
then $(\Delta+1)^{1/2}(\Delta+\mathcal{V}+1)^{-1/2}$ is bounded on $L^r$, $r\in (1,\min(r_1,r_2))$. The additional $e^{-t}$ in the second integral comes from the fact that one considers $\Delta+1$ and $\Delta+\mathcal{V}+1$ instead of $\Delta$ and $\Delta+\mathcal{V}$ as in \cite{AO}. Using \eqref{exp_D} and the hypothesis on $\mathcal{V}$, one sees that \eqref{cond_AO} is satisfied for $r_1=r_2=\nu+2\varepsilon$ in case (i), and $r_1=r_2=p_0$ in case (ii). Also, by analyticity of $e^{-t\Delta}$ on $L^p$, $\Delta^{-1/2}(\Delta+1)^{-1/2}$ is bounded on $L^p$. Thus, writing

$$d(\Delta+\mathcal{V}+1)^{-1/2}=\left(d\Delta^{-1/2}\right)\left(\Delta^{1/2}(\Delta+1)^{-1/2}\right)\left((\Delta+1)^{1/2}(\Delta+\mathcal{V}+1)^{-1/2}\right),$$
we see that $d(\Delta+\mathcal{V}+1)^{-1/2}$ is bounded on $L^p$ for all $p\in (1,p_0)$. Let us now treat the operator $(dh)(\Delta+\mathcal{V}+1)^{-1/2}$, for which we distinguish two cases.\\

\noindent{\em Case 1: assume that }(i) {\em is satisfied:} let us first notice that $(\Delta+\mathcal{V}+1)^{-1/2}$ is bounded on $L^p$: indeed, 

\begin{equation}\label{I1}
||(\Delta+\mathcal{V}+1)^{-1/2}||_{p,p}\leq C\int_0^\infty e^{-t}||e^{-t(\Delta+\mathcal{V})}||_{p,p}dt,
\end{equation}
furthermore by Theorem \ref{heat_kernel}, $e^{-t(\Delta+\mathcal{V})}$ has Gaussian estimates and so it is uniformly bounded on $L^p$. So, the integral in \eqref{I1} converges, therefore $(\Delta+\mathcal{V}+1)^{-1/2}$ is bounded on $L^p$. By Lemma \ref{h_prop}, $d h\in L^\infty$. Therefore, $(dh)(\Delta+\mathcal{V}+1)^{-1/2}$ is bounded on $L^p$.\\

\noindent{\em Case 2: assume that }(ii) {\em is satisfied:} write

$$(dh)(\Delta+\mathcal{V}+1)^{-1/2}=\int_0^\infty (dh)e^{-t(\Delta+\mathcal{V})}e^{-t}\,\frac{dt}{\sqrt{t}}.$$
Since $e^{-t(\Delta+\mathcal{V})}$ has Gaussian estimates, by Corollary \ref{heat_weight} and H\"{o}lder's inequality, one has

$$||(dh)(\Delta+\mathcal{V}+1)^{-1/2}||_{p,p}\leq ||dh||_{L^r_V} \int_0^\infty \varphi_{p,q}(t)e^{-t}\,\frac{dt}{\sqrt{t}},$$
with $\frac{1}{r}=\frac{1}{p}-\frac{1}{q}$. The integral $\int_0^\infty \varphi_{p,q}(t)e^{-t}\,\frac{dt}{\sqrt{t}}$ converges if and only if 

\begin{equation}\label{CO}
\frac{\nu'}{2q}-\frac{\nu}{2p}>-\frac{1}{2}.
\end{equation}
Since $p_0>\nu$, \eqref{CO} is satisfied if $p$ is close enough to $p_0$, and $q$ is big enough. By Lemma \ref{h_prop}, $dh\in L_V^r$, for $r< p_0$ close enough to $p_0$. Taking $p$ close enough to $p_0$, and $q$ big enough, one can arrange for $r<p_0$ as close as we want to $p_0$, as well as \eqref{CO} satisfied. Therefore, the operator $(dh)(\Delta+\mathcal{V}+1)^{-1/2}$ is bounded on $L^p$.

%: denote $A_0=\Delta+1$ and $A=\Delta+\mathcal{V}+1$, then

%\begin{equation}\label{I2}
%\begin{array}{rcl}
%I-A_0^{1/2}A^{-1/2}&=&\int_0^\infty A_0^{1/2}\left((I+tA_0)^{-1}-(I+tA)^{-1}\right)t^{-1/2}dt\\\\
%&=&\int_0^\infty (tA_0)^{1/2}(I+tA_0)^{-1}V(I+tA)^{-1}dt\\\\
%\end{array}
%\end{equation}
%Writing $(A+\lambda)^{-\alpha}=\int_0^\infty e^{-t(A+\lambda)}t^{\alpha-1}$, and using that $||e^{-tA}||_{r,r}\leq C e^{-t}$, one gets

%$$||(I+tA)^{-1}||_{r,r}\leq \frac{C}{1+t}$$
%and

%$$||(I+tA_0)^{-1/2}||_{r,r}\leq \frac{C}{(1+t)^{1/2}}.$$
%Furthermore, by analyticity of the heat semi-group,

%$$||(tA_0)^{1/2}(I+tA_0)^{-1/2}||_{r,r}\leq C.$$
%Also, since $V\in L^\infty$ the operator ``multiplication by $V$'' is bounded on $L^r$, and therefore by composition

%$$||(tA_0)^{1/2}(I+tA_0)^{-1}V(I+tA)^{-1}||_{r,r}\leq \frac{C}{(1+t)^{3/2}},$$
%hence the integral \eqref{I2} converges in $||\cdot||_{r,r}$ norm, and we get that $(\Delta+1)^{1/2}(\Delta+\mathcal{V}+1)^{-1/2}$ is bounded on $L^r$, for every $r\in (1,\infty)$. Since by hypothesis the Riesz transform is bounded on $L^{p+\varepsilon}$, we conclude that $d(\Delta+\mathcal{V}+1)^{-1/2}$ is bounded on $L^r$ for every $r\in (2,p+\varepsilon)$.

\end{proof}

\begin{center}{\bf Acknowledgments} \end{center}
It is a pleasure to thank Y. Pinchover for interesting discussions, and G. Carron for comments on the manuscript. While this work was being done, the author was supported by post-doctoral fellowships at the Technion (Haifa, Israel) and at the University of British Columbia (Vancouver, Canada). He thanks both institution for having provided him with excellent research environment.

\end{document}